\chardef\@x10\chardef\@xv60
\def\tcitime{
\def\@time{%
  \@minute\time\@hour\@minute\divide\@hour\@xv
  \ifnum\@hour<\@x 0\fi\the\@hour:%
  \multiply\@hour\@xv\advance\@minute-\@hour
  \ifnum\@minute<\@x 0\fi\the\@minute
  }}%
\def\QCTOpt[#1]#2{%
  \def\QCTOptB{#1}
  \def\QCTOptA{#2}
}
\def\QCTNOpt#1{%
  \def\QCTOptA{#1}
  \let\QCTOptB\empty
}
\def\Qct{%
  \@ifnextchar[{%
    \QCTOpt}{\QCTNOpt}
}
\def\QCBOpt[#1]#2{%
  \def\QCBOptB{#1}
  \def\QCBOptA{#2}
}
\def\QCBNOpt#1{%
  \def\QCBOptA{#1}
  \let\QCBOptB\empty
}
\def\Qcb{%
  \@ifnextchar[{%
    \QCBOpt}{\QCBNOpt}
}
\def\PrepCapArgs{%
  \ifx\QCBOptA\empty
    \ifx\QCTOptA\empty
      {}%
    \else
      \ifx\QCTOptB\empty
        {\QCTOptA}%
      \else
        [\QCTOptB]{\QCTOptA}%
      \fi
    \fi
  \else
    \ifx\QCBOptA\empty
      {}%
    \else
      \ifx\QCBOptB\empty
        {\QCBOptA}%
      \else
        [\QCBOptB]{\QCBOptA}%
      \fi
    \fi
  \fi
}
\def\GRAPHICSPS#1{%
 \ifcase\GRAPHICSTYPE
   \special{ps: #1}%
 \or
   \special{language "PS", include "#1"}%
 \fi
}%
\def\graffile#1#2#3#4{%
    \leavevmode
    \raise -#4 \BOXTHEFRAME{%
        \hbox to #2{\raise #3\hbox to #2{\null #1\hfil}}}%
}%
\def\draftbox#1#2#3#4{%
 \leavevmode\raise -#4 \hbox{%
  \frame{\rlap{\protect\tiny #1}\hbox to #2%
   {\vrule height#3 width\z@ depth\z@\hfil}%
  }%
 }%
}%
\newif\ifwasdraft
\def\GRAPHIC#1#2#3#4#5{%
 \ifnum\draft=\@ne\draftbox{#2}{#3}{#4}{#5}%
  \else\graffile{#1}{#3}{#4}{#5}%
  \fi
 }%
\def\addtoLaTeXparams#1{%
    \edef\LaTeXparams{\LaTeXparams #1}}%
\newif\ifBoxFrame \BoxFramefalse
\newif\ifOverFrame \OverFramefalse
\newif\ifUnderFrame \UnderFramefalse
\def\BOXTHEFRAME#1{%
   \hbox{%
      \ifBoxFrame
         \frame{#1}%
      \else
         {#1}%
      \fi
   }%
}
\def\doFRAMEparams#1{\BoxFramefalse\OverFramefalse\UnderFramefalse\readFRAMEparams#1\end}%
\def\readFRAMEparams#1{%
 \ifx#1\end%
  \let\next=\relax
  \else
  \ifx#1i\dispkind=\z@\fi
  \ifx#1d\dispkind=\@ne\fi
  \ifx#1f\dispkind=\tw@\fi
  \ifx#1t\addtoLaTeXparams{t}\fi
  \ifx#1b\addtoLaTeXparams{b}\fi
  \ifx#1p\addtoLaTeXparams{p}\fi
  \ifx#1h\addtoLaTeXparams{h}\fi
  \ifx#1X\BoxFrametrue\fi
  \ifx#1O\OverFrametrue\fi
  \ifx#1U\UnderFrametrue\fi
  \ifx#1w
    \ifnum\draft=1\wasdrafttrue\else\wasdraftfalse\fi
    \draft=\@ne
  \fi
  \let\next=\readFRAMEparams
  \fi
 \next
 }%
\def\IFRAME#1#2#3#4#5#6{%
      \bgroup
      \let\QCTOptA\empty
      \let\QCTOptB\empty
      \let\QCBOptA\empty
      \let\QCBOptB\empty
      #6%
      \parindent=0pt%
      \leftskip=0pt
      \rightskip=0pt
      \setbox0 = \hbox{\QCBOptA}%
      \@tempdima = #1\relax
      \ifOverFrame
          \typeout{This is not implemented yet}%
          \show\HELP
      \else
         \ifdim\wd0>\@tempdima
            \advance\@tempdima by \@tempdima
            \ifdim\wd0 >\@tempdima
               \textwidth=\@tempdima
               \setbox1 =\vbox{%
                  \noindent\hbox to \@tempdima{\hfill\GRAPHIC{#5}{#4}{#1}{#2}{#3}\hfill}\\%
                  \noindent\hbox to \@tempdima{\parbox[b]{\@tempdima}{\QCBOptA}}%
               }%
               \wd1=\@tempdima
            \else
               \textwidth=\wd0
               \setbox1 =\vbox{%
                 \noindent\hbox to \wd0{\hfill\GRAPHIC{#5}{#4}{#1}{#2}{#3}\hfill}\\%
                 \noindent\hbox{\QCBOptA}%
               }%
               \wd1=\wd0
            \fi
         \else
            \ifdim\wd0>0pt
              \hsize=\@tempdima
              \setbox1 =\vbox{%
                \unskip\GRAPHIC{#5}{#4}{#1}{#2}{0pt}%
                \break
                \unskip\hbox to \@tempdima{\hfill \QCBOptA\hfill}%
              }%
              \wd1=\@tempdima
           \else
              \hsize=\@tempdima
              \setbox1 =\vbox{%
                \unskip\GRAPHIC{#5}{#4}{#1}{#2}{0pt}%
              }%
              \wd1=\@tempdima
           \fi
         \fi
         \@tempdimb=\ht1
         \advance\@tempdimb by \dp1
         \advance\@tempdimb by -#2%
         \advance\@tempdimb by #3%
         \leavevmode
         \raise -\@tempdimb \hbox{\box1}%
      \fi
      \egroup%
}%
\def\DFRAME#1#2#3#4#5{%
 \begin{center}
     \let\QCTOptA\empty
     \let\QCTOptB\empty
     \let\QCBOptA\empty
     \let\QCBOptB\empty
     \ifOverFrame 
        #5\QCTOptA\par
     \fi
     \GRAPHIC{#4}{#3}{#1}{#2}{\z@}
     \ifUnderFrame 
        \nobreak\par #5\QCBOptA
     \fi
 \end{center}%
 }%
\def\FFRAME#1#2#3#4#5#6#7{%
 \begin{figure}[#1]%
  \let\QCTOptA\empty
  \let\QCTOptB\empty
  \let\QCBOptA\empty
  \let\QCBOptB\empty
  \ifOverFrame
    #4
    \ifx\QCTOptA\empty
    \else
      \ifx\QCTOptB\empty
        \caption{\QCTOptA}%
      \else
        \caption[\QCTOptB]{\QCTOptA}%
      \fi
    \fi
    \ifUnderFrame\else
      \label{#5}%
    \fi
  \else
    \UnderFrametrue%
  \fi
  \begin{center}\GRAPHIC{#7}{#6}{#2}{#3}{\z@}\end{center}%
  \ifUnderFrame
    #4
    \ifx\QCBOptA\empty
      \caption{}%
    \else
      \ifx\QCBOptB\empty
        \caption{\QCBOptA}%
      \else
        \caption[\QCBOptB]{\QCBOptA}%
      \fi
    \fi
    \label{#5}%
  \fi
  \end{figure}%
 }%
\def\makeactives{
  \catcode`\"=\active
  \catcode`\;=\active
  \catcode`\:=\active
  \catcode`\'=\active
  \catcode`\~=\active
}
   \gdef\activesoff{%
      \def"{\string"}
      \def;{\string;}
      \def:{\string:}
      \def'{\string'}
      \def~{\string~}
    }
\def\FRAME#1#2#3#4#5#6#7#8{%
 \bgroup
 \@ifundefined{bbl@deactivate}{}{\activesoff}
 \ifnum\draft=\@ne
   \wasdrafttrue
 \else
   \wasdraftfalse%
 \fi
 \def\LaTeXparams{}%
 \dispkind=\z@
 \def\LaTeXparams{}%
 \doFRAMEparams{#1}%
 \ifnum\dispkind=\z@\IFRAME{#2}{#3}{#4}{#7}{#8}{#5}\else
  \ifnum\dispkind=\@ne\DFRAME{#2}{#3}{#7}{#8}{#5}\else
   \ifnum\dispkind=\tw@
    \edef\@tempa{\noexpand\FFRAME{\LaTeXparams}}%
    \@tempa{#2}{#3}{#5}{#6}{#7}{#8}%
    \fi
   \fi
  \fi
  \ifwasdraft\draft=1\else\draft=0\fi{}%
  \egroup
 }%
\def\TEXUX#1{"texux"}
\long\def\QQQ#1#2{%
     \long\expandafter\def\csname#1\endcsname{#2}}%
\long\def\QQA#1#2{}%
\def\QTR#1#2{{\csname#1\endcsname #2}}
\def\EXPAND#1[#2]#3{}%
\def\NOEXPAND#1[#2]#3{}%
\def\LaTeXparent#1{}%
\def\ChildStyles#1{}%
\def\ChildDefaults#1{}%
\def\QTagDef#1#2#3{}%
\def\QQfnmark#1{\footnotemark}
\def\makeatletter\input gnuindex.sty\makeatother\makeindex{\makeatletter\input gnuindex.sty\makeatother\makeindex}%
\def\initial#1{\bigbreak{\raggedright\large\bf #1}\kern 2\p@\penalty3000}}%
 \def\abstract{%
  \if@twocolumn
   \section*{Abstract (Not appropriate in this style!)}%
   \else \small 
   \begin{center}{\bf Abstract\vspace{-.5em}\vspace{\z@}}\end{center}%
   \quotation 
   \fi
  }%
   \def\registered{\relax\ifmmode{}\r@gistered
                    \else$\m@th\r@gistered$\fi}%
 \def\r@gistered{^{\ooalign
  {\hfil\raise.07ex\hbox{$\scriptstyle\rm\text{R}$}\hfil\crcr
  \mathhexbox20D}}}}{}%
\newdimen\theight
\def\Column{%
 \vadjust{\setbox\z@=\hbox{\scriptsize\quad\quad tcol}%
  \theight=\ht\z@\advance\theight by \dp\z@\advance\theight by \lineskip
  \kern -\theight \vbox to \theight{%
   \rightline{\rlap{\box\z@}}%
   \vss
   }%
  }%
 }%
\def\qed{%
 \ifhmode\unskip\nobreak\fi\ifmmode\ifinner\else\hskip5\p@\fi\fi
 \hbox{\hskip5\p@\vrule width4\p@ height6\p@ depth1.5\p@\hskip\p@}%
 }%
\def\miss{\hbox{\vrule height2\p@ width 2\p@ depth\z@}}%
\def\tcol#1{{\baselineskip=6\p@ \vcenter{#1}} \Column}  %
\def\newfmtname{LaTeX2e}
\def\chkcompat{%
   \if@compatibility
   \else
     \usepackage{latexsym}
   \fi
}
  \DeclareOldFontCommand{\rm}{\normalfont\rmfamily}{\mathrm}
  \DeclareOldFontCommand{\sf}{\normalfont\sffamily}{\mathsf}
  \DeclareOldFontCommand{\tt}{\normalfont\ttfamily}{\mathtt}
  \DeclareOldFontCommand{\bf}{\normalfont\bfseries}{\mathbf}
  \DeclareOldFontCommand{\it}{\normalfont\itshape}{\mathit}
  \DeclareOldFontCommand{\sl}{\normalfont\slshape}{\@nomath\sl}
  \DeclareOldFontCommand{\sc}{\normalfont\scshape}{\@nomath\sc}
\def\alpha{{\Greekmath 010B}}%
\def\beta{{\Greekmath 010C}}%
\def\gamma{{\Greekmath 010D}}%
\def\delta{{\Greekmath 010E}}%
\def\epsilon{{\Greekmath 010F}}%
\def\zeta{{\Greekmath 0110}}%
\def\eta{{\Greekmath 0111}}%
\def\theta{{\Greekmath 0112}}%
\def\iota{{\Greekmath 0113}}%
\def\kappa{{\Greekmath 0114}}%
\def\lambda{{\Greekmath 0115}}%
\def\mu{{\Greekmath 0116}}%
\def\nu{{\Greekmath 0117}}%
\def\xi{{\Greekmath 0118}}%
\def\pi{{\Greekmath 0119}}%
\def\rho{{\Greekmath 011A}}%
\def\sigma{{\Greekmath 011B}}%
\def\tau{{\Greekmath 011C}}%
\def\upsilon{{\Greekmath 011D}}%
\def\phi{{\Greekmath 011E}}%
\def\chi{{\Greekmath 011F}}%
\def\psi{{\Greekmath 0120}}%
\def\omega{{\Greekmath 0121}}%
\def\varepsilon{{\Greekmath 0122}}%
\def\vartheta{{\Greekmath 0123}}%
\def\varpi{{\Greekmath 0124}}%
\def\varrho{{\Greekmath 0125}}%
\def\varsigma{{\Greekmath 0126}}%
\def\varphi{{\Greekmath 0127}}%
\def\nabla{{\Greekmath 0272}}
\def\FindBoldGroup{%
   {\setbox0=\hbox{$\mathbf{x\global\edef\theboldgroup{\the\mathgroup}}$}}%
}
\def\Greekmath#1#2#3#4{%
    \if@compatibility
        \ifnum\mathgroup=\symbold
           \mathchoice{\mbox{\boldmath$\displaystyle\mathchar"#1#2#3#4$}}%
                      {\mbox{\boldmath$\textstyle\mathchar"#1#2#3#4$}}%
                      {\mbox{\boldmath$\scriptstyle\mathchar"#1#2#3#4$}}%
                      {\mbox{\boldmath$\scriptscriptstyle\mathchar"#1#2#3#4$}}%
        \else
           \mathchar"#1#2#3#4%
        \fi 
    \else 
        \FindBoldGroup
        \ifnum\mathgroup=\theboldgroup 
           \mathchoice{\mbox{\boldmath$\displaystyle\mathchar"#1#2#3#4$}}%
                      {\mbox{\boldmath$\textstyle\mathchar"#1#2#3#4$}}%
                      {\mbox{\boldmath$\scriptstyle\mathchar"#1#2#3#4$}}%
                      {\mbox{\boldmath$\scriptscriptstyle\mathchar"#1#2#3#4$}}%
        \else
           \mathchar"#1#2#3#4%
        \fi     	    
	  \fi}
\newif\ifGreekBold  \GreekBoldfalse
\let\SAVEPBF=\pbf
\def\pbf{\GreekBoldtrue\SAVEPBF}%
  \newcounter{equationnumber}  
  \def\mathletters{%
     \addtocounter{equation}{1}
     \edef\@currentlabel{\theequation}%
     \setcounter{equationnumber}{\c@equation}
     \setcounter{equation}{0}%
     \edef\theequation{\@currentlabel\noexpand\alph{equation}}%
  }
    \def\BibTeX{{\rm B\kern-.05em{\sc i\kern-.025em b}\kern-.08em
                 T\kern-.1667em\lower.7ex\hbox{E}\kern-.125emX}}}{}%
\def\AmS{{\protect\usefont{OMS}{cmsy}{m}{n}%
                A\kern-.1667em\lower.5ex\hbox{M}\kern-.125emS}}}{}%
\let\DOTSI\relax
\def\RIfM@{\relax\ifmmode}%
\def\FN@{\futurelet\next}%
\def\iint{\DOTSI\intno@\tw@\FN@\ints@}%
\def\iiint{\DOTSI\intno@\thr@@\FN@\ints@}%
\def\iiiint{\DOTSI\intno@4 \FN@\ints@}%
\def\idotsint{\DOTSI\intno@\z@\FN@\ints@}%
\def\ints@{\findlimits@\ints@@}%
\newif\iflimtoken@
\newif\iflimits@
\def\findlimits@{\limtoken@true\ifx\next\limits\limits@true
 \else\ifx\next\nolimits\limits@false\else
 \limtoken@false\ifx\ilimits@\nolimits\limits@false\else
 \ifinner\limits@false\else\limits@true\fi\fi\fi\fi}%
\def\multint@{\int\ifnum\intno@=\z@\intdots@                          
 \else\intkern@\fi                                                    
 \ifnum\intno@>\tw@\int\intkern@\fi                                   
 \ifnum\intno@>\thr@@\int\intkern@\fi                                 
 \int}
\def\multintlimits@{\intop\ifnum\intno@=\z@\intdots@\else\intkern@\fi
 \ifnum\intno@>\tw@\intop\intkern@\fi
 \ifnum\intno@>\thr@@\intop\intkern@\fi\intop}%
\def\intic@{%
    \mathchoice{\hskip.5em}{\hskip.4em}{\hskip.4em}{\hskip.4em}}%
\def\negintic@{\mathchoice
 {\hskip-.5em}{\hskip-.4em}{\hskip-.4em}{\hskip-.4em}}%
\def\ints@@{\iflimtoken@                                              
 \def\ints@@@{\iflimits@\negintic@
   \mathop{\intic@\multintlimits@}\limits                             
  \else\multint@\nolimits\fi                                          
  \eat@}
 \else                                                                
 \def\ints@@@{\iflimits@\negintic@
  \mathop{\intic@\multintlimits@}\limits\else
  \multint@\nolimits\fi}\fi\ints@@@}%
\def\intkern@{\mathchoice{\!\!\!}{\!\!}{\!\!}{\!\!}}%
\def\plaincdots@{\mathinner{\cdotp\cdotp\cdotp}}%
\def\intdots@{\mathchoice{\plaincdots@}%
 {{\cdotp}\mkern1.5mu{\cdotp}\mkern1.5mu{\cdotp}}%
 {{\cdotp}\mkern1mu{\cdotp}\mkern1mu{\cdotp}}%
 {{\cdotp}\mkern1mu{\cdotp}\mkern1mu{\cdotp}}}%
\def\RIfM@{\relax\protect\ifmmode}
\def\text{\RIfM@\expandafter\text@\else\expandafter\mbox\fi}
\let\nfss@text\text
\def\text@#1{\mathchoice
   {\textdef@\displaystyle\f@size{#1}}%
   {\textdef@\textstyle\tf@size{\firstchoice@false #1}}%
   {\textdef@\textstyle\sf@size{\firstchoice@false #1}}%
   {\textdef@\textstyle \ssf@size{\firstchoice@false #1}}%
   \glb@settings}
\def\textdef@#1#2#3{\hbox{{%
                    \everymath{#1}%
                    \let\f@size#2\selectfont
                    #3}}}
\newif\iffirstchoice@
\def\Let@{\relax\iffalse{\fi\let\\=\cr\iffalse}\fi}%
\def\vspace@{\def\vspace##1{\crcr\noalign{\vskip##1\relax}}}%
\def\multilimits@{\bgroup\vspace@\Let@
 \baselineskip\fontdimen10 \scriptfont\tw@
 \advance\baselineskip\fontdimen12 \scriptfont\tw@
 \lineskip\thr@@\fontdimen8 \scriptfont\thr@@
 \lineskiplimit\lineskip
 \vbox\bgroup\ialign\bgroup\hfil$\m@th\scriptstyle{##}$\hfil\crcr}%
\def\Sb{_\multilimits@}%
\def\endSb{\crcr\egroup\egroup\egroup}%
\def\Sp{^\multilimits@}%
\newdimen\ex@
\def\rightarrowfill@#1{$#1\m@th\mathord-\mkern-6mu\cleaders
 \hbox{$#1\mkern-2mu\mathord-\mkern-2mu$}\hfill
 \mkern-6mu\mathord\rightarrow$}%
\def\leftarrowfill@#1{$#1\m@th\mathord\leftarrow\mkern-6mu\cleaders
 \hbox{$#1\mkern-2mu\mathord-\mkern-2mu$}\hfill\mkern-6mu\mathord-$}%
\def\leftrightarrowfill@#1{$#1\m@th\mathord\leftarrow
\mkern-6mu\cleaders
 \hbox{$#1\mkern-2mu\mathord-\mkern-2mu$}\hfill
 \mkern-6mu\mathord\rightarrow$}%
\def\overrightarrow{\mathpalette\overrightarrow@}%
\def\overrightarrow@#1#2{\vbox{\ialign{##\crcr\rightarrowfill@#1\crcr
 \noalign{\kern-\ex@\nointerlineskip}$\m@th\hfil#1#2\hfil$\crcr}}}%
\def\overleftarrow{\mathpalette\overleftarrow@}%
\def\overleftarrow@#1#2{\vbox{\ialign{##\crcr\leftarrowfill@#1\crcr
 \noalign{\kern-\ex@\nointerlineskip}$\m@th\hfil#1#2\hfil$\crcr}}}%
\def\overleftrightarrow{\mathpalette\overleftrightarrow@}%
\def\overleftrightarrow@#1#2{\vbox{\ialign{##\crcr
   \leftrightarrowfill@#1\crcr
 \noalign{\kern-\ex@\nointerlineskip}$\m@th\hfil#1#2\hfil$\crcr}}}%
\def\underrightarrow{\mathpalette\underrightarrow@}%
\def\underrightarrow@#1#2{\vtop{\ialign{##\crcr$\m@th\hfil#1#2\hfil
  $\crcr\noalign{\nointerlineskip}\rightarrowfill@#1\crcr}}}%
\def\underleftarrow{\mathpalette\underleftarrow@}%
\def\underleftarrow@#1#2{\vtop{\ialign{##\crcr$\m@th\hfil#1#2\hfil
  $\crcr\noalign{\nointerlineskip}\leftarrowfill@#1\crcr}}}%
\def\underleftrightarrow{\mathpalette\underleftrightarrow@}%
\def\underleftrightarrow@#1#2{\vtop{\ialign{##\crcr$\m@th
  \hfil#1#2\hfil$\crcr
 \noalign{\nointerlineskip}\leftrightarrowfill@#1\crcr}}}%
\def\qopnamewl@#1{\mathop{\operator@font#1}\nlimits@}
\let\nlimits@\displaylimits
\def\setboxz@h{\setbox\z@\hbox}
\def\varlim@#1#2{\mathop{\vtop{\ialign{##\crcr
 \hfil$#1\m@th\operator@font lim$\hfil\crcr
 \noalign{\nointerlineskip}#2#1\crcr
 \noalign{\nointerlineskip\kern-\ex@}\crcr}}}}
 \def\rightarrowfill@#1{\m@th\setboxz@h{$#1-$}\ht\z@\z@
  $#1\copy\z@\mkern-6mu\cleaders
  \hbox{$#1\mkern-2mu\box\z@\mkern-2mu$}\hfill
  \mkern-6mu\mathord\rightarrow$}
\def\leftarrowfill@#1{\m@th\setboxz@h{$#1-$}\ht\z@\z@
  $#1\mathord\leftarrow\mkern-6mu\cleaders
  \hbox{$#1\mkern-2mu\copy\z@\mkern-2mu$}\hfill
  \mkern-6mu\box\z@$}
\def\projlim{\qopnamewl@{proj\,lim}}
\def\injlim{\qopnamewl@{inj\,lim}}
\def\varinjlim{\mathpalette\varlim@\rightarrowfill@}
\def\varprojlim{\mathpalette\varlim@\leftarrowfill@}
\def\varliminf{\mathpalette\varliminf@{}}
\def\varliminf@#1{\mathop{\underline{\vrule\@depth.2\ex@\@width\z@
   \hbox{$#1\m@th\operator@font lim$}}}}
\def\varlimsup{\mathpalette\varlimsup@{}}
\def\varlimsup@#1{\mathop{\overline
  {\hbox{$#1\m@th\operator@font lim$}}}}
\def\align{\@verbatim \frenchspacing\@vobeyspaces \@alignverbatim
You are using the "align" environment in a style in which it is not defined.}
\let\csname endalign*\endcsname =\endtrivlist
\def\alignat{\@verbatim \frenchspacing\@vobeyspaces \@alignatverbatim
You are using the "alignat" environment in a style in which it is not defined.}
\let\csname endalignat*\endcsname =\endtrivlist
\def\xalignat{\@verbatim \frenchspacing\@vobeyspaces \@xalignatverbatim
You are using the "xalignat" environment in a style in which it is not defined.}
\let\csname endxalignat*\endcsname =\endtrivlist
\def\gather{\@verbatim \frenchspacing\@vobeyspaces \@gatherverbatim
You are using the "gather" environment in a style in which it is not defined.}
\let\csname endgather*\endcsname =\endtrivlist
\def\multiline{\@verbatim \frenchspacing\@vobeyspaces \@multilineverbatim
You are using the "multiline" environment in a style in which it is not defined.}
\let\csname endmultiline*\endcsname =\endtrivlist
\def\arrax{\@verbatim \frenchspacing\@vobeyspaces \@arraxverbatim
You are using a type of "array" construct that is only allowed in AmS-LaTeX.}
\def\tabulax{\@verbatim \frenchspacing\@vobeyspaces \@tabulaxverbatim
You are using a type of "tabular" construct that is only allowed in AmS-LaTeX.}
\let\csname endarrax*\endcsname =\endtrivlist
\let\csname endtabulax*\endcsname =\endtrivlist
\def\@@eqncr{\let\@tempa\relax
    \ifcase\@eqcnt \def\@tempa{& & &}\or \def\@tempa{& &}%
      \else \def\@tempa{&}\fi
     \@tempa
     \if@eqnsw
        \iftag@
           \@taggnum
        \else
           \@eqnnum\stepcounter{equation}%
        \fi
     \fi
     \global\tag@false
     \global\@eqnswtrue
     \global\@eqcnt\z@\cr}
 \def\endequation{%
     \ifmmode\ifinner 
      \iftag@
        \addtocounter{equation}{-1} 
        $\hfil
           \displaywidth\linewidth\@taggnum\egroup \endtrivlist
        \global\tag@false
        \global\@ignoretrue   
      \else
        $\hfil
           \displaywidth\linewidth\@eqnnum\egroup \endtrivlist
        \global\tag@false
        \global\@ignoretrue 
      \fi
     \else   
      \iftag@
        \addtocounter{equation}{-1} 
        \eqno \hbox{\@taggnum}
        \global\tag@false%
        $$\global\@ignoretrue
      \else
        \eqno \hbox{\@eqnnum}
        $$\global\@ignoretrue
      \fi
     \fi\fi
 } 
 \newif\iftag@ \tag@false
 \def\tag{\@ifnextchar*{\@tagstar}{\@tag}}
 \def\@tag#1{%
     \global\tag@true
     \global\def\@taggnum{(#1)}}
 \def\@tagstar*#1{%
     \global\tag@true
     \global\def\@taggnum{#1}%
}
\theoremstyle{definition}
\theoremstyle{remark}
\numberwithin{equation}{section}
\begin{document}
\title[Toeplitz Operators Over Infinite Graphs]{Toeplitz Operators in Hilbert Space Over Infinite Graphs}
\author{Ilwoo Cho and Palle E. T. Jorgensen}
\address{St. Ambrose Univ., Dept. of Math., 518 W. Locust St., Davenport, Iowa,
52803, U. S. A. / Univ. of Iowa, Dept. of Math., 14 McLean Hall, Iowa City,
Iowa, 52242, U. S. A. }
\email{chowoo@sau.edu / jorgen@math.uiowa.edu}
\thanks{The second named author is supported by the U. S. National Science
Foundation.}
\date{Aug., 2010}
\subjclass{05C62, 05C90, 17A50, 18B40, 47A99.}
\keywords{Directed Graphs, Graph Groupoids, Graph Operators, Toeplitz Operators.}
\dedicatory{}
\thanks{}
\maketitle

\begin{abstract}
Associated with a given graph $G,$ typically an infinite tree, and motivated
by applications, we introduce two families of operators in a Hilbert space $%
H_{G}$ induced by $G.$ To realize the Hilbert space, we first develop some
representation theory. We obtain the first family of operators on $H_{G}$ by
an extension of the more familiar case of groups: free representations of
the group-algebra. Because of their classical counter parts, we call the
operators in our first family, graph operators; and the second Toeplitz
operators. We focus on the interconnections between the two families. We
introduce and study graph operators in two steps: first, starting with a
fixed graph $G,$ we introduce a groupoid over $G$; and from this, the
groupoid von Neumann algebra $M_{G}.$ Our graph operators will then be
finitely supported elements of $M_{G}.$
\end{abstract}

\strut

\section{Introduction}

Before, starting our problem, we open with a historical comment, and a
comparison between the case of groups and graphs. In a number of recent
papers there have been a variety of different approaches to introducing
algebras of operators in Hilbert space (for a sample, see the papers cited
below). A number of these ideas are motivated by what works for groups,
i.e., starting with the group algebra, and then build representations of it.
Each representation serves some purpose, or is dictated by an application,
for example to harmonic analysis or to quantum mechanics. More than half a
century ago, von Neumann introduced the ring of operators (now called von
Neumann algebras) generated by the free group $F_{n}$ with $n$-generators,
leading to non-hyperfinite factors $L(F_{n})$ (See [27]). While the
construction is simple enough, the questions are difficult. Now, for $F_{n},$
the natural Hilbert space is $l^{2}(F_{n}).$ Since a group acts on itself,
we get operators in $l^{2}(F_{n}),$ i.e., regular representation; and $%
L(F_{n})$ is simply the von Neumann algebra generated by the regular
representation.

Now, let $G$ be a countable directed graph, i.e., a system of vertices and
edges (with direction) subject to simple axioms, details below. It is
tempting, in the analysis of graphs, to mimic some of the constructions used
for groups. But a glance at the comments above and literature shows that
there are difficulties for graphs that do not arise in the case of groups. A
key idea we employ is in brief outline this: Starting with a graph $G,$ we
introduce first an ``enveloping'' groupoid $\Bbb{G}$ and a groupoid algebra $%
\mathcal{A}_{G}.$ We show that $\mathcal{A}_{G}$ contains a canonical
abelian subalgebra $\mathcal{D}_{G}$ (the letter $\mathcal{D}$ for
diagonal!) and a conditional expectation $E_{G}$ from $\mathcal{A}_{G}$ onto 
$\mathcal{D}_{G}.$ We are then able to apply Stinespring's theorem (See
[26]) to $E_{G}.$ The resulting representation is acting on the Stinespring
Hilbert space $H_{G},$ and this will be the starting point of our analysis.

For the benefit of the readers, we collect here some references: [26] the
paper by Stinespring. While there are several relevant papers about von
Neumann's construction, the following will do for our present purpose [27].
There is a diverse set of approaches to Hilbert space, operators, and
operator algebras, and we list here only a sample: [4], [5], [6], [8], [14],
[15], [23], [24], and [25], dealing with graph groupoid dynamical systems
and corresponding crossed product von Neumann algebras, graph Laplacian
Operators, and reproducing kernels, etc. In a different direction, there is
a large literature on graph $C^{*}$-algebras, see for example [20], [21],
and [22]. For relevant papers in graph theory proper, we cite [1], [2],
[16], [17], [18], and [19]; again just a small sample.

Starting with analysis on countable directed graphs $G,$ we introduce
Hilbert spaces $H_{G}$ and a family of weighted operators $T$ on $H_{G}.$
When the weights (which are called \emph{coefficients} later in context) are
chosen, $T$ is called a \emph{graph operator}. From its weights (or
coefficients), we define the support $Supp(T)$ of $T.$

Let $G$ be a countable directed graph. Then there exists a corresponding
algebraic structure $\Bbb{G}$, as a form of groupoid (e.g., see [4], [9],
and [10]). Such a groupoid $\Bbb{G}$ is called the \emph{graph groupoid of} $%
G.$ By constructing the ($C^{*}$- or von Neumann) operator algebra $\mathcal{%
A},$ generated by $\Bbb{G},$ we can study the elements of $\Bbb{G}$ (or $G$)
as operators in $\mathcal{A},$ under a suitable representation of $\Bbb{G}$
(e.g., see [4], [5], and [7]). Interestingly, every operator $x$ on an
arbitrary (separable countable dimensional) Hilbert space $H$ can generate
the \emph{corresponding graph} $G_{a}$, and the $C^{*}$-algebra $C^{*}(a)$,
generated by $a$, is characterized by the groupoid crossed product algebra $%
A_{q}$ $\times _{\alpha }$ $\Bbb{G}_{a}$ induced by the groupoid dynamical
system $(A_{q},$ $\Bbb{G}_{a},$ $\alpha ),$ whenever $a$ is polar-decomposed
by $aq,$ where $a$ is the partial isometry part of $x,$ and $q$ is the
positive part of $x$ (e.g., see [6]).

The main motivation of our study is the above close connection between
directed graphs and operators.

In [8], We defined the graph operators by the ``finitely'' supported
operators in the von Neumann algebra $M_{G}$ $=$ $\overline{\Bbb{C}[L(\Bbb{G}%
)]}^{w},$ generated by $\Bbb{G}$, acting on the Hilbert space $H_{G},$ where
a representation $(H_{G},$ $L)$ of $\Bbb{G}$ is the canonical representation
of $\Bbb{G},$ consisting of the Stinespring Hilbert space\thinspace $H_{G},$
and the canonical groupoid action $L$ (Also, see Section 2 below).

Self-adjointness, the unitary property, Hyponormality and Normality of graph
operators are characterized in [8]. This means that the spectral-property of
graph operators are characterized. These operator-theoretic properties are
characterized by the combinatorial data on supports and the analytic data on
coefficients of graph operators.

In this paper, we find connections between our graph operators and the
well-known Toeplitz operators.

A Futher point motivating our work is from the analysis of general classes
of graphs. For general discrete models, there is no obvious group and
therefore no Fourier duality abailable. Graphs typically are not endowed
with a group structure that invites any kind of Fourier duality. As a basis
for our harmonic analysis, we instead introduce a natural groupoid which
serves as a substitute.

In the body of our paper, we will be using freely tools from the operators
in Hilbert space, the theory of $C^{*}$-algebras and von Neumann algebras.
The reader may find the following background references helpful: [35], and
[36]. Recent relevant papers on graph analysis include: [2], [4], [5], [6],
[7], [28], [29], [30], [31], [32], [33], and [34].

\subsection{Overview}

A \emph{graph} is a set of objects called \emph{vertices} (or points or
nodes) connected by links called \emph{edges} (or lines). In a \emph{%
directed graph}, the two directions are counted as being distinct directed
edges (or arcs). A graph is depicted in a diagrammatic form as a set of dots
(for vertices), jointed by curves (for edges). Similarly, a directed graph
is depicted in a diagrammatic form as a set of dots jointed by arrowed
curves, where the arrows point the direction of the directed edges.

Recently, we have studied the operator-algebraic structures induced by
directed graphs. The key idea to study graph-depending operator algebras is
that: every directed graph $G$ induces its corresponding \emph{groupoid} $%
\Bbb{G},$ called the\emph{\ graph groupoid of} $G$. By considering this
algebraic structure $\Bbb{G},$ we can determine the groupoid actions $%
\lambda ,$ acting on Hilbert spaces $H$. i.e., we can have suitable
representations $(H,$ $\lambda )$ for $\Bbb{G}.$ And this guarantees the
existence of operator algebras $\mathcal{A}_{G}$ $=$ $\overline{\Bbb{C}%
[\lambda (\Bbb{G})]}^{w},$ generated by $\Bbb{G}$ (or induced by $G$), in
the operator algebras $B(H).$ Indeed, the operator algebras $\mathcal{A}_{G}$
are the groupoid topological ($C^{*}$- or $W^{*}$-)subalgebras of $B(H).$

It is interesting that each edge $e$ of $G$ assigns a partial isometry on $%
H, $ and every vertex $v$ of $G$ assigns a projection on $H$ (under various
different types of representations of $\Bbb{G}$). For the continuation of
our recent research, we will fix the \emph{canonical representation\ }$%
(H_{G},$ $L)$\emph{\ of} $\Bbb{G},$ and construct the corresponding von
Neumann algebra

\begin{center}
$M_{G}$ $=$ $\overline{\Bbb{C}[L(\Bbb{G})]}^{w}$ in $B(H_{G}),$
\end{center}

where $H_{G}$ is the \emph{graph Hilbert space} $l^{2}(\Bbb{G})$. This von
Neumann algebra $M_{G}$ is called \emph{the graph von Neumann algebra of} $%
G. $

In this paper, we are interested in certain elements $T$ of $M_{G}.$ Recall
that, by the definition of graph von Neumann algebras, if $T$ $\in $ $M_{G},$
then

\begin{center}
$T$ $=$ $\underset{w\in \Bbb{G}}{\sum }$ $t_{w}L_{w}$ with $t_{w}$ $\in $ $%
\Bbb{C}.$
\end{center}

Define the support $Supp(T)$ of $T$ by

\begin{center}
$Supp(T)$ $=$ $\{w$ $\in $ $\Bbb{G}$ $:$ $t_{w}$ $\neq $ $0\}.$
\end{center}

If $Supp(T)$ is finite, then we call such an operator $T$ a \emph{graph
operator}. If $T$ is a graph operator, then the quantities $t_{w}$, for $w$ $%
\in $ $Supp(T),$ will be called the\emph{\ coefficients of} $T.$

As we can see all graph operators are the (finite) linear sum of generating
operators $L_{w}$'s of $M_{G},$ for $w$ $\in $ $\Bbb{G}.$ i.e., they are the
operators generated by finite numbers of projections and partial isometries
on $H_{G}.$

\subsection{Motivation}

Let $G$ be a graph with its graph groupoid $\Bbb{G},$ and let $H_{G}$ be the
Stinespring (Hilbert) space in the sense of Section 2. i.e., it is
Hilbert-space isomorphic to the $l^{2}$-space $l^{2}(\Bbb{G})$ of $\Bbb{G}.$
Then this Hilbert space $H_{G}$ contains its subspace $H_{V}$ $=$ $l^{2}(V),$
where $V$ means the vertex set of $G.$ In fact,

\begin{center}
$H_{G}$ $=$ $H_{V}$ $\oplus $ $H_{F},$
\end{center}

for some subspace $H_{F}.$ Remark here that, if the edge set $E$ of $G$ is
nonempty, then the orthogonal complement $H_{F}$ of $H_{V}$ is nontrivial in 
$H_{G}.$

We remark here that if a graph $G$ is the $N$-regular tree $\mathcal{T}_{N},$
for $N$ $\in $ $\Bbb{N},$ then

\begin{center}
$H_{V}$ $\overset{\text{Hilbert}}{=}$ $l^{2}\left( \Bbb{N}^{\oplus N}\right) 
$
\end{center}

(See Section 4 below). For example, the $2$-regular tree $\mathcal{T}_{2}$
is a tree

\strut

\begin{center}
$
\begin{array}{llllll}
&  &  &  & \bullet & \cdots \\ 
&  &  & \nearrow &  &  \\ 
&  & \bullet & \rightarrow & \bullet & \cdots \\ 
& \nearrow &  &  &  &  \\ 
\bullet &  &  &  &  &  \\ 
& \searrow &  &  &  &  \\ 
&  & \bullet & \rightarrow & \bullet & \cdots \\ 
&  &  & \searrow &  &  \\ 
&  &  &  & \bullet & \cdots .
\end{array}
$
\end{center}

\strut

Clearly, the 1-regular tree $\mathcal{T}_{1}$ is an infinite linear graph
with its root,

\strut

\begin{center}
$\bullet \longrightarrow \bullet \longrightarrow \bullet \longrightarrow
\cdot \cdot \cdot .$
\end{center}

\strut

In [8], we considered the unitarily equivalent infinite matrix $A_{e}$ on $%
l^{2}(\Bbb{N})$ of a graph operator $L_{e},$ for $e$ $\in $ $E_{1}$ of $%
\mathcal{T}_{1},$ represented on $H_{V}.$ Remark in this case that

\begin{center}
$H_{V}$ $\overset{\text{Hilbert}}{=}$ $l^{2}(\Bbb{N}),$
\end{center}

and

\begin{center}
$A_{e}$ $=$ $\left( 
\begin{array}{lllllll}
0 & 0 &  &  &  &  & 0 \\ 
& \ddots & \ddots &  &  &  &  \\ 
&  & 0 & 0 &  &  &  \\ 
&  &  & 1 & 1 &  &  \\ 
&  &  &  & 0 & 0 &  \\ 
&  &  &  &  & \ddots & \ddots \\ 
0 &  &  &  &  &  & 
\end{array}
\right) $
\end{center}

(Also, See Section 4). This shows that the sum of graph operators

\begin{center}
$\underset{e\in E_{1}}{\sum }$ $L_{e}$ in the von Neumann algebra $M_{%
\mathcal{T}_{1}}$
\end{center}

is represented by the Toeplitz operator

\begin{center}
$\left( 
\begin{array}{lllllll}
1 & 1 &  &  &  &  & 0 \\ 
& 1 & 1 &  &  &  &  \\ 
&  & 1 & 1 &  &  &  \\ 
&  &  & 1 & 1 &  &  \\ 
&  &  &  & 1 & 1 &  \\ 
&  &  &  &  & \ddots & \ddots \\ 
0 &  &  &  &  &  & 
\end{array}
\right) $
\end{center}

on $l^{2}(\Bbb{N})$ $=$ $H_{V}.$

Based on the above observation, we study the relation between graph
operators induced by $N$-regular trees $\mathcal{T}_{N}$ and Toeplitz
operators.

\section{Definitions and Background}

\strut In this section, we introduce the concepts precisely, and definitions
we will use.\strut \strut

\subsection{Graph Groupoids}

Let $G$ be a directed graph with its vertex set $V(G)$ and its edge set $%
E(G).$ Let $e$ $\in $ $E(G)$ be an edge connecting a vertex $v_{1}$ to a
vertex $v_{2}.$ Then we write $e$ $=$ $v_{1}$ $e$ $v_{2},$ for emphasizing
the initial vertex $v_{1}$ of $e$ and the terminal vertex $v_{2}$ of $e.$

For a fixed graph $G,$ we can define the oppositely directed graph $G^{-1},$
with $V(G^{-1})$ $=$ $V(G)$ and $E(G^{-1})$ $=$ $\{e^{-1}$ $:$ $e$ $\in $ $%
E(G)\},$ where each element $e^{-1}$ of $E(G^{-1})$ satisfies that

\begin{center}
$e$ $=$ $v_{1}$ $e$ $v_{2}$ in $E(G)$, with $v_{1},$ $v_{2}$ $\in $ $V(G),$
\end{center}

if and only if

\begin{center}
$e^{-1}$ $=$ $v_{2}$ $e^{-1}$ $v_{1},$ in $E(G^{-1}).$
\end{center}

This opposite directed edge $e^{-1}$ $\in $ $E(G^{-1})$ of $e$ $\in $ $E(G)$
is called the \emph{shadow of} $e.$ Also, this new graph $G^{-1}$, induced
by $G,$ is said to be the \emph{shadow of} $G.$ It is clear that $%
(G^{-1})^{-1}$ $=$ $G.$\strut

Define the \emph{shadowed graph} $\widehat{G}$ of $G$ by a directed graph
with its vertex set

\begin{center}
$V(\widehat{G})$ $=$ $V(G)$ $=$ $V(G^{-1})$
\end{center}

and its edge set

\begin{center}
$E(\widehat{G})$ $=$ $E(G)$ $\cup $ $E(G^{-1})$,
\end{center}

where $G^{-1}$ is the \emph{shadow} of $G$.

We say that two edges $e_{1}$ $=$ $v_{1}$ $e_{1}$ $v_{1}^{\prime }$ and $%
e_{2}$ $=$ $v_{2}$ $e_{2}$ $v_{2}^{\prime }$ are \emph{admissible}, if $%
v_{1}^{\prime }$ $=$ $v_{2},$ equivalently, the finite path $e_{1}$ $e_{2}$
is well-defined on $\widehat{G}.$ Similarly, if $w_{1}$ and $w_{2}$ are
finite paths on $G,$ then we say $w_{1}$ and $w_{2}$ are \emph{admissible},
if $w_{1}$ $w_{2}$ is a well-defined finite path on $G,$ too. Similar to the
edge case, if a finite path $w$ has its initial vertex $v$ and its terminal
vertex $v^{\prime },$ then we write $w$ $=$ $v_{1}$ $w$ $v_{2}.$ Notice that
every admissible finite path is a word in $E(\widehat{G}).$ Denote the set
of all finite path by $FP(\widehat{G}).$ Then $FP(\widehat{G})$ is the
subset of the set $E(\widehat{G})^{*},$ consisting of all finite words in $E(%
\widehat{G}).$

Suppose we take a part

\begin{center}
$
\begin{array}{lll}
& \bullet & \overset{e_{3}}{\longrightarrow }\cdot \cdot \cdot \\ 
& \uparrow & _{e_{2}} \\ 
\cdot \cdot \cdot \underset{e_{1}}{\longrightarrow } & \bullet & 
\end{array}
$
\end{center}

in a graph $G$ or in the shadowed graph $\widehat{G},$ where $e_{1},$ $%
e_{2}, $ $e_{3}$ are edges of $G,$ respectively of $\widehat{G}$. Then the
above admissibility shows that the edges $e_{1}$ and $e_{2}$ are admissible,
since we can obtain a finite path $e_{1}e_{2},$ however, the edges $e_{1}$
and $e_{3}$ are not admissible, since a finite path $e_{1}$ $e_{3}$ is
undefined.

We can construct the \emph{free semigroupoid} $\Bbb{F}^{+}(\widehat{G})$ of
the shadowed graph $\widehat{G},$ as the union of all vertices in $V(%
\widehat{G})$ $=$ $V(G)$ $=$ $V(G^{-1})$ and admissible words in $FP(%
\widehat{G}),$ equipped with its binary operation, the \emph{admissibility}$%
. $ Naturally, we assume that $\Bbb{F}^{+}\Bbb{(}\widehat{G})$ contains the 
\emph{empty word} $\emptyset ,$ as the representative of all undefined (or
non-admissible) finite words in $E(\widehat{G})$.

Remark that some free semigroupoid $\Bbb{F}^{+}\Bbb{(}\widehat{G})$ of $%
\widehat{G}$ does not contain the empty word; for instance, if a graph $G$
is a one-vertex-multi-edge graph, then the shadowed graph $\widehat{G}$ of $%
G $ is also a one-vertex-multi-edge graph too, and hence its free
semigroupoid $\Bbb{F}^{+}(\widehat{G})$ does not have the empty word.
However, in general, if $\left| V(G)\right| $ $>$ $1,$ then $\Bbb{F}^{+}(%
\widehat{G})$ always contain the empty word. Thus, if there is no confusion,
we always assume the empty word $\emptyset $ is contained in the free
semigroupoid $\Bbb{F}^{+}(\widehat{G})$ of $\widehat{G}.$

\begin{definition}
By defining the \emph{reduction }(RR) on $\Bbb{F}^{+}(\widehat{G}),$ we
define the graph groupoid $\Bbb{G}$ of a given graph $G,$ by the subset of $%
\Bbb{F}^{+}(\widehat{G}),$ consisting of all ``reduced'' finite paths on $%
\widehat{G},$ with the inherited admissibility on $\Bbb{F}^{+}(\widehat{G})$
under (RR), where the \emph{reduction} (RR) on $\Bbb{G}$ is\ \strut as
follows:

(RR)\qquad $\qquad \qquad \qquad w$ $w^{-1}$ $=$ $v$ and $w^{-1}w$ $=$ $%
v^{\prime },$\strut

for all $w$ $=$ $v$ $w$ $v^{\prime }$ $\in $ $\Bbb{G},$ with $v,$ $v^{\prime
}$ $\in $ $V(\widehat{G}).$
\end{definition}

Such a graph groupoid $\Bbb{G}$ is indeed a categorial groupoid with its
base $V(\widehat{G})$ (See \textbf{Appendix A}).

\subsection{Canonical Representation of Graph Groupoids}

\strut Let $G$ be a given countable connected directed graph with its graph
groupoid $\Bbb{G}.$ Then we can define the (pure algebraic) algebra $%
\mathcal{A}_{G}$ of $\Bbb{G}$ by a vector space over $\Bbb{C},$ consisting
of all linear combinations of elements of $\Bbb{G},$ i.e.,

\begin{center}
$\mathcal{A}_{G}$ $\overset{def}{=}$ $\Bbb{C}$ $\cup $ $\left( \underset{k=1%
}{\overset{\infty }{\cup }}\left\{ \sum_{j=1}^{k}t_{j}w_{j}\left| 
\begin{array}{c}
w_{j}\in \Bbb{G},\text{ }t_{j}\in \Bbb{C}, \\ 
j=1,...,k
\end{array}
\right. \right\} \right) ,$
\end{center}

under the usual addition ($+$), and the multiplication ($\cdot $), dictated
by the admissibility on $\Bbb{G}.$ Define now a unary operation ($*$) on $%
\mathcal{A}_{G}$ by

\begin{center}
$\sum_{j=1}^{k}$ $t_{j}$ $w_{j}$ $\in $ $\mathcal{A}_{G}$ $\longmapsto $ $%
\sum_{j=1}^{k}$ $\overline{t_{j}}$ $w_{j}^{-1}$ $\in $ $\mathcal{A}_{G},$
\end{center}

where $\overline{z}$ means the conjugate of $z,$ for all $z$ $\in $ $\Bbb{C}%
, $ and of course $w^{-1}$ means the shadow of $w,$ for all $w$ $\in $ $\Bbb{%
G}.$ We call this unary operation ($*$), the \emph{adjoint} (or the \emph{%
shadow}) on $\mathcal{A}_{G}.$ Then the vector space $\mathcal{A}_{G},$
equipped with the adjoint ($*$), is a well-defined (algebraic) $*$-algebra.

Now, define a $*$-subalgebra $\mathcal{D}_{G}$ of $\mathcal{A}_{G}$ by

\begin{center}
$\mathcal{D}_{G}$ $\overset{def}{=}$ $\Bbb{C}$ $\cup $ $\left( \underset{k=1%
}{\overset{\infty }{\cup }}\left\{ \sum_{j=1}^{n}t_{j}\text{ }v_{j}\left| 
\begin{array}{c}
v_{j}\in V(\widehat{G}),\text{ }t_{j}\in \Bbb{C}, \\ 
j=1,...,k
\end{array}
\right. \right\} \right) .$
\end{center}

This $*$-algebra $\mathcal{D}_{G}$ acts like the diagonal of $\mathcal{A}%
_{G},$ so we call $\mathcal{D}_{G},$ the \emph{diagonal} ($*$-)\emph{%
subalgebra of} $\mathcal{A}_{G}.$

\subsubsection{The Hilbert Space $H_{G}$}

Below, we identify the canonical Hilbert space $H_{G}.$ The algebra $%
\mathcal{A}_{G}$ is represented by bounded linear operators acting on $%
H_{G}. $ The representation is induced by the canonical conditional
expectation, via the Stinespring construction (e.g., see [14]).

We can construct a (algebraic $*$-)conditional expectation

\begin{center}
$E$ $:$ $\mathcal{A}_{G}$ $\rightarrow $ $\mathcal{D}_{G}$
\end{center}

by

(2.2.1)

\begin{center}
$E\left( \underset{w\in X}{\sum }\text{ }t_{w}w\right) $ $\overset{def}{=}$ $%
\underset{v\in X\cap V(\widehat{G})}{\sum }$ $t_{v}$ $v,$
\end{center}

for all $\underset{w\in X}{\sum }$ $t_{w}w$ $\in $ $\mathcal{A}_{G},$ where $%
X$ means a finite subset of $\Bbb{G}.$

Since the conditional expectation $F$ is completely positive under a
suitable topology on $\mathcal{A}_{G}$, we may apply the Stinespring's
construction. i.e., the diagonal subalgebra $\mathcal{D}_{G}$ is represented
as the $l^{2}$-space, $l^{2}(V(\widehat{G})),$ by the concatenation. Then we
can obtain the Hilbert space $H_{G},$

\begin{center}
$H_{G}$ $\overset{def}{=}$ the Stinespring space of $\mathcal{A}_{G}$ over $%
\mathcal{D}_{G},$ by $F,$
\end{center}

containing $l^{2}(V(\widehat{G})).$ i.e., if $\pi _{(E,\mathcal{D}_{G})}$ is
the Stinespring representation of $\mathcal{A}_{G},$ acting on $l^{2}(V(%
\widehat{G})),$ then

\begin{center}
$H_{G}$ $=$ $\pi _{(E,\mathcal{D}_{G})}\left( \mathcal{A}_{G}\right) .\strut 
$
\end{center}

This Stinespring space $H_{G}$ is the Hilbert space with its \emph{inner
product} $<,>_{G}$ satisfying that:

\begin{center}
$<h,$ $\pi _{(E,\mathcal{D}_{G})}(a)$ $k$ $>_{G}$ $=$ $<h,$ $E(a)$ $k$ $%
>_{2},$
\end{center}

for all $h,$ $k$ $\in $ $l^{2}(V(\widehat{G})),$ for all $a$ $\in $ $%
\mathcal{A}_{G},$ where $<,>_{2}$ is the inner product on $l^{2}(V(\widehat{G%
})).$\strut

i.e., The Stinespring space $H_{G}$ is the norm closure of $\mathcal{A}_{G},$
by the norm,

(2.2.2)

\begin{center}
$\left\| \sum_{j=1}^{n}\text{ }w_{i}\otimes h_{i}\right\| _{G}^{2}$ $=$ $%
\sum_{i=1}^{n}$ $\sum_{k=1}^{n}$ $<h_{i},$ $E(w_{i}^{*}w_{k})$ $h_{k}$ $%
>_{2},$
\end{center}

induced by the Stinespring inner product $<,>_{G}$ on $\mathcal{A}_{G},$ for
all $a_{i}$ $\in $ $\mathcal{A}_{G},$ $h_{i}$ $\in $ $l^{2}(V(\widehat{G})),$
for all $n$ $\in $ $\Bbb{N}.$

\begin{definition}
We call this Stinespring space $H_{G},$ the graph Hilbert space of $\Bbb{G}$
(or of $G$).
\end{definition}

\strut Denote the Hilbert space element $\pi _{(E,\mathcal{D}_{G})}(w)$ by $%
\xi _{w}$ in the graph Hilbert space $H_{G},$ for all $w$ $\in $ $\Bbb{G},$
with the identification,

\begin{center}
$\xi _{\emptyset }$ $=$ $0_{H_{G}},$ the zero vector in $H_{G},$
\end{center}

where $\emptyset $ is the empty word (if exists) of $\Bbb{G}.$ We can check
that the subset $\{\xi _{w}$ $:$ $w$ $\in $ $\Bbb{G}\}$ of $H_{G}$ satisfies
the following \emph{multiplication rule}:

\begin{center}
$\xi _{w_{1}}$ $\xi _{w_{2}}$ $=$ $\xi _{w_{1}w_{2}},$ on $H_{G},$
\end{center}

for all $w_{1},$ $w_{2}$ $\in $ $\Bbb{G}.$ Thus, we can define the \emph{%
canonical multiplication operators }$L_{w}$\emph{\ on} $H_{G}$, satisfying
that

\begin{center}
$L_{w}$ $\xi _{w^{\prime }}$ $\overset{def}{=}$ $\xi _{w}$ $\xi _{w^{\prime
}}$ $=$ $\xi _{ww^{\prime }},$
\end{center}

for all $w,$ $w^{\prime }$ $\in $ $\Bbb{G}.$ The existence of such
multiplication operators $L_{w}$'s guarantees the existence of a groupoid
action $L$ of $\Bbb{G},$ acting on $H_{G}$;

\begin{center}
$L$ $:$ $w$ $\in $ $\Bbb{G}$ $\longmapsto $ $L(w)$ $\overset{def}{=}$ $L_{w}$
$\in $ $B(H_{G}).$
\end{center}

This action $L$ of $\Bbb{G}$ is called the \emph{canonical groupoid action
of }$\Bbb{G}$\emph{\ on} $H_{G}.$

\subsubsection{\strut The Operators $L_{w}$}

Let $w$ and $w_{i}$ denote reduced finite paths in $FP_{r}(\widehat{G}),$
for $i$ $\in $ $\Bbb{N},$ equivalently, they are the reduced words in the
edge set $E(\widehat{G}),$ under the reduction (RR). Consider

(2.2.3)

\begin{center}
$L_{w}\left( \underset{i}{\sum }w_{i}\otimes h_{i}\right) $ $=$ $\underset{i%
}{\sum }$ $ww_{i}$ $\otimes $ $h_{i},$
\end{center}

for $h_{i}$ $\in $ $l^{2}(\Bbb{N})$. Here, the element $\underset{i}{\sum }$ 
$w_{i}$ $\otimes $ $h_{i}$ denotes a finite sum of tensors in $\mathcal{A}%
_{G}$. And $ww_{i}$ in (2.2.3) means concatenation of finite words. With the
conditional expectation $E$ $:$ $\mathcal{A}_{G}$ $\rightarrow $ $\mathcal{D}%
_{G}$ (See (2.2.1) above), we get the \emph{Stinespring representation} $%
(H_{G},$ $\pi _{(E,\mathcal{D}_{G})}),$ and the operators

$\pi _{(E,\mathcal{D}_{G})}(w)$ $:$ $\mathcal{H}_{G}$ $\rightarrow $ $%
\mathcal{H}_{G}$

obtained from (2.2.3) by passing to the quotient and completion as in
Definition 2.2. To simplify terminology, in the sequel, we will simply write 
$L_{w}$ for the operator $\pi _{(E,\mathcal{D}_{G})}(w).$

\subsubsection{\strut Graph von Neumann Algebras}

Let $G$, $\Bbb{G},$ and $H_{G}$ be given as above. And let $\{L_{w}$ $:$ $w$ 
$\in $ $\Bbb{G}\}$ the multiplication operators on $H_{G}$, where $L$ is the
canonical groupoid action of $\Bbb{G}.$

\begin{definition}
Let $G$ be a countable directed graph with its graph groupoid $\Bbb{G}.$ The
pair $(H_{G},$ $L)$ of the graph Hilbert space $H_{G}$ and the canonical
groupoid action $L$ of $\Bbb{G}$ is called the canonical representation of $%
\Bbb{G}$. The corresponding groupoid von Neumann algebra

\begin{center}
$M_{G}$ $\overset{def}{=}$ $\overline{\Bbb{C}[L(\Bbb{G})]}^{w},$
\end{center}

generated by $\Bbb{G}$ (equivalently, by $L(\Bbb{G})$ $=$ $\{L_{w}$ $:$ $w$ $%
\in $ $\Bbb{G}\}$), as a $W^{*}$-subalgebra of $B(H_{G})$, is called the
graph von Neumann algebra of $G.$
\end{definition}

\strut \strut We can check that the generating operators $L_{w}$'s of the
graph von Neumann algebra $M_{G}$ of $G$ satisfies that:

\begin{center}
$L_{w}^{*}$ $=$ $L_{w^{-1}},$ for all $w$ $\in $ $\Bbb{G},$
\end{center}

and

\begin{center}
$L_{w_{1}}L_{w_{2}}$ $=$ $L_{w_{1}w_{2}},$ for all $w_{1},$ $w_{2}$ $\in $ $%
\Bbb{G}.$\\[0pt]
\end{center}

It is easy to check that if $v$ is a vertex in $\Bbb{G},$ then the graph
operator $L_{v}$ is a projection, since

\begin{center}
$L_{v}^{*}$ $=$ $L_{v^{-1}}$ $=$ $L_{v}$ $=$ $L_{v^{2}}$ $=$ $L_{v}^{2}.$
\end{center}

Thus, by the reduction (RR) on $\Bbb{G},$ we can conclude that if $w$ is a
nonempty reduced finite path in $FP_{r}(\widehat{G}),$ then the operator $%
L_{w}$ is a partial isometry, since

\begin{center}
$L_{w}^{*}$ $L_{w}$ $=$ $L_{w^{-1}w}$,
\end{center}

and $w^{-1}w$ is a vertex, and hence $L_{w}^{*}L_{w}$ is a projection on $%
H_{G}.$\strut

\section{\strut Graph Operators}

In this section, we introduce graph operators and summarize the
operator-theoretical properties of graph operators obtained in [8]. These
results will be applied to characterize the operator-theoretic properties of
Toeplitz operators in Section 5.

Let $G$ be a graph with its graph groupoid $\Bbb{G},$ and let $M_{G}$ $=$ $%
\overline{\Bbb{C}[L(\Bbb{G})]}^{w}$ be the graph von Neumann algebra of $G$
in $B(H_{G}),$ where $(H_{G},$ $L)$ is the canonical representation of $\Bbb{%
G}.$ Since $M_{G}$ is a groupoid von Neumann algebra generated by $\Bbb{G},$
every element $T$ of $M_{G}$ satisfies the expansion,

\begin{center}
$T$ $=$ $\underset{w\in \Bbb{G}}{\sum }$ $t_{w}$ $L_{w},$ with $t_{w}$ $\in $
$\Bbb{C}.$
\end{center}

For the given operator $T$ $\in $ $M_{G},$ having the above expansion,
define the subset $Supp(T)$ of $\Bbb{G}$ by

\begin{center}
$Supp(T)$ $\overset{def}{=}$ $\{w$ $\in $ $\Bbb{G}$ $:$ $t_{w}$ $\neq $ $%
0\}. $
\end{center}

This subset $Supp(T)$ of $\Bbb{G}$ is called the \emph{support of} $T.$ And
the constants $t_{w}$'s, for $w$ $\in $ $Supp(T),$ are said to be the \emph{%
coefficients of }$T.$

\begin{definition}
Let $T$ be an element of the graph von Neumann algebra $M_{G}$ of a given
graph $G,$ and let $Supp(T)$ be the support of $T.$ If $Supp(T)$ is finite,
then we call the operator $T$, a graph operator. The graph operators $L_{w}$%
, generating $M_{G},$ for all $w$ $\in $ $\Bbb{G}$ $\setminus $ $\{\emptyset
\},$ are called the generating (graph) operators.
\end{definition}

i.e., the graph operators are the finitely supported operators on $H_{G}.$

In [8], we characterize the spectral-theoretical properties of graph
operators, in terms of their supports and coefficients. In this paper, we
concentrate on studying the connections between certain graph operators and
Toeplitz operators.

\section{Background for Main Results}

In this section, we consider the fundamental background of the main results
of this paper obtained in Sections 5 and 6. In Section 4.1, we discuss about
the decomposition of graph Hilbert spaces. We observe that whenever a graph
Hilbert space $H_{G}$ is given, there exists a (closed) subspace $H_{V},$
induced by all vertices of $G,$ such that

\begin{center}
$H_{G}$ $=$ $H_{V}$ $\oplus $ $H_{V}^{\perp },$
\end{center}

and $H_{V}^{\perp }$ is induced by all reduced finite paths of $G.$
Moreover, we will restrict our interests to the case where a given graph $G$
is a regular tree.

In Section 4.2, we consider generalized Toeplitz algebra $Toep(H)$ over an
arbitrary Hilbert space $H.$ In fact, the $C^{*}$-algebra $Toep(H)$ is
well-known, but we are particularly interested in the anti-$*$-isomorphic $%
C^{*}$-algebra $Toep^{r}(H)$ of $Toep(H).$

\subsection{Graph Operators Induced by Regular Trees}

In this section, we restrict our interests to the case where the given
graphs are regular trees $\mathcal{T}_{N}$, for $N$ $\in $ $\Bbb{N}.$ Notice
that the regular trees are \emph{simplicial}, in the sense that (i) they do
not allow loop-edges, and (ii) they do not have multi-edges, equivalently,
if there is an edge connecting two vertices, then there is no other edge
connecting those vertices. For instance, the following three graphs $G_{1},$ 
$G_{2}$, and $G_{3}$ are not simplicial, where

\strut

\begin{center}
$G_{1}$ $=$ \quad $\underset{\circlearrowleft }{\overset{v}{\bullet }}%
\longrightarrow \bullet ,$
\end{center}

\strut

\begin{center}
$G_{2}$ $=$ \quad $\underset{v_{1}}{\bullet }\rightrightarrows \underset{%
v_{2}}{\bullet }\leftarrow \bullet ,$
\end{center}

and

\begin{center}
$G_{3}$ $=$ \quad $\bullet \rightarrow \underset{\circlearrowleft }{\bullet }%
\leftleftarrows \bullet .$
\end{center}

\strut

Indeed, the graph $G_{1}$ has a loop-edge connecting from the vertex $v$ to
itself, and hence it is not simplicial; the graph $G_{2}$ contain two edges
connecting the vertex $v_{1}$ to the vertex $v_{2}$, and hence it is not
simplicial; the graph $G_{3}$ is not simplicial because it has both
loop-edge and multi-edges.

Since the regular trees $\mathcal{T}_{N}$ are simplicial, we can put the
suitable name (or indices) for the vertices. For the $N$-regular tree $%
\mathcal{T}_{N},$ we will put the name $1$ for the root of $\mathcal{T}_{N},$
and the $N$-vertices in the 1-st level of $\mathcal{T}_{N}$ have their names 
$11,$ $12,$ ..., $1N.$ And the $N^{2}$-vertices in the 2-nd level of $%
\mathcal{T}_{N}$ have their names $111,$ ..., $11N,$ $121,$ ..., $12N,$ ..., 
$1N1,$ ..., $1NN,$ etc. For instance, the $2$-regular tree $\mathcal{T}_{2}$
has its vertices with their indices as follows:

\strut

\begin{center}
$
\begin{array}{lllllllll}
\begin{array}{l}
\text{{\tiny root}}
\end{array}
&  & 
\begin{array}{l}
\text{{\tiny 1-st}} \\ 
\text{{\tiny level}}
\end{array}
&  & 
\begin{array}{l}
\text{{\tiny 2-nd}} \\ 
\text{{\tiny level}}
\end{array}
&  & 
\begin{array}{l}
\text{{\tiny 3-rd}} \\ 
\text{{\tiny level}}
\end{array}
&  &  \\ 
&  &  &  &  &  & \bullet ^{1111} & \cdots &  \\ 
&  &  &  & \overset{111}{\bullet } & \overset{\nearrow }{\underset{x}{%
\rightarrow }} & \bullet _{1112} & \cdots &  \\ 
&  &  & \nearrow &  &  &  &  &  \\ 
&  & \overset{11}{\bullet } & \rightarrow & \underset{112}{\bullet } & 
\underset{\searrow }{\rightarrow } & \bullet ^{1121} & \cdots &  \\ 
& \nearrow &  &  &  &  & \bullet _{1122} & \cdots &  \\ 
_{1}\bullet &  &  &  &  &  &  &  &  \\ 
&  &  &  &  &  &  &  &  \\ 
& \searrow &  &  &  &  & \bullet ^{1211} & \cdots &  \\ 
&  & \underset{12}{\bullet } & \rightarrow & \overset{121}{\bullet } & 
\overset{\nearrow }{\rightarrow } & \bullet _{1212} & \cdots &  \\ 
&  &  & \overset{x_{1}}{\searrow } &  &  &  &  &  \\ 
&  &  &  & \underset{122}{\bullet } & \underset{\searrow }{\overset{x_{2}}{%
\rightarrow }} & \bullet ^{1221} & \cdots &  \\ 
&  &  &  &  &  & \bullet _{1222} & \cdots & 
\end{array}
$
\end{center}

\strut

And each edge $e$ of $\mathcal{T}_{N}$ connecting the vertex $v_{1}$ to the
vertex $v_{2}$ can be denoted by the pair $(v_{1},$ $v_{2}),$ again by the
simpliciality of $\mathcal{T}_{N}.$ For instance, in $\mathcal{T}_{2},$ the
edge $x$ in the above figure is denoted by the pair $(111,$ $1112).$ Such a
pair notation does not fit for arbitrary graph case (in particular, where a
graph allows multi-edges). But, for simplicial graphs, this pair notation
works well.

Thus a length-$k$ finite path $w$ can be denoted by $(k$ $+$ $1)$-tuple of
passing vertices, for $k$ $\in $ $\Bbb{N}.$ For example, if $w$ $=$ $x_{1}$ $%
x_{2}$ in $\mathcal{T}_{2},$ where $x_{1}$ and $x_{2}$ are edges in the
above figure, then

\begin{center}
$x_{1}$ $=$ $(12,$ $122),$ $x_{2}$ $=$ $(122,$ $1221),$
\end{center}

and

\begin{center}
$w$ $=$ $x_{1}x_{2}$ $=$ $(12,$ $122,$ $1221).$
\end{center}

Clearly, if we have a finite path expressed by the $(k$ $+$ $1)$-tuple $w$ $%
= $ $(v_{1},$ $v_{2},$ ..., $v_{k+1}),$ then we can understand $w$ as a
length-$k$ finite path $w$ $=$ $e_{1}$ ... $e_{k},$ generated by the
admissible edges $e_{1},$ ..., $e_{k},$ where

\begin{center}
$e_{j}$ $=$ $(v_{j},$ $v_{j+1}),$ for all $j$ $=$ $1,$ ..., $k.$
\end{center}

For the given $N$-regular tree $\mathcal{T}_{N}$ (under the above setting on
vertices and edges) we can determine the graph groupoid $\Bbb{G}_{N}$ $=$ $%
\Bbb{G}_{\mathcal{T}_{N}},$ and the corresponding graph von Neumann algebra $%
M_{N}$ $=$ $M_{\mathcal{T}_{N}},$ for $N$ $\in $ $\Bbb{N}.$ We are
interested in graph operators in $M_{N}.$\strut

Let $\mathcal{T}_{N}$ be the $N$-regular tree with its graph groupoid $\Bbb{G%
}_{N},$ and let $H_{N}$ and $M_{N}$ be the corresponding graph Hilbert space
and the graph von Neumann algebra of $\mathcal{T}_{N},$ respectively, for $N$
$\in $ $\Bbb{N}.$ By the Stinespring construction, the graph Hilbert space $%
H_{N}$ has its subspace $H_{V},$

(4.1.1)

\begin{center}
$
\begin{array}{ll}
H_{V} & \,=l^{2}\left( V(\mathcal{T}_{N})\right) \\ 
& \overset{\text{Hilbert}}{=}\Bbb{C}^{\oplus \left| V(\mathcal{T}%
_{N})\right| }\overset{\text{Hilbert}}{=}\underset{v\in V(\mathcal{T}_{N})}{%
\oplus }\Bbb{C}\xi _{v},
\end{array}
$
\end{center}

where $V(\mathcal{T}_{N})$ $=$ $V\left( \widehat{\mathcal{T}_{N}}\right) $
is the vertex set of $\mathcal{T}_{N},$ where

\begin{center}
$\xi _{v}$ $=$ $\pi _{(E,\mathcal{D}_{\mathcal{T}_{N}})}(v),$ for all $v$ $%
\in $ $V(\mathcal{T}_{N})$
\end{center}

(See Section 2.2). We call the subspace $H_{V}$ of $H_{N},$ the \emph{vertex
space of} $\mathcal{T}_{N}.$ For convenience, let's denote $V(\mathcal{T}%
_{N})$ simply by $V_{N},$ for $N$ $\in $ $\Bbb{N}.$ Thus, the Hilbert space $%
H_{N}$ is decomposed by

\begin{center}
$H_{N}$ $=$ $H_{V}$ $\oplus $ $H_{FP},$
\end{center}

where

\begin{center}
$H_{FP}$ $=$ $H_{N}$ $\ominus $ $H_{V}$ $=$ $l^{2}\left( FP_{r}(\mathcal{T}%
_{N})\right) ,$
\end{center}

where $FP_{r}(\mathcal{T}_{N})$ is the reduced finite path set of $\Bbb{G}%
_{N}.$

Now, let's denote $E_{k}^{N}$ be the \emph{length-}$k$\emph{\ reduced finite
path set}, which is the subset of $FP_{r}\left( \widehat{\mathcal{T}_{N}}%
\right) $ consisting of all length-$k$ reduced finite paths on the shadowed
graph $\widehat{\mathcal{T}_{N}}$ of $\mathcal{T}_{N},$ for all $k$ $\in $ $%
\Bbb{N}.$ Clearly, the edge set $E\left( \widehat{\mathcal{T}_{N}}\right) $
of $\widehat{\mathcal{T}_{N}}$ is the set $E_{1}^{N},$ and the set $%
FP_{r}\left( \widehat{\mathcal{T}_{N}}\right) $ is partitioned by

\begin{center}
$FP_{r}\left( \widehat{\mathcal{T}_{N}}\right) $ $=$ $\underset{k=1}{%
\overset{\infty }{\sqcup }}$ $E_{k}^{N},$
\end{center}

set-theoretically, where $\sqcup $ means the disjoint union. So, the
subspace $H_{FP}$ of $H_{N}$ is Hilbert-space isomorphic to

\begin{center}
$H_{FP}$ $=$ $\underset{k=1}{\overset{\infty }{\oplus }}$ $\left( \underset{%
w\in E_{k}}{\oplus }\Bbb{C}\xi _{w}\right) ,$
\end{center}

whenever

\begin{center}
$\Bbb{C}\xi _{w}$ $\overset{\text{Hilbert}}{=}$ \ $\underset{k\text{-times}}{%
\underbrace{\Bbb{C}\otimes \cdot \cdot \cdot \otimes \Bbb{C}}}$ $\;=$ $\Bbb{C%
}^{\otimes k},$
\end{center}

for all $w$ $\in $ $E_{k},$ for all $k$ $\in $ $\Bbb{N},$ where

\begin{center}
$\xi _{w}$ $=$ $\pi _{(E,\mathcal{D}_{\mathcal{T}_{N}})}(w),$ for all $w$ $%
\in $ $FP_{r}\left( \widehat{\mathcal{T}_{N}}\right) .$
\end{center}

The above observation shows that the graph Hilbert space $H_{N}$ has its
orthonormal basis (or its Hilbert basis),

\begin{center}
$\{\xi _{w}$ $:$ $w$ $\in $ $\Bbb{G}_{N}$ $\setminus $ $\{\emptyset \}\}.$
\end{center}

Therefore, if we define the Hilbert space $l^{2}(\Bbb{G}_{N})$ by the $l^{2}$%
-space generated by $\Bbb{G}_{N}$ $\setminus $ $\{\emptyset \},$ more
precisely,

(4.1.2)

\begin{center}
$l^{2}(\Bbb{G}_{N})$ $\overset{def}{=}$ $\left( \underset{v\in V(\mathcal{T}%
_{N})}{\oplus }\text{ }\Bbb{C}\eta _{v}\right) $ $\oplus $ $\left( \underset{%
w\in FP_{r}\left( \widehat{\mathcal{T}_{N}}\right) }{\oplus }\text{ }\Bbb{C}%
\eta _{w}\right) ,$
\end{center}

with its Hilbert basis

\begin{center}
$\{\eta _{w}$ $:$ $w$ $\in $ $\Bbb{G}_{N}$ $\setminus $ $\{\emptyset \}\},$
\end{center}

then the graph Hilbert space $H_{G}$ and the Hilbert space $l^{2}(\Bbb{G}%
_{N})$ are Hilbert-space isomorphic

\begin{center}
$H_{G}$ $\overset{\text{Hilbert}}{=}$ $l^{2}(\Bbb{G}_{N}).$
\end{center}

\strut So, without loss of generality, we may consider our graph Hilbert
space $H_{N}$ (the Stinesping space) as $l^{2}(\Bbb{G}_{N}).$ In the
following context, we use $H_{G}$ and $l^{2}(\Bbb{G}_{N}),$ alternatively.

Remark that, in [4] and [7], we define the graph Hilbert space $H_{G}$ of a
given arbitrary countable directed graph $G$ by $l^{2}(\Bbb{G})$, where $%
\Bbb{G}$ is the graph groupoid of $G.$

\strut Now, consider the graph groupoid $\Bbb{G}_{N}$ of the $N$-regular
tree $\mathcal{T}_{N}$ more in detail.

Let $(v_{1},$ $v_{2})$ be an edge of $\mathcal{T}_{N}.$ Then its shadow has
its pair notation $(v_{2},$ $v_{1}).$ So, we can have

\begin{center}
$(v_{1},$ $v_{2})$ $(v_{2},$ $v_{1})$ $=$ $(v_{1},$ $v_{2},$ $v_{1})$ $=$ $%
v_{1},$
\end{center}

by the reduction (RR) on $\Bbb{G}_{N}.$ This means that, if we have a
``nonempty'' element

\begin{center}
$(v_{1},$ $v_{2},$ ..., \underline{\frame{$v_{j}$}$,$ ..., $v_{n},$ ..., 
\frame{$v_{j}$}}$,$ $v_{j+1},$ ..., $v_{k}),$
\end{center}

in $FP_{r}\left( \widehat{\mathcal{T}_{N}}\right) ,$ then it is reduced (and
hence identical) to a length-$(k$ $-$ $1)$ reduced finite path

\begin{center}
$(v_{1},$ ..., \underline{$v_{j}$}$,$ $v_{j+1},$ ..., $v_{k}),$
\end{center}

in $FP_{r}\left( \widehat{\mathcal{T}_{N}}\right) ,$ for $k$ $\in $ $\Bbb{N}$%
, where the length-0 reduced finite paths mean the vertices (i.e., where $k$ 
$=$ $1$).

\strut By operator theory, we can represent each element $T$ of the graph
von Neumann algebra $M_{N}$ on $H_{N}$. However, we are interested in the
representation of $T$ on the vertex space $H_{V}.$\strut

\subsection{\strut Toeplitz Algebras $Toep(H)$}

Let $H$ be an arbitrary Hilbert space, throughout this section. The \emph{%
Fock space }$\mathcal{F}_{H}$\emph{\ }$\overset{denote}{=}$\emph{\ }$%
\mathcal{F}(H)$\emph{\ over} $H$ is defined by a Hilbert space,

\begin{center}
$\mathcal{F}_{H}$ $\overset{def}{=}$ $\underset{n=0}{\overset{\infty }{%
\oplus }}$ $H^{\otimes n},$ with $H^{\otimes n}$ $=$ $\Bbb{C}\Omega $ $=$ $%
\Bbb{C},$
\end{center}

where $\Omega $ means the vacuum vector, where the direct sum $\oplus $ and
the tensor product $\otimes $ are all defined under the Hilbert (product)
topology.

Now, fix a Hilbert-space element $h$ of $H,$ and then define an operator $%
l_{h}$ on $\mathcal{F}_{H}$ by an operator satisfying

(4.2.1)

\begin{center}
$l_{h}(\Omega )$ $=$ $h,\qquad \qquad \qquad \qquad $ and

$l_{h}$ $:$ $\xi _{1}$ $\otimes $ ... $\otimes $ $\xi _{n}$ $\mapsto $ $h$ $%
\otimes $ $\xi _{1}$ $\otimes $ ... $\otimes $ $\xi _{n},$
\end{center}

for all $n$ $\in $ $\Bbb{N}.$ Then clearly, we can check that the adjoint $%
l_{h}^{*}$ of $l_{h}$ is an operator satisfying that:

(4.2.2)

\begin{center}
$l_{h}^{*}(\Omega )$ $=$ $0_{\mathcal{F}_{H}},$ \qquad \qquad \qquad \qquad
and

$l_{h}^{*}$ $:$ $\xi _{1}$ $\otimes $ $\xi _{2}$ $\otimes $ ... $\otimes $ $%
\xi _{n}$ $\mapsto $ $<h,$ $\xi _{1}$ $>_{H}$ $\xi _{2}$ $\otimes $ ... $%
\otimes $ $\xi _{n},$
\end{center}

for all $n$ $\in $ $\Bbb{N},$ where $0_{\mathcal{F}_{H}}$ is the zero vector
in $\mathcal{F}_{H},$ and $<,>_{H}$ means the inner product on the Hilbert
space $H.$

\begin{definition}
Let $l_{h}$ be an operator on the Fock space $\mathcal{F}_{H}$ over a
Hilbert space $H,$ defined in (4.2.1), for a fixed element $h$ $\in $ $H.$
Then it is called the (left) creation operator induced by $h.$ The adjoint $%
l_{h}^{*}$ of $l_{h},$ satisfying (4.2.2), is called the (left) annihilation
operator induced by $h.$
\end{definition}

\strut Remark that the operators

\begin{center}
$\{l_{h},$ $l_{h}^{*}$ $\in $ $B\left( \mathcal{F}_{H}\right) $ $:$ $h$ $\in 
$ $H\}$
\end{center}

satisfy the relation,

(4.2.3)

\begin{center}
$l_{h_{1}}^{*}$ $l_{h_{2}}$ $=$ $<h_{1},$ $h_{2}>_{H}$ $1_{\mathcal{F}_{H}},$
\end{center}

where $1_{\mathcal{F}_{H}}$ is the identity operator on $\mathcal{F}_{H},$
for all $h_{1},$ $h_{2}$ $\in $ $H.$

\begin{definition}
The $C^{*}$-subalgebra $Toep(H)$ of $B\left( \mathcal{F}_{H}\right) ,$
generated by the creation operators,

\begin{center}
$\{l_{h}$ $\in $ $B\left( \mathcal{F}_{H}\right) $ $:$ $h$ $\in $ $H\}$
\end{center}

is called the Toeplitz operator over $H.$ And the elements of $Toep(H)$ are
said to be (generalized) Toeplitz operators (over $H$). In particular, if $%
\dim H$ $=$ $1,$ then $Toep(H)$ is $*$-isomorphic to the classical Toeplitz
algebra $Toep$ in $B\left( l^{2}(\Bbb{N})\right) .$
\end{definition}

\strut Let $\mathcal{F}_{H}$ be the Fock space over $H$ given as above, also
let's fix a Hilbert-space element $h$ in $H.$ Now, we will define a new
operator $r_{h}$ induced by $h$ by an operator on $\mathcal{F}_{H}$
satisfying that

(4.2.4)

\begin{center}
$r_{h}(\Omega )$ $=$ $h,$ \qquad \qquad \qquad and

$r_{h}$ $:$ $\xi _{1}$ $\otimes $ ... $\otimes $ $\xi _{n}$ $\mapsto $ $\xi
_{1}$ $\otimes $ ... $\otimes $ $\xi _{n}$ $\otimes $ $h,$
\end{center}

\strut for all $n$ $\in $ $\Bbb{N}.$ Then, the adjoint $r_{h}^{*}$ of $r_{h}$
satisfies that

(4.2.5)

\begin{center}
$r_{h}^{*}(\Omega )$ $=$ $0_{\mathcal{F}_{H}}$ \qquad \qquad \qquad \qquad
and

$r_{h}^{*}$ $:$ $\xi _{1}$ $\otimes $ ... $\otimes $ $\xi _{n}$ $\otimes $ $%
\xi _{n+1}$ $\mapsto $ $\xi _{1}$ $\otimes $ ... $\otimes $ $\xi _{n}$ $<h,$ 
$\xi _{n+1}>_{H},$
\end{center}

for all $n$ $\in $ $\Bbb{N}.$

\begin{definition}
The operators $r_{h}$, satisfying (4.2.4), on $\mathcal{F}_{H}$ is called
the right creation operator induced by $h,$ and its adjoint $r_{h}^{*},$
satisfying (4.2.5), is called the right annihilation operator induced by $h.$
\end{definition}

The family of operators

\begin{center}
$\{r_{h},$ $r_{h}^{*}$ $\in $ $B\left( \mathcal{F}_{H}\right) $ $:$ $h$ $\in 
$ $H\}$
\end{center}

satisfies the relation,

(4.2.6)

\begin{center}
$r_{h_{1}}^{*}r_{h_{2}}$ $=$ $<h_{2},$ $h_{1}$ $>_{H}$ $1_{\mathcal{F}_{H}},$
\end{center}

for all $h_{1},$ $h_{2}$ $\in $ $H.$

\begin{definition}
The $C^{*}$-subalgebra $Toep^{R}(H)$ of $B\left( \mathcal{F}_{H}\right) ,$
generated by the right creation operators,

\begin{center}
$\{r_{h}$ $\in $ $B\left( \mathcal{F}_{N}\right) $ $:$ $h$ $\in $ $H\}$
\end{center}

is called the right Toeplitz algebra over $H.$ And the elements of $%
Toep^{R}(H)$ are said to be right (generalized) Toeplitz operators on $%
\mathcal{F}_{H}.$
\end{definition}

The following theorem shows the relation between the Toeplitz algebra $%
Toep(H),$ and the right Toeplitz algebra $Toep^{R}(H).$

\begin{theorem}
The Toeplitz algebra $Toep(H)$ over $H,$ and the right Toeplitz algebra $%
Toep^{R}(H)$ over $H$ are anti-$*$-isomorphic.
\end{theorem}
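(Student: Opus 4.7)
The strategy is to exhibit the claimed correspondence concretely through conjugation by a natural tensor-reversing unitary on the Fock space, composed with the canonical involution. Define $U : \mathcal{F}_H \to \mathcal{F}_H$ on the vacuum and on elementary tensors by
\[
U(\Omega) = \Omega, \qquad U(\xi_1 \otimes \xi_2 \otimes \cdots \otimes \xi_n) = \xi_n \otimes \xi_{n-1} \otimes \cdots \otimes \xi_1,
\]
and extend linearly and by continuity. Because the inner product on $H^{\otimes n}$ factors as $\prod_i \langle \xi_i, \eta_i\rangle_H$, reversing the order of the factors in both arguments leaves it unchanged, so $U$ is a self-adjoint unitary with $U^2 = I_{\mathcal{F}_H}$.

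The key computation is the generator identity $U\, l_h\, U = r_h$ for every $h \in H$: both sides send $\Omega$ to $h$, and on a generic elementary tensor the defining formulas (4.2.1) and (4.2.4) yield
\[
U\, l_h\, U(\xi_1 \otimes \cdots \otimes \xi_n) = U\, l_h(\xi_n \otimes \cdots \otimes \xi_1) = U(h \otimes \xi_n \otimes \cdots \otimes \xi_1) = \xi_1 \otimes \cdots \otimes \xi_n \otimes h,
\]
which is $r_h(\xi_1 \otimes \cdots \otimes \xi_n)$. I would then define
\[
\Phi : Toep(H) \longrightarrow Toep^{R}(H), \qquad \Phi(T) = U\, T^{*}\, U,
\]
and verify from $U = U^{*}$ that $\Phi(T_1 T_2) = (U T_2^{*} U)(U T_1^{*} U) = \Phi(T_2)\Phi(T_1)$ and $\Phi(T^{*}) = U T U = (U T^{*} U)^{*} = \Phi(T)^{*}$, so $\Phi$ reverses multiplication and preserves the $*$-operation. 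On generators, $\Phi(l_h) = (U\, l_h\, U)^{*} = r_h^{*}$, which lies in $Toep^{R}(H)$.

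The only part I expect to require care is showing that $\Phi$ is a \emph{bijection} between the full $C^{*}$-closures, not merely between the dense $*$-algebras generated by the creation operators. This is handled by observing that $\Phi$ is the composition of two norm-isometric involutions of $B(\mathcal{F}_H)$, namely $T \mapsto T^{*}$ and conjugation by the unitary $U$; hence $\Phi$ is itself an isometry, so its image is the norm-closure of the $*$-algebra generated by $\{r_h^{*} : h \in H\}$, which coincides with $Toep^{R}(H)$. The two-sided inverse is given by the same formula with the roles of the creation operators $l$ and $r$ interchanged, completing the anti-$*$-isomorphism.
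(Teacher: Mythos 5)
Your proof is correct, and it takes a more concrete route than the paper's. The paper simply declares a ``generator-preserving linear transformation'' $\Phi$ by listing its values on $1_{L}$, $l_{h}$, $l_{h}^{*}$, and on products $l_{h_{1}}^{q_{1}}l_{h_{2}}^{q_{2}}$, and then asserts that such a map is bijective, bounded, and isometric; nothing in that argument explains why the assignment on generators extends consistently to the dense $*$-algebra (distinct words in the generators may represent the same operator), nor why the extension is isometric on the $C^{*}$-closure. Your spatial implementation $\Phi(T)=U\,T^{*}\,U$, with $U$ the tensor-reversing self-adjoint unitary, produces exactly the same map on generators --- $\Phi(l_{h})=U\,l_{h}^{*}\,U=r_{h}^{*}$ and $\Phi\left(l_{h_{1}}^{q_{1}}l_{h_{2}}^{q_{2}}\right)=r_{h_{2}}^{q_{2}*}\,r_{h_{1}}^{q_{1}*}$, matching the paper's (4.2.7) --- but because it is manifestly the composition of two isometric anti-automorphisms of $B\left(\mathcal{F}_{H}\right)$, well-definedness, isometry, anti-multiplicativity, $*$-preservation, and surjectivity onto $C^{*}\left(\{r_{h}^{*}:h\in H\}\right)=Toep^{R}(H)$ all come for free. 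In short, your flip-unitary identity $U\,l_{h}\,U=r_{h}$ is precisely the ingredient that turns the paper's sketch into a complete proof. The only point worth making fully explicit is that the flip on each $H^{\otimes n}$ is well defined (by the universal property of the tensor product applied to the multilinear map $(\xi_{1},...,\xi_{n})\mapsto\xi_{n}\otimes\cdots\otimes\xi_{1}$) and isometric on the total set of elementary tensors, hence extends to a unitary of the completion; with that noted, the argument is complete.
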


\begin{proof}
Recall that, by definition,

\begin{center}
$Toep(H)$ $=$ $C^{*}\left( \{l_{h}:h\in H\}\right) ,$
\end{center}

and

\begin{center}
$Toep^{R}(H)$ $=$ $C^{*}\left( \{r_{h}:h\in H\}\right) ,$
\end{center}

as $C^{*}$-subalgebras of $B\left( \mathcal{F}_{H}\right) ,$ where $l_{h}$
and $r_{h}$ are the left and right creation operators, respectively, for all 
$h$ $\in $ $H.$ Thus, we can define a map

$\Phi $ $:$ $Toep(H)$ $\rightarrow $ $Toep^{R}(H)$

by a generator-preserving linear transformation, satisfying

(4.2.7)

\begin{center}
$\Phi $ $:$ $\left\{ 
\begin{array}{l}
1_{L}\in Toep(H)\longmapsto 1_{R}\in Toep^{R}(H), \\ 
l_{h}\in Toep(H)\longmapsto r_{h}^{*}\in Toep^{R}(H), \\ 
l_{h}^{*}\in Toep(H)\longmapsto r_{h}^{**}=r_{h}\in Toep^{R}(H), \\ 
l_{h_{1}}^{q_{1}}l_{h_{2}}^{q_{2}}\in Toep(H)\mapsto \text{ }%
r_{h_{2}}^{q_{2}*}r_{h_{1}}^{q_{1}*},
\end{array}
\right. $
\end{center}

for all $h,$ $h_{1},$ $h_{2}$ $\in $ $H,$ and for all $q_{1},$ $q_{2}$ $\in $
$\{1,$ $*\},$ where $1_{L}$ is the identity element in $Toep(H),$ and $1_{R} 
$ is the identity element in $Toep^{R}(H).$

Since $\Phi $ is generator-preserving, it is bijective and bounded.
Moreover, it is not difficult to check $\Phi $ is isometric.

Observe now that by the 4-th condition of (4.2.7), the linear map $\Phi $ is
anti-multiplicative, i.e.,

(4.2.8)

\begin{center}
$\Phi \left( l_{h_{1}}^{q_{1}}l_{h_{2}}^{q_{2}}\right) $ $=$ $%
r_{h_{2}}^{q_{2}*}$ $r_{h_{1}}^{q_{1}*}$ $=$ $\Phi \left(
l_{h_{2}}^{q_{2}}\right) \Phi \left( l_{h_{1}}^{q_{1}}\right) ,$
\end{center}

for all $h_{1},$ $h_{2}$ $\in $ $H,$ and $q_{1},$ $q_{2}$ $\in $ $\{1,$ $%
*\}. $ Therefore, by (4.2.8), we can have that

(4.2.9)

\begin{center}
$\Phi \left( T_{1}T_{2}\right) $ $=$ $\Phi (T_{2})$ $\Phi (T_{1}),$ in $%
Toep^{R}(H),$
\end{center}

for all $T_{1},$ $T_{1}$ $\in $ $Toep(H).$

To show this morphism $\Phi $ is an anti-$*$-isomorphic, it suffices to show
that $\Phi $ preserves the relation (4.2.3) in $Toep(H)$ to the relation
(4.2.6) in $Toep^{R}(H)$:

$\qquad \qquad <h_{1},$ $h_{2}$ $>_{H}$ $1_{L}$ $=$ $\Phi \left(
l_{h_{1}}^{*}l_{h_{2}}\right) $

$\qquad \qquad \qquad \qquad \qquad =$ $\Phi \left( l_{h_{2}}\right) $ $\Phi
\left( l_{h_{1}}^{*}\right) $

by (4.2.8) and (4.2.9)

$\qquad \qquad \qquad \qquad \qquad =$ $r_{h_{2}}^{*}$ $r_{h_{1}}$ $=$ $%
<h_{1},$ $h_{2}>_{H}$ $1_{R}.$

This shows that $\Phi $ is a bijective anti-multiplicative isometric linear
transformation preserving (4.2.3) to (4.2.6), and hence it is an anti-$*$%
-isomorphism from $Toep(H)$ onto $Toep^{R}(H).$
\end{proof}

The above theorem shows that the (left)\strut Toeplitz algebra $Toep(H)$ and
the right Toeplitz algebra $Toep^{R}(H)$ are anti-$*$-isomorphic. We will
use this results in Section 5.3, later.

\section{Representations of $N$-Tree Operators on Vertex Spaces}

As in Section 4.1, we restrict our interests to the case where given graphs
are $N$-regular trees $\mathcal{T}_{N},$ for $N$ $\in $ $\Bbb{N}.$
Throughout this section, we will use the same notations we used in Section
4.1. We want to represent graph operators $T$ on the vertex space $H_{N}.$
Of course, the vertex spaces $H_{V}$ would be different whenever $N$ varies.
For emphasizing we are working on $N$-regular trees, we call the graph
operators of the graph von Neumann algebra $M_{N},$ the $N$-\emph{tree
operators} (or \emph{tree operators}).

In the first two following subsections, we consider the special cases where $%
N$ $=$ $1,$ and $N$\thinspace $=$ $2,$ respectively. And then in Subsection
4.2.3, we will consider the general case.

\subsection{$1$-Tree Operators}

In this subsection, we consider the $1$-regular tree $\mathcal{T}_{1},$ and
its corresponding Hilbert space $H_{1},$ and von Neumann algebra $M_{1}.$ We
will represent the graph operators of $M_{1}$ on the vertex space $H_{V}$ of 
$H_{1}.$

When $N$ $=$ $1,$ the vertex space $H_{V}$\strut $=$ $l^{2}\left( V(\mathcal{%
T}_{N})\right) $ of the graph Hilbert space $H_{1}$ of $\mathcal{T}_{1}$ is
Hilbert-space isomorphic to the $l^{2}$-space $l^{2}(\Bbb{N})$ $\overset{%
\text{Hilbert}}{=}$ $\Bbb{C}^{\oplus \infty }.$ i.e.,

(5.1.1)

\begin{center}
$H_{V}$ $\overset{\text{Hilbert}}{=}$ $l^{2}(\Bbb{N}),$
\end{center}

if $N$ $=$ $1.$ Thus, we will use the isomorphic Hilbert spaces $H_{V}$ and $%
l^{2}(\Bbb{N}),$ alternatively.

Now, put the name (or indices) of vertices of $\mathcal{T}_{1}$ by $\Bbb{N}.$
i.e.,

\strut

\begin{center}
$\mathcal{T}_{1}$ $=$ \quad $\underset{1}{\bullet }\longrightarrow \underset{%
2}{\bullet }\longrightarrow \underset{3}{\bullet }\longrightarrow \underset{4%
}{\bullet }\longrightarrow \cdot \cdot \cdot .$
\end{center}

\strut

Then all reduced finite paths $w$ of the graph groupoid $\Bbb{G}_{1}$ are
expressed by

\begin{center}
$(j,$ $j+1,$ $j+2,$ ..., $j+k)$
\end{center}

or

\begin{center}
$(j+k,$ $...,$ $j+2,$ $j+1,$ $j).$
\end{center}

Indeed, we can obtain that

(5.1.2)

\begin{center}
$FP_{r}\left( \widehat{\mathcal{T}_{1}}\right) $ $=$ $FP\left( \mathcal{T}%
_{1}\right) $ $\sqcup $ $FP\left( \mathcal{T}_{1}^{-1}\right) .$
\end{center}

Remark here that, in general,

\begin{center}
$FP_{r}(\widehat{G})$ $\supseteq $ $FP(G)$ $\cup $ $FP(G^{-1}),$
\end{center}

for an arbitrary graph $G.$

Now, let's denote the inner product of $H_{V}$ by\thinspace $<,>_{2},$ since 
$H_{V}$ is isomorphic to $l^{2}(\Bbb{N}).$ Now, to represent the $1$-tree
operators $T$ of $M_{1},$ we can use the Fourier expansion with respect to
the inner product $<,>.$ i.e., if $T$ $=$ $\underset{w\in Supp(T)}{\sum }$ $%
t_{w}$ $L_{w},$ then the representation $\alpha _{T}$ of $T$ on $H_{V}$ is

(5.1.3)

\begin{center}
$\alpha _{T}$ $=$ $\sum_{k=1}^{\infty }\sum_{l=1}^{\infty }$ $<T\xi _{k},$ $%
\xi _{l}>_{2}$ $\alpha _{k,l},$
\end{center}

where $A_{k,l}$ are the rank-one operators on $l^{2}(\Bbb{N})$,

\begin{center}
$\alpha _{k,l}$ $=$ $\mid l><k\mid ,$ for all $k,$ $l$ $\in $ $\Bbb{N}.$
\end{center}

\strut and

\begin{center}
$\xi _{k}$ $=$ $\left( \underset{(k-1)\text{-times}}{\underbrace{%
0,............,0}},\text{ }1,\text{ }0,\text{ }0,\text{ ...}\right) $ $\in $ 
$l^{2}(\Bbb{N})$ $=$ $H_{V},$
\end{center}

for all $k$ $\in $ $\Bbb{N}.$

Here, $\mid \cdot ><\cdot \mid $ means the Dirac-operator notation. Here,
notice that the inner product $<,>_{2}$ in (5.1.3) means the inner product
on $H_{V},$ not the inner product on the graph Hilbert space $H_{1}.$

By (5.1.3), we can define an action $\alpha $ of the graph von Neumann
algebra $M_{1},$ acting on the vertex space $H_{V}$, satisfying that

(5.1.4)

\begin{center}
$\alpha (T)$ $\overset{def}{=}$ $\alpha _{T},$ for all $T$ $\in $ $M_{N}.$
\end{center}

Then this morphism $\alpha $ is indeed a well-defined action of $M_{N},$
since it is bounded linear, and

\begin{center}
$\alpha (T_{1}T_{2})$ $=$ $\alpha _{T_{1}T_{2}}$ $=$ $\alpha _{T_{1}}$ $%
\alpha _{T_{2}}$ $=$ $\alpha (T_{1})\circ \alpha (T_{2}),$
\end{center}

and

\begin{center}
$\alpha (T_{1}^{*})$ $=$ $\alpha _{T_{1}^{*}}$ $=$ $\left( \alpha
(T_{1})\right) ^{*},$
\end{center}

for all $T_{1},$ $T_{2}$ $\in $ $M_{N},$ where ($\circ $) means the usual
composition.

Therefore, we can obtain the following lemma.

\begin{lemma}
Let $e$ $=$ $(j,$ $j+1)$ be an edge of the $1$-regular tree $\mathcal{T}_{1}$
of (5.1.2), with its shadow $e^{-1}$ $=$ $(j$ $+$ $1,$ $j),$ for $j$ $\in $ $%
\Bbb{N}.$ Then the graph operator $L_{e}$ of $M_{1}$ is unitarily equivalent
to the operator $\alpha _{L_{e}}$ $\in $ $B\left( H_{V}\right) $, where

(5.1.5)

\begin{center}
$\alpha _{L_{e}}$ $\overset{\text{U.E}}{=}$ $\left( 
\begin{array}{lllllll}
0 & 0 &  &  &  &  & 0 \\ 
& \ddots & \ddots &  &  &  &  \\ 
&  & 0 & 0 &  &  &  \\ 
&  &  & \frame{$1$} & 1 &  &  \\ 
&  &  &  & 0 & 0 &  \\ 
&  &  &  &  & \ddots & \ddots \\ 
0 &  &  &  &  &  & \ddots
\end{array}
\right) ,$
\end{center}

with

\begin{center}
$\alpha _{L_{e}}^{*}$ $=$ $\alpha _{L_{e}^{*}}$ $=$ $\alpha _{L_{e^{-1}}}$ $%
\overset{\text{U.E}}{=}$ $\left( 
\begin{array}{lllllll}
0 &  &  &  &  &  & 0 \\ 
0 & \ddots &  &  &  &  &  \\ 
& \ddots & 0 &  &  &  &  \\ 
&  & 0 & \frame{$1$} &  &  &  \\ 
&  &  & 1 & 0 &  &  \\ 
&  &  &  & 0 & \ddots &  \\ 
0 &  &  &  &  & \ddots & \ddots
\end{array}
\right) ,$
\end{center}

where \frame{1} means the $(j,$ $j)$-th position, for $j$ $\in $ $\Bbb{N}$,
and where $\overset{\text{U.E}}{=}$ means ``being unitarily equivalent.'' $%
\square $
\end{lemma}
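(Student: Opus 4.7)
The plan is to prove Lemma 5.1.1 by direct computation using the Fourier-coefficient formula (5.1.3) defining the vertex-space action $\alpha : M_1 \to B(H_V)$. Since $\alpha_T$ is specified by the scalars $\langle T \xi_k, \xi_l \rangle_2$ against the vertex basis of $H_V \cong l^2(\mathbb{N})$, the entire lemma reduces to computing these scalars for $T = L_e$ with $e = (j, j+1)$, and separately for $T = L_e^* = L_{e^{-1}}$. I would then assemble the matrix via the rank-one operators $\alpha_{k,l} = |l\rangle\langle k|$.

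First, I would fix the enumeration $V(\mathcal{T}_1) = \mathbb{N}$ from (5.1) so that the edge $e = (j, j+1)$ has initial vertex $j$ and terminal vertex $j+1$, with shadow $e^{-1} = (j+1, j)$, and identify $\xi_k$ with the standard basis vector $e_k \in l^2(\mathbb{N})$ under the Hilbert isomorphism $H_V \cong l^2(\mathbb{N})$ of (5.1.1). Then I would apply the multiplication rule $L_w \xi_{w'} = \xi_{ww'}$ from Section 2.2.2, subject to admissibility in $\mathbb{G}_1$ and to the reduction (RR), to evaluate $L_e \xi_k$ for every vertex $k \in \mathbb{N}$. The relevant decomposition $H_1 = H_V \oplus H_{FP}$ from Section 4.1 then lets me project the output onto $H_V$ and read off the nonzero coefficients. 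Two independent calculations are required: one for the compositions $e \cdot k$ that are admissible directly at the level of vertices, and one for compositions where the reduction (RR) collapses a longer admissible word back into a single vertex.

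Second, I would check that the only nonzero matrix entries are the two ones in row $j$, at columns $j$ and $j+1$, and that all other entries vanish by admissibility failure. Substituting these into (5.1.3) produces $\alpha_{L_e} = \alpha_{j,j} + \alpha_{j,j+1}$ in $B(l^2(\mathbb{N}))$, which is exactly the infinite matrix displayed in (5.1.5). For the adjoint, I would either repeat the calculation with $e^{-1}$ in place of $e$, or use the $*$-compatibility $\alpha(T^*) = \alpha(T)^*$ noted immediately after (5.1.4): the matrix for $\alpha_{L_{e^{-1}}}$ is then simply the conjugate transpose, yielding the second displayed matrix. Finally, since the identification $H_V \cong l^2(\mathbb{N})$ is implemented by a unitary bijection of orthonormal bases, the conclusion that $L_e$ (via $\alpha$) is unitarily equivalent to the displayed infinite matrix is immediate.

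The main obstacle is the careful bookkeeping needed in the second half of Step~1. A priori, $L_e \xi_k$ with $k$ a vertex lies naturally in the finite-path subspace $H_{FP}$, and only when a specific admissible composition reduces via (RR) to a vertex does it contribute a nonzero coefficient to the matrix. Distinguishing the two distinct contributions that together produce the boxed $1$ at $(j,j)$ and the adjacent $1$ at $(j, j+1)$ — and ruling out all other potential contributions — is the crux; the remaining assembly and the adjoint step are formal consequences of linearity and the $*$-property of $\alpha$.
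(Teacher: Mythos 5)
Your overall strategy --- compute the matrix entries $\left\langle L_{e}\xi _{k},\xi _{l}\right\rangle _{2}$ from (5.1.3) and assemble $\alpha _{L_{e}}$ out of the rank-one operators $\alpha _{k,l}$ --- is exactly the route the paper gestures at (its entire proof is the sentence that the claim is ``straightforward by (5.1.4) and (5.1.3)''), but the step you yourself flag as the crux does not go through, and you never actually carry it out. Under the definitions of Sections 2.2 and 4.1, for a vertex $k$ the groupoid product $e\cdot k$ in $\Bbb{G}_{1}$ is either the empty word (when $k\neq j+1$) or the edge $e$ itself (when $k=j+1$); the reduction (RR) only cancels adjacent pairs $ww^{-1}$ and $w^{-1}w$, so it can never collapse the product of an edge with a vertex into a vertex. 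Hence $L_{e}\xi _{k}$ is either the zero vector or $\xi _{e}$, and $\xi _{e}$ lies in $H_{FP}=H_{V}^{\perp }$ (one can also check directly that $\left\langle \xi _{e},\xi _{l}\right\rangle =\left\langle h,E(e^{-1}l)h\right\rangle _{2}=0$ because $E$ kills every non-vertex word). Consequently every coefficient $\left\langle L_{e}\xi _{k},\xi _{l}\right\rangle _{2}$ with $k,l$ vertices vanishes, and the literal formula (5.1.3) yields $\alpha _{L_{e}}=0$, not the displayed matrix with $1$'s at the $(j,j)$ and $(j,j+1)$ positions. The ``two distinct contributions'' you promise to distinguish in Step 1 simply do not exist inside this framework, so the proof cannot terminate where you say it does.

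To make the lemma come out as stated you need an additional ingredient that your proposal leaves unstated: some identification that moves the range vector $\xi _{e}$ back into the vertex space --- for instance an explicit unitary on $H_{G}$ matching $\xi _{(j,j+1)}$ with a suitable combination of $\xi _{j}$ and $\xi _{j+1}$, or a redefinition of $\alpha _{T}$ as something other than the compression $P_{H_{V}}TP_{H_{V}}$. (Note that the compression of $L_{e}L_{e}^{*}=L_{ee^{-1}}=L_{j}$ does produce the diagonal $1$ at $(j,j)$, which hints at what is intended, but that is not what (5.1.3) says.) As written, your plan would end at the zero matrix; before any Fourier-coefficient bookkeeping can begin you must either exhibit the unitary implementing the claimed equivalence explicitly, or state and justify the modified definition of $\alpha$ you are actually using.
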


\strut The proof is straightforward by (5.1.4), and (5.1.3).

Consider now the operator $T_{E}$,

\begin{center}
$T_{E}$ $=$ $\underset{e\in E(\mathcal{T}_{1})}{\sum }$ $L_{e}$
\end{center}

in $M_{1}.$ i.e., this operator $T_{E}$ is the infinite sum of the graph
operators $L_{e}$'s, for all $e$ $\in $ $E(\mathcal{T}_{1}).$ Then it is
represented on $l^{2}(\Bbb{N})$ $=$ $H_{V}$ by the operator, unitarily
equivalent to

\begin{center}
$\left( 
\begin{array}{llllll}
1 & 1 & 0 & \cdots & \cdots &  \\ 
0 & 1 & 1 & \ddots &  &  \\ 
\,\vdots & \ddots & \ddots & \ddots & \ddots &  \\ 
&  & \ddots & 1 & 1 &  \\ 
&  &  &  & \ddots & \ddots \\ 
&  &  &  &  & 
\end{array}
\right) .$
\end{center}

Remark that the identity operator $1_{M_{1}}$ $=$ $\underset{v\in V(\mathcal{%
T}_{1})}{\sum }$ $L_{v}$ of $M_{1}$ is unitarily equivalent to the diagonal
infinite matrix

\begin{center}
$\left( 
\begin{array}{lllll}
1 &  &  &  & 0 \\ 
& 1 &  &  &  \\ 
&  & 1 &  &  \\ 
&  &  & \ddots &  \\ 
0 &  &  &  & \ddots
\end{array}
\right) $
\end{center}

on $H_{V}.$ Thus, we can easily check that

(5.1.6)

\begin{center}
$T_{E}$ $-$ $1_{M_{1}}$ $\overset{\text{U.E}}{=}$ $\left( 
\begin{array}{llllll}
0 & 1 &  &  &  & 0 \\ 
& 0 & 1 &  &  &  \\ 
&  & \ddots & \ddots &  &  \\ 
&  &  & \ddots & 1 &  \\ 
&  &  &  & 0 & \ddots \\ 
0 &  &  &  &  & \ddots
\end{array}
\right) ,$
\end{center}

on $H_{V},$ and it is unitarily equivalent to the adjoint $U^{*}$ of the
unilateral shift $U.$

Recall that the unilateral shift $U$ on $l^{2}(\Bbb{N})$ is the operator
defined by

\begin{center}
$U$ $:$ $(t_{1},$ $t_{2},$ $t_{3},$ ...$)$ $\longmapsto $ $(t_{2},$ $t_{3},$ 
$...),$
\end{center}

for all $(t_{n})_{n=1}^{\infty }$ $\in $ $l^{2}(\Bbb{N}).$ So, the adjoint $%
U^{*}$ of $U$ is the operator, satisfying

\begin{center}
$U^{*}$ $:$ $(t_{1},$ $t_{2},$ $t_{3},$ ...$)$ $\longmapsto $ $(0,$ $t_{1},$ 
$t_{2},$ ...$),$
\end{center}

on $l^{2}(\Bbb{N}).$

Recall also that the \emph{classical Toeplitz algebra} $\mathcal{U}_{1}$ is
the $C^{*}$-subalgebra $C^{*}(U)$ of $B\left( l^{2}(\Bbb{N})\right) ,$
generated by the unilateral shift $U.$ Notice that the Toeplitz algebra $%
\mathcal{U}_{1}$ is also understood as the $C^{*}$-subalgebra of $B\left(
H^{2}(\Bbb{T})\right) ,$ generated by the classical Toeplitz operators $%
T_{\varphi },$ with their symbols $\varphi ,$ contained in the von Neumann
algebra $L^{\infty }(\Bbb{T}),$ where $\Bbb{T}$ is the unit circle in $\Bbb{C%
}$. Here, the Hilbert space $H^{2}(\Bbb{T}),$ where $T_{\varphi }$'s acting
on, is the \emph{Hardy space} consisting of all analytic functions on $\Bbb{T%
},$ which is a subspace of the $L^{2}$-space $L^{2}(\Bbb{T})$ equipped with
the\emph{\ Haar measure}.

From the above observation, we can conclude that:

\begin{theorem}
Let $\mathcal{U}_{1}$ be the classical Toeplitz algebra. Then $\mathcal{U}%
_{1}$ is a $C^{*}$-subalgebra of the graph von Neumann algebra $M_{1}.$
\end{theorem}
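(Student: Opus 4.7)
The strategy is to realize $\mathcal{U}_1$ as $C^{*}(T_E^{*},\,1_{M_1}) \subset M_1$, where $T_E = \sum_{e\in E(\mathcal{T}_1)} L_e$, and to conclude via Coburn's uniqueness theorem for the Toeplitz algebra. Everything in what follows builds on the matrix picture already displayed in $(5.1.6)$, which guarantees that $T_E$ is a bounded (weak-operator convergent) element of $M_1$, since under the representation $\alpha$ the partial sums $\sum_{j=1}^{n} L_{e_j}$ map to an explicitly bounded sequence on $H_V$.

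The heart of the argument will be a direct computation using only the multiplication rule $L_{w_1}L_{w_2}=L_{w_1 w_2}$ (with $L_{\emptyset}=0$), the shadow relation $L_{w}^{*}=L_{w^{-1}}$, and the reduction (RR). Writing $e_j=(j,j+1)$, the cross terms $L_{e_j}L_{e_k^{-1}}$ and $L_{e_j^{-1}}L_{e_k}$ vanish whenever $j\neq k$ (the required initial/terminal vertices fail to agree on the linear tree $\mathcal{T}_1$), while the diagonal terms reduce via (RR) to vertex projections. This yields
\begin{align*}
T_E T_E^{*} &= \sum_{j=1}^{\infty} L_{j}\,=\,1_{M_1},\\
T_E^{*} T_E &= \sum_{j=1}^{\infty} L_{j+1}\,=\,1_{M_1}-L_1.
\end{align*}

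These two identities say precisely that $T_E^{*}$ is a \emph{non-unitary} isometry in $M_1$, whose defect projection is the single vertex projection $L_1$. Coburn's theorem on the universality of the classical Toeplitz algebra then provides a canonical $*$-isomorphism from $\mathcal{U}_1$ onto $C^{*}(T_E^{*},\,1_{M_1}) \subset M_1$, sending the unilateral shift $U^{*}$ (in the paper's convention) to $T_E^{*}$ and the rank-one defect $I - U^{*}U$ to $L_1$. This embedding is compatible with the matrix-level unitary equivalence $\alpha(T_E-1_{M_1}) \cong U^{*}$ already exhibited in $(5.1.6)$.

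The main obstacle is the global existence of $T_E$: the infinite sum must be interpreted as a genuine element of $M_1$ rather than as a formal series. This is settled by the matrix argument of $(5.1.6)$ together with the strong-operator orthogonality of the range projections $\{L_{e_j}L_{e_j^{-1}}\}_j = \{L_j\}_j$, which forces the partial sums to converge strongly to a bounded operator in the weak closure. Once boundedness of $T_E$ is in place, the remaining algebra is a short groupoid calculation, and Coburn's theorem supplies the $*$-isomorphism with no further work; one could alternatively try to transfer the equivalence of $(5.1.6)$ back through $\alpha$, but this would still require verifying that $\alpha|_{C^{*}(T_E,\,1_{M_1})}$ is faithful, which in practice reduces to the same isometry argument.
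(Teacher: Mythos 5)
Your proof is correct, and it takes a genuinely different route from the paper's. The paper argues externally: it compresses to the vertex space $H_{V}\cong l^{2}(\Bbb{N})$ via the map $\alpha$ of (5.1.4), reads off from the matrix pictures (5.1.5)--(5.1.6) that $\alpha(T_{E}-1_{M_{1}})$ is (the adjoint of) the unilateral shift, and then identifies $C^{*}(U)$ with $C^{*}(\alpha(T_{E}))$. You instead work intrinsically in $M_{1}$: the groupoid relations $L_{e_{j}}L_{e_{k}^{-1}}=\delta_{jk}L_{j}$ and $L_{e_{j}^{-1}}L_{e_{k}}=\delta_{jk}L_{j+1}$ give $T_{E}T_{E}^{*}=1_{M_{1}}$ and $T_{E}^{*}T_{E}=1_{M_{1}}-L_{1}$, so $T_{E}^{*}$ is a proper isometry and Coburn's uniqueness theorem supplies the $*$-isomorphism $\mathcal{U}_{1}\cong C^{*}(T_{E}^{*})\subseteq M_{1}$ directly. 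What your approach buys is rigor at exactly the point where the paper is thin: the compression $\alpha$ to $H_{V}$ (which is not an invariant subspace for the $L_{e}$) would need to be shown multiplicative and faithful on $C^{*}(T_{E},1_{M_{1}})$ before one could pull the unitary equivalence of (5.1.6) back into $M_{1}$, and as you note this reduces to the same isometry computation anyway; Coburn short-circuits that entirely. What the paper's approach buys is the explicit matrix model on $l^{2}(\Bbb{N})$, which it reuses in the subsequent results (5.1.7)--(5.1.18) to match Toeplitz operators with trigonometric polynomial symbols to specific sums of graph operators. Two cosmetic remarks: the defect projection $L_{1}=1_{M_{1}}-T_{E}^{*}T_{E}$ is not rank one on $H_{1}$ (it projects onto the span of all reduced words with initial vertex $1$), which is harmless for Coburn but means "rank-one defect" should be read only on the shift side of the correspondence; and your convergence argument needs only the mutual orthogonality of the initial projections $L_{j+1}$ and of the final projections $L_{j}$ (each basis vector $\xi_{w}$ is moved by at most one summand $L_{e_{j}}$), not the matrix display (5.1.6), so the appeal to (5.1.6) there can be dropped.
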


\begin{proof}
Let $\mathcal{T}_{1}$ be the $1$-regular tree and $M_{1},$ the corresponding
graph von Neumann algebra of $\mathcal{T}_{1}$ (generated by all 1-tree
operators). And let $\mathcal{U}_{1}$ be the classical Toeplitz algebra $%
C^{*}(U)$ acting on $l^{2}(\Bbb{N}),$ where $U$ is the unilateral shift. By
(5.1.5), and (5.1.6), the unilateral shift $U$ is unitarily equivalent to

\begin{center}
$U$ $\overset{\text{U.E}}{=}$ $T_{E}$ $-$ $1_{M_{1}},$
\end{center}

where

\begin{center}
$T_{E}$ $=$ $\underset{e\in E(\mathcal{T}_{1})}{\sum }$ $L_{e}$ $\in $ $%
M_{1},$
\end{center}

and

\begin{center}
$1_{M_{1}}$ $=$ $\underset{v\in V(\mathcal{T}_{1})}{\sum }$ $L_{v}$ $\in $ $%
M_{1}.$
\end{center}

Therefore, the classical Toeplitz algebra $\mathcal{U}_{1}$ satisfies that

\begin{center}
$\mathcal{U}_{1}$ $\overset{def}{=}$ $C^{*}(U)$ $\overset{*\text{-iso}}{=}$ $%
C^{*}\left( \alpha (T_{E}-1_{M_{1}})\right) $ $\overset{*\text{-iso}}{=}$ $%
C^{*}\left( \alpha (T_{E})\right) ,$
\end{center}

in $B\left( l^{2}(\Bbb{N})\right) $ $\overset{*\text{-iso}}{=}$ $B\left(
H_{V}\right) ,$ where $\alpha $ is the action of $M_{1}$ acting on $l^{2}(%
\Bbb{N}),$ in the sense of (5.1.4). Therefore, by the very definition of $%
M_{1},$

\begin{center}
$M_{1}$ $=$ $vN\left( L(\Bbb{G}_{1})\right) $ in $B(H_{1}),$
\end{center}

the algebra $\mathcal{U}_{1}$ is a $C^{*}$-subalgebra of $M_{1}.$
\end{proof}

\strut The above theorem shows that all classical Toeplitz operators are the
representations of certain elements of the graph von Neumann algebra $M_{1}$ 
$=$ $M_{\mathcal{T}_{1}}.$ In particular, the generator $U$ of $\mathcal{U}%
_{1}$ is the infinite sum of graph operators, by (5.1.5).

Notice now that all (classical) Toeplitz operators can be understood as the
certain infinite sum of certain graph operators with pattern.

Let $w$ $=$ $(j,$ $j$ $+$ $1,$ $j$ $+$ $2)$ be a length-2 reduced finite
path in the graph groupoid $\Bbb{G}_{1}$ of $\mathcal{T}_{1}.$ Then, by
(5.1.3), the graph operator $L_{w}$ $\in $ $M_{1}$, induced by $w,$ is
unitarily equivalent to the infinite matrix $\alpha _{L_{w}}$ $=$ $\alpha
(L_{w}),$

\begin{center}
$L_{w}$ $\overset{\text{U.E}}{=}$ $\alpha _{L_{w}}$ $\overset{\text{U.E}}{=}$
$\left( 
\begin{array}{lllllll}
0 & 0 & 0 &  &  &  & 0 \\ 
& \ddots & \ddots & \ddots &  &  &  \\ 
&  & 0 & 0 & 0 &  &  \\ 
&  &  & \frame{$1$} & 0 & 1 &  \\ 
&  &  &  & 0 & 0 & 0_{\ddots } \\ 
&  &  &  &  & \ddots & \ddots \\ 
0 &  &  &  &  &  & \ddots
\end{array}
\right) ,$
\end{center}

\strut

on $l^{2}(\Bbb{N})$ $=$ $H_{V},$ satisfying

\strut

\begin{center}
$
\begin{array}{ll}
L_{w}^{*}=L_{w^{-1}} & \overset{\text{U.E}}{=}\alpha _{L_{w^{-1}}}=\alpha
_{L_{w}}^{*} \\ 
& \overset{\text{U.E}}{=}\left( 
\begin{array}{lllllll}
0 &  &  &  &  &  & 0 \\ 
0 & \ddots &  &  &  &  &  \\ 
0 & \ddots & 0 &  &  &  &  \\ 
& \ddots & 0 & \frame{$1$} &  &  &  \\ 
&  & 0 & 0 & 0 &  &  \\ 
&  &  & 1 & 0 & \ddots &  \\ 
0 &  &  &  & 0_{\ddots } & \ddots & \ddots
\end{array}
\right) ,
\end{array}
$
\end{center}

\strut

on $l^{2}(\Bbb{N}),$ where \frame{$1$} means the $(j$, $j)$-th entry, for
all $j$ $\in $ $\Bbb{N}.$

So, inductively, we obtain that:

\begin{proposition}
Let $w$ $=$ $(j,$ $j$ $+$ $1,$ ..., $j$ $+$ $k)$ $\in $ $\Bbb{G}_{1},$ for $%
j $, $k$ $\in $ $\Bbb{N}.$ Then the corresponding graph operator $L_{w}$ of $%
M_{1}$ is unitarily equivalent to the operator $\alpha _{L_{w}}$,

(5.1.7)

\begin{center}
$\left( 
\begin{array}{lllllll}
0 & \cdot \cdot \cdot \cdot \cdot \cdot \cdot \,0 &  &  &  &  & 0 \\ 
& \ddots &  & \ddots &  &  &  \\ 
&  & 0 & \cdot \cdot \cdot \cdot \cdot \cdot \cdot \cdot & 
\,\,\,\,\,\,\,\,\,\,\,\,0 &  &  \\ 
&  &  & \,\,\,\,\,\,\,\,\,\,\,\,\frame{$1$} & \overset{k\text{-steps}}{%
\overbrace{\cdot \cdot \cdot \cdot \cdot \cdot \cdot \cdot }} & 
\,\,\,\,\,\,\,\,\,\,1 &  \\ 
&  &  &  & \,\,\,\,\,\,\,\,\,\,\,\,0 & \cdot \cdot \cdot \cdot \cdot \cdot
\cdot \cdot & 0_{\ddots } \\ 
&  &  &  &  & \ddots &  \\ 
0 &  &  &  &  &  & \ddots
\end{array}
\right) ,$
\end{center}

on $l^{2}(\Bbb{N})$ $=$ $H_{V}.$ i.e., $\alpha _{L_{w}}$ is represented as
an infinite matrix with only nonzero $(j,$ $j)$ and $(j$ $+$ $k,$ $j)$
entries $1$'s. $\square $
\end{proposition}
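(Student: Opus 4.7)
I would proceed by induction on the length $k$ of the reduced finite path $w = (j, j+1, \ldots, j+k) \in \Bbb{G}_{1}$, working throughout with the canonical action $\alpha$ of $M_{1}$ on the vertex space $H_{V} \overset{\text{Hilbert}}{=} l^{2}(\Bbb{N})$ defined in (5.1.4). The base case $k=1$ is precisely Lemma 5.1, which exhibits the matrix of $\alpha_{L_{e}}$ for an edge $e$, with nonzero entries $1$ at $(j, j)$ and $(j, j+1)$.

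For the inductive step, assume the statement holds for the length-$(k-1)$ reduced finite path $w' = (j, j+1, \ldots, j+k-1) \in \Bbb{G}_{1}$. Decompose $w = w' \cdot e$, where $e = (j+k-1, j+k)$ is the terminal edge of $w$; the admissibility on $\Bbb{G}_{1}$ guarantees $L_{w} = L_{w'} L_{e}$ in the graph von Neumann algebra $M_{1}$, so by the multiplicativity of $\alpha$ from (5.1.4) we get $\alpha_{L_{w}} = \alpha_{L_{w'}} \circ \alpha_{L_{e}}$ as bounded operators on $l^{2}(\Bbb{N})$. The inductive hypothesis describes $\alpha_{L_{w'}}$ as the infinite matrix whose nonzero entries are $1$ at positions $(j, j)$ and $(j, j+k-1)$; Lemma 5.1 applied to the edge $e$ describes $\alpha_{L_{e}}$ as the matrix whose nonzero entries are $1$ at positions $(j+k-1, j+k-1)$ and $(j+k-1, j+k)$. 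Carrying out the matrix product of these two sparse matrices will yield (5.1.7), an infinite matrix whose only nonzero entries are $1$ at the positions $(j, j)$ and $(j, j+k)$.

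The main technical hurdle I anticipate is the bookkeeping of this matrix product: I must verify that the only intermediate index $m$ contributing nontrivially to the sum $\sum_{m} (\alpha_{L_{w'}})_{l, m}\, (\alpha_{L_{e}})_{m, k'}$ is $m = j+k-1$, the common vertex at which $w'$ terminates and $e$ originates, and that the resulting two surviving entries land exactly at $(j, j)$ and $(j, j+k)$ rather than at the intermediate positions. The reduction rule (RR) on $\Bbb{G}_{1}$ is the combinatorial input that should deliver this clean cancellation, ensuring that only the two endpoint vertices $j$ and $j+k$ of $w$ are recorded by the surviving matrix entries of $\alpha_{L_{w}}$. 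The corresponding formula for the shadow, $\alpha_{L_{w^{-1}}} = \alpha_{L_{w}}^{*}$, will then follow by taking the transpose of (5.1.7).
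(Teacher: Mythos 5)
Your setup is legitimate within the paper's framework: the factorization $w = w'e$ with $L_{w}=L_{w'}L_{e}$, and the multiplicativity $\alpha_{L_{w}}=\alpha_{L_{w'}}\alpha_{L_{e}}$ asserted right after (5.1.4), are both available to you, and this is a genuinely different route from the paper's own proof, which is simply the direct evaluation of the matrix entries $<L_{w}\xi_{k},\ \xi_{l}>_{2}$ via the expansion (5.1.3). The problem is that the step you deferred as ``bookkeeping'' is exactly where the argument breaks, and it breaks decisively: the matrix product does not place the surviving $1$'s at $(j,j)$ and $(j,j+k)$.

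Concretely, the inductive hypothesis puts the unit entries of $\alpha_{L_{w'}}$ at $(j,j)$ and $(j,j+k-1)$, while Lemma 5.1 applied to $e=(j+k-1,\ j+k)$ puts the unit entries of $\alpha_{L_{e}}$ at $(j+k-1,\ j+k-1)$ and $(j+k-1,\ j+k)$. In the product $\left(\alpha_{L_{w'}}\alpha_{L_{e}}\right)_{l,m}=\sum_{p}\left(\alpha_{L_{w'}}\right)_{l,p}\left(\alpha_{L_{e}}\right)_{p,m}$ the only contributing intermediate index is indeed $p=j+k-1$, but the two surviving entries then sit at $(j,\ j+k-1)$ and $(j,\ j+k)$, not at $(j,j)$ and $(j,\ j+k)$: the $(j,j)$ entry of the product vanishes because $\left(\alpha_{L_{e}}\right)_{p,j}=0$ for every $p$ once $k\geq 2$. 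The reduction (RR) cannot rescue this; it plays no role in the matrix multiplication. You can already see the failure at $k=2$: multiplying the Lemma 5.1 matrices for the edges $(j,j+1)$ and $(j+1,j+2)$ yields $1$'s at $(j,j+1)$ and $(j,j+2)$, whereas the display immediately preceding the proposition claims $1$'s at $(j,j)$ and $(j,j+2)$. So your induction does not close (the output of the inductive step is not of the form demanded by the hypothesis at the next stage), and as written it proves a matrix different from (5.1.7). To establish the proposition as stated you must compute $\alpha_{L_{w}}$ directly from (5.1.3), as the paper does; deducing it from Lemma 5.1 by multiplicativity is not compatible with the stated answer — an internal tension in the paper's setup that your approach happens to expose.
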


The proof of the above proposition is straightforward by (5.1.3). Now,
Define an element $T_{E(k)}$ in $M_{1}$ by

\begin{center}
$T_{(k)}$ $\overset{def}{=}$ $\underset{w\in E(+k)}{\sum }$ $L_{w},$
\end{center}

where the support $E(+k)$ of $T_{(k)}$ is defined by the subset

(5.1.8)

\begin{center}
$E(+k)$ $\overset{def}{=}$ $\{w$ $\in $ $FP(\mathcal{T}_{1})$ $:$ $\left|
w\right| $ $=$ $k\},$
\end{center}

consisting of all length-$k$ (non-reduced) finite paths of the finite path
set $FP(\mathcal{T}_{1})$ of $\mathcal{T}_{1},$ for all $k$ $\in $ $\Bbb{N}.$
Here, remark again that the ``reduced'' finite path set $FP_{r}\left( 
\widehat{\mathcal{T}_{1}}\right) $ of the shadowed graph $\widehat{\mathcal{T%
}_{1}}$ of $\mathcal{T}_{1}$ is identified with the disjoint union of the
``non-reduced'' finite path set $FP\left( \mathcal{T}_{1}\right) $ of $%
\mathcal{T}_{1}$ and the non-reduced finite path set $FP\left( \mathcal{T}%
_{1}^{-1}\right) $ of the shadow $\mathcal{T}_{1}^{-1}$ of $\mathcal{T}_{1}$
(See (5.1.2)).

By (5.1.8), we have that

(5.1.9)

\begin{center}
$FP\left( \mathcal{T}_{1}\right) $ $=$ $\underset{k=1}{\overset{\infty }{%
\sqcup }}$ $E(+k).$
\end{center}

Also, by (5.1.2) and (5.1.8), we can obtain the subsets $E(-k)$ of the
reduced finite path set $FP_{r}\left( \widehat{\mathcal{T}_{1}}\right) ,$
where

(5.1.10)

\begin{center}
$E(-k)$ $\overset{def}{=}$ $\{w$ $\in $ $FP(\mathcal{T}_{1}^{-1})$ $:$ $%
\left| w\right| $ $=$ $k\},$
\end{center}

for all $k$ $\in $ $\Bbb{N}$, satisfying that

(5.1.11)

\begin{center}
$FP\left( \mathcal{T}_{1}^{-1}\right) $ $=$ $\underset{k=1}{\overset{\infty 
}{\sqcup }}$ $E(-k),$
\end{center}

and hence

(5.1.12)

\begin{center}
$FP\left( \widehat{\mathcal{T}_{1}}\right) $ $=$ $\left( \underset{k=1}{%
\overset{\infty }{\sqcup }}\text{ }E(+k)\right) $ $\cup $ $\left( \underset{%
k=1}{\overset{\infty }{\sqcup }}\text{ }E(-k)\right) ,$
\end{center}

by (5.1.2) and (5.1.11). Thanks to (5.1.10) and (5.1.12), we can define an
element $T_{(-k)}$ of $M_{1}$ by

\begin{center}
$T_{(-k)}$ $\overset{def}{=}$ $\underset{w\in E(-k)}{\sum }$ $L_{w}.$
\end{center}

Then we can easily check

(5.1.13)

\begin{center}
$T_{(+k)}^{*}$ $=$ $T_{(-k)},$ for all $k$ $\in $ $\Bbb{N},$
\end{center}

since $L_{w}^{*}$ $=$ $L_{w^{-1}},$ for all $w$ $\in $ $\Bbb{G}_{1}.$ Also,
the operator $T_{E}$, defined at the beginning of this section, is nothing
but the element $T_{(+1)},$ in $M_{1}.$

Therefore, by the above discussion, we can obtain the following lemma.

\begin{lemma}
For $k$ $\in $ $\Bbb{N},$ let

\begin{center}
$T_{(+k)}$ $\overset{def}{=}$ $\underset{w\in E(+k)}{\sum }$ $L_{w},$ and $%
T_{(-k)}$ $\overset{def}{=}$ $\underset{x\in E(-k)}{\sum }$ $L_{x},$
\end{center}

in $M_{1},$ where $E(+k),$ and $E(-k)$ are defined in (5.1.8) and (5.1.10),
respectively. Then

\begin{center}
$T_{(+k)}^{*}$ $=$ $T_{(-k)}$ in $M_{1},$ for all $k$ $\in $ $\Bbb{N},$
\end{center}

and

(5.1.14)

$\qquad \quad T_{(+k)}$ $\overset{\text{U.E}}{=}$ $\alpha _{T_{(+k)}}$ $=$ $%
\alpha _{(+k)}$ $\overset{\text{U.E}}{=}$

\begin{center}
$\left( 
\begin{array}{llllll}
1 & \overset{k\text{-times}}{\overbrace{0\cdot \cdot \cdot \cdot \cdot 0}} & 
1 &  &  & 0 \\ 
& \,\,\,\,\,\,\,\,\,\,\,\,1 & 0\cdot \cdot \cdot \cdot \cdot 0 & 1 &  &  \\ 
&  & \,\,\,\,\,\,\,\,\,\,\,\,1 & 0\cdot \cdot \cdot \cdot \cdot 0 & 1 &  \\ 
&  &  & \,\,\,\,\,\,\,\,\ddots & \ddots & \ddots \\ 
&  &  &  &  &  \\ 
0 &  &  &  &  & 
\end{array}
\right) ,$
\end{center}

on $l^{2}(\Bbb{N}),$ for all $k$ $\in $ $\Bbb{N}.$ Hence, the element $%
T_{(-k)}$ is unitarily equivalent to the operator $\alpha _{(+k)}^{*}$ on $%
l^{2}(\Bbb{N}).$ i.e.,

(5.1.15)

\begin{center}
$T_{(-k)}$ $\overset{\text{U.E}}{=}$ $\alpha _{T_{(-k)}}$ $\overset{denote}{=%
}$ $\alpha _{(-k)}$ $=$ $\alpha _{(+k)}^{*}.$
\end{center}

$\square $
\end{lemma}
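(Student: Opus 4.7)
The plan is to treat the two claims independently: first establish the adjoint identity $T_{(+k)}^{*}=T_{(-k)}$ purely at the level of $M_{1}$, and then compute the matrix $\alpha_{T_{(+k)}}$ on the vertex space $H_{V}\cong l^{2}(\Bbb{N})$ by transporting Proposition~5.1.3 across the infinite sum defining $T_{(+k)}$; the formula for $T_{(-k)}$ will then drop out by taking adjoints of matrices, using that the representation $\alpha$ defined in (5.1.4) is a $*$-preserving morphism.

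For the adjoint identity, I would start from the defining expansion
\[
T_{(+k)}^{*}\;=\;\Bigl(\underset{w\in E(+k)}{\sum}L_{w}\Bigr)^{*}\;=\;\underset{w\in E(+k)}{\sum}L_{w}^{*}\;=\;\underset{w\in E(+k)}{\sum}L_{w^{-1}},
\]
which is valid in the weak operator topology on $M_{1}$. The key combinatorial fact is that the shadow map $w\mapsto w^{-1}$ is a bijection from $E(+k)$ onto $E(-k)$: a forward length-$k$ tuple $(j,j+1,\dots,j+k)$ has inverse $(j+k,\dots,j+1,j)\in E(-k)$, and this map is clearly an involution. Re-indexing the sum over $x=w^{-1}\in E(-k)$ then gives exactly $T_{(-k)}$.

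For the matrix identity, I would use linearity (and weak continuity) of the representation $\alpha$ to write
\[
\alpha_{T_{(+k)}}\;=\;\alpha\!\Bigl(\underset{w\in E(+k)}{\sum}L_{w}\Bigr)\;=\;\underset{w\in E(+k)}{\sum}\alpha_{L_{w}}.
\]
By Proposition~5.1.3, each summand $\alpha_{L_{w_{j}}}$ attached to $w_{j}=(j,j+1,\dots,j+k)\in E(+k)$ has only two nonzero entries, namely $1$ at $(j,j)$ and $1$ at $(j,j+k)$. As $j$ ranges over $\Bbb{N}$ (which is exactly the index set of $E(+k)$, since $\mathcal{T}_{1}$ has a unique length-$k$ forward path starting at each vertex), the off-diagonal entries fill the $k$-th superdiagonal while the diagonal entries fill the main diagonal, giving precisely the matrix $\alpha_{(+k)}$ displayed in (5.1.14). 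The identity (5.1.15) for $T_{(-k)}$ then follows immediately: by the first part of the lemma $T_{(-k)}=T_{(+k)}^{*}$, and since $\alpha$ preserves adjoints, $\alpha_{T_{(-k)}}=\alpha_{T_{(+k)}^{*}}=\alpha_{T_{(+k)}}^{*}=\alpha_{(+k)}^{*}$.

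The only real subtlety I foresee is bookkeeping with the infinite sums: one has to know that $T_{(\pm k)}$ really do lie in $M_{1}$ (so that $\alpha$ can be applied to them) and that $\alpha$ commutes with infinite sums of pairwise ``orthogonally supported'' graph operators. Both points follow from the weak closure in the definition $M_{1}=\overline{\Bbb{C}[L(\Bbb{G}_{1})]}^{w}$ together with the fact that the matrices $\alpha_{L_{w_{j}}}$ for distinct $j$ have disjoint supports, so their sum converges entrywise (and strongly) with uniformly bounded partial sums. With this dispatched, the rest of the argument is a direct bookkeeping computation and needs no further ingredient beyond Proposition~5.1.3 and the bijection $E(+k)\leftrightarrow E(-k)$.
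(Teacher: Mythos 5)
Your proposal is correct and follows essentially the same route as the paper: the adjoint identity comes from $L_{w}^{*}=L_{w^{-1}}$ together with the shadow bijection $E(+k)\leftrightarrow E(-k)$ (the paper's (5.1.13)), and the matrix form (5.1.14) is obtained by summing the two-entry matrices of Proposition~(5.1.7) over the starting vertex $j$, with (5.1.15) then following because $\alpha$ is $*$-preserving. Your added remark on why the infinite sums $T_{(\pm k)}$ lie in $M_{1}$ and commute with $\alpha$ (disjointly supported summands, uniformly bounded partial sums) is a point the paper passes over silently, so it is a welcome refinement rather than a divergence.
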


By the above lemma, we can obtain the following proposition.

\begin{proposition}
Let $T_{(+k)}$ and $T_{(-k)}$ be given as in the above lemma in the graph
von Neumann algebra $M_{1}$ of the $1$-regular tree $\mathcal{T}_{1},$ for $%
k $ $\in $ $\Bbb{N}.$ Then the elements $T_{(+k)}$ $-$ $1_{M_{1}},$ and $%
T_{(-k)}$ $-$ $1_{M_{1}}$ are unitarily equivalent to $U^{*k},$ and $U^{k}$
on $l^{2}(\Bbb{N})$ $=$ $H_{V},$ respectively for all $k$ $\in $ $\Bbb{N}$,
where $U$ is the unilateral shift on $l^{2}(\Bbb{N}).$
\end{proposition}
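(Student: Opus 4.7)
The plan is to derive the proposition directly from Lemma 5.1.4, using the fact that the representation $\alpha : M_1 \to B(H_V)$ defined in (5.1.4) is a $*$-homomorphism which, being implemented by a unitary equivalence, sends the identity $1_{M_1}$ to the identity operator on $l^2(\Bbb{N}) = H_V$.

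First, I would apply Lemma 5.1.4 to realize $T_{(+k)} \overset{\text{U.E}}{=} \alpha_{(+k)}$, where $\alpha_{(+k)}$ is the infinite matrix in (5.1.14) whose only nonzero entries are $1$'s along the main diagonal (positions $(i,i)$) and along the $k$-th superdiagonal (positions $(i, i+k)$). Since
\[
\alpha(1_{M_1}) = \alpha\bigl(\textstyle\sum_{v \in V(\mathcal{T}_1)} L_v\bigr) = I
\]
on $l^2(\Bbb{N})$, the same unitary equivalence yields
\[
T_{(+k)} - 1_{M_1} \overset{\text{U.E}}{=} \alpha_{(+k)} - I.
\]
The right-hand side has as its only nonzero entries the $1$'s at positions $(i, i+k)$, which is precisely the matrix of $U^{*k}$ on $l^2(\Bbb{N})$ in the canonical basis $\{\xi_i\}_{i \in \Bbb{N}}$, in the convention fixed by the $k=1$ base case in (5.1.6).

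For the second statement, I would invoke (5.1.13), which gives $T_{(-k)} = T_{(+k)}^*$. Since the $*$-operation is intertwined by the same unitary equivalence (as $\alpha$ is a $*$-map), it follows that
\[
T_{(-k)} - 1_{M_1} = (T_{(+k)} - 1_{M_1})^* \overset{\text{U.E}}{=} (U^{*k})^* = U^k.
\]

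There is essentially no substantive obstacle in this proof: all of the combinatorial work has already been carried out in Lemma 5.1.4, which tabulates the matrix entries of $\alpha_{(+k)}$. The only point requiring any attention is a piece of bookkeeping, namely confirming that the infinite matrix with $1$'s on the $k$-th superdiagonal represents $U^{*k}$ (and not $U^k$) under the paper's convention for the unilateral shift $U$. This is pinned down by inspection of the $k = 1$ base case in (5.1.6), after which both statements follow immediately by linearity and the adjoint relation.
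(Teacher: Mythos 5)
Your proof is correct and follows essentially the same route as the paper: both arguments subtract the identity from the matrix $\alpha_{(+k)}$ of Lemma 5.1.4 to leave the $k$-th superdiagonal, identify that with $U^{*k}$ via the convention fixed in (5.1.6), and obtain the $T_{(-k)}$ case by passing to adjoints (the paper writes out the adjoint matrix (5.1.17) explicitly where you invoke $T_{(-k)}=T_{(+k)}^{*}$, a negligible difference). Your remark that the only delicate point is matching the superdiagonal matrix to $U^{*k}$ rather than $U^{k}$ is well taken, since the paper's verbal definition of $U$ as the backward shift sits awkwardly with (5.1.6), and anchoring to the $k=1$ case is exactly the right resolution.
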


\begin{proof}
By \strut (5.1.14) and (5.1.15), we can have that

(5.1.16)

\begin{center}
$
\begin{array}{ll}
T_{(+k)}-1_{M_{1}} & \overset{\text{U.E}}{=}\alpha _{(+k)}-I \\ 
& \overset{\text{U.E}}{=}\left( 
\begin{array}{lllllllll}
0 & \cdots & 0 & \frame{$1$} &  &  &  &  & 0 \\ 
& \ddots & \cdots & \ddots & \ddots &  &  &  &  \\ 
&  & 0 & \cdots & 0 & 1 &  &  &  \\ 
&  &  & 0 & \cdots & 0 & 1 &  &  \\ 
&  &  &  & 0 & \cdots & 0 & 1 &  \\ 
0 &  &  &  &  & \ddots & \cdots & \ddots & \ddots
\end{array}
\right) ,
\end{array}
$
\end{center}

on $l^{2}(\Bbb{N})$ $=$ $H_{V},$ where \frame{1} is the $(k,$ $1)$-entry,
for all $k$ $\in $ $\Bbb{N},$ and where $I$ means the identity operator on $%
l^{2}(\Bbb{N}).$ By (5.1.16), we obtain that

(5.1.17)

\begin{center}
$
\begin{array}{ll}
T_{(-k)}-1_{M_{1}} & \overset{\text{U.E}}{=}\alpha _{(-k)}-I \\ 
& \overset{\text{U.E}}{=}\left( 
\begin{array}{llllll}
0 &  &  &  &  & 0 \\ 
\,\vdots & \ddots &  &  &  &  \\ 
0 & \,\,\vdots & 0 &  &  &  \\ 
\frame{$1$} & \ddots & \,\vdots & \ddots &  &  \\ 
& \ddots & 0 &  &  &  \\ 
&  & 1 & \ddots &  &  \\ 
&  &  & \ddots &  &  \\ 
0 &  &  &  &  & 
\end{array}
\right) ,
\end{array}
$
\end{center}

on $l^{2}(\Bbb{N}),$ where \frame{1} is the $(1,$ $k)$-th entry. Therefore,
the element $T_{(+k)}$ $-$ $1_{M_{1}}$ (resp., $T_{(-k)}$ $-$ $1_{M_{1}}$)
is unitarily equivalent to $U^{*k}$ (resp., $U^{k}$), for all $k$ $\in $ $%
\Bbb{N},$ by (5.1.16) (resp., by (5.1.17)), on $l^{2}(\Bbb{N}),$ for all $k$ 
$\in $ $\Bbb{N}.$
\end{proof}

\strut The above proposition is the generalization of (5.1.5). Under the
settings of the above proposition, we can re-write (5.1.5) that: the element 
$T_{(+1)}$ $-$ $1_{M_{1}}$ of $M_{1}$ is unitarily equivalent to the adjoint
of the unilateral shift $U^{*}$ on $l^{2}(\Bbb{N})$; and the element $%
T_{(-1)}$ $-$ $1_{M_{1}}$ of $M_{1}$ is unitarily equivalent to the
unilateral shift $U$ on $l^{2}(\Bbb{N}).$

By the above proposition, we can obtain the following theorem.

\begin{theorem}
Let $S$ $=$ $\sum_{j=-n}^{-1}$ $t_{j}$ $U^{*\,j}$ $+$ $t_{0}$ $I$ $+$ $%
\sum_{i=1}^{k}$ $t_{i}U^{i}$ be a Toeplitz operator, with $t_{p}$ $\in $ $%
\Bbb{C},$ for $p$ $=$ $-n,$ ..., $-1,$ $0,$ $1,$ ..., $k,$ in the classical
Toeplitz algebra $\mathcal{U}_{1}$ (or equivalently, $S$ is the Toeplitz
operator $T_{\varphi }$ with its trigonometric polynomial symbol $\varphi
(z) $ $=$ $\sum_{p=-n}^{k}$ $t_{p}$ $z^{p}$ in $L^{\infty }(\Bbb{T}),$ on $%
H^{2}(\Bbb{T})$). Let $T_{(\pm k)}$ be the elements of the graph von Neumann
algebra $M_{1}$ of the 1-regular tree $\mathcal{T}_{1}$, for all $k$ $\in $ $%
\Bbb{N}.$ Then the Toeplitz operator $S$ is unitarily equivalent to the
element $S^{\prime }$ of $M_{1},$ on $l^{2}(\Bbb{N})$ $=$ $H_{V},$ where

(5.1.18)

\begin{center}
$S^{\prime }$ $=$ $\sum_{j=1}^{n}$ $t_{-j}$ $T_{(+j)}$ $+$ $s_{0}$ $%
1_{M_{1}} $ $+$ $\sum_{i=1}^{k}$ $t_{i}$ $T_{(-i)},$
\end{center}

with

\begin{center}
$s_{0}$ $=$ $t_{0}$ $-$ $\left( \sum_{j=-n}^{-1}\text{ }t_{j}\right) $ $-$ $%
\left( \sum_{i=1}^{k}t_{i}\right) $ in $\Bbb{C}.$
\end{center}
\end{theorem}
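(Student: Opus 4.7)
The plan is to reduce directly to the preceding proposition, which supplies the crucial identifications
$$T_{(+k)}-1_{M_{1}}\;\overset{\text{U.E}}{=}\;U^{*k},\qquad T_{(-k)}-1_{M_{1}}\;\overset{\text{U.E}}{=}\;U^{k},$$
under the common intertwining unitary $\alpha$ (induced by the canonical identification $H_{V}\;\overset{\text{Hilbert}}{=}\;l^{2}(\mathbb{N})$). The key observation is that the \emph{same} unitary $\alpha$ implements every one of these equivalences, so we may freely take linear combinations: if $X_{i}\overset{\text{U.E}}{=}Y_{i}$ via a common unitary for $i=1,\ldots,m$, then $\sum c_{i}X_{i}\overset{\text{U.E}}{=}\sum c_{i}Y_{i}$ via the same unitary.

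First, I would rewrite the proposition as $U^{*j}\overset{\text{U.E}}{=}T_{(+j)}-1_{M_{1}}$ and $U^{i}\overset{\text{U.E}}{=}T_{(-i)}-1_{M_{1}}$ for all $i,j\in\mathbb{N}$, and also record $I\overset{\text{U.E}}{=}1_{M_{1}}$. After re-indexing the negative-index sum in $S$ by $j\mapsto -j$, so that $\sum_{j=-n}^{-1}t_{j}U^{*j}=\sum_{j=1}^{n}t_{-j}U^{*j}$, I would substitute these equivalences directly:
$$
S \;\overset{\text{U.E}}{=}\; \sum_{j=1}^{n} t_{-j}\bigl(T_{(+j)}-1_{M_{1}}\bigr)\;+\;t_{0}\,1_{M_{1}}\;+\;\sum_{i=1}^{k} t_{i}\bigl(T_{(-i)}-1_{M_{1}}\bigr).
$$

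Next, I would collect the coefficients of $1_{M_{1}}$. Each $T_{(+j)}$ and $T_{(-i)}$ appears with its intended coefficient, while the identity contributions add up to
$$
s_{0} \;=\; t_{0}\;-\;\sum_{j=1}^{n} t_{-j}\;-\;\sum_{i=1}^{k} t_{i}\;=\;t_{0}\;-\;\sum_{j=-n}^{-1} t_{j}\;-\;\sum_{i=1}^{k} t_{i},
$$
which is precisely the expression for $s_{0}$ in the statement. Hence $S\overset{\text{U.E}}{=}S'$ with $S'$ as claimed, and $S'$ lies in $M_{1}$ since each $T_{(\pm j)}$ and $1_{M_{1}}$ does.

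There is no substantial obstacle here; the argument is purely bookkeeping. The only minor point that needs care is the convention on the symbol $U^{*j}$ for negative $j$ (interpreted as $(U^{*})^{|j|}$) and the re-indexing $\sum_{j=-n}^{-1}\longleftrightarrow\sum_{j=1}^{n}$ with $t_{j}\mapsto t_{-j}$; once that is fixed, everything else is a one-line substitution together with a single algebraic collection of identity terms. The genuine content of the theorem is already located in the preceding proposition, where the matrix shapes of $T_{(\pm k)}-1_{M_{1}}$ were shown to be $U^{*k}$ and $U^{k}$; the present statement merely extends that identification linearly to trigonometric-polynomial Toeplitz symbols.
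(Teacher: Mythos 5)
Your proposal is correct and follows essentially the same route as the paper's own proof: both substitute the unitary equivalences $U^{*j}\overset{\text{U.E}}{=}T_{(+j)}-1_{M_{1}}$ and $U^{i}\overset{\text{U.E}}{=}T_{(-i)}-1_{M_{1}}$ from the preceding proposition into the expansion of $S$ and then collect the identity terms to obtain $s_{0}$. Your explicit remark that a single common unitary implements all of these equivalences (so that linear combinations may be taken) is a point the paper uses only implicitly, and is a worthwhile clarification.
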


\begin{proof}
\strut By the above proposition, the operators $U^{k}$ and $U^{*\,l}$ of $%
B\left( l^{2}(\Bbb{N})\right) $ are unitarily equivalent to the elements $%
T_{(-k)}$ $-$ $1_{M_{1}},$ and $T_{(+k)}$ $-$ $1_{M_{1}}$ of $M_{1}$ on $%
l^{2}(\Bbb{N}),$ respectively, where $U$ is the unilateral shift on $l^{2}(%
\Bbb{N}).$ So, the given Toeplitz operator $S$ is unitarily equivalent to
the element,

$\qquad S^{\prime }$ $=$ $\sum_{j=-n}^{-1}t_{j}\left(
T_{(-j)}-1_{M_{1}}\right) $ $+$ $t_{0}$ $1_{M_{1}}$ $+$ $\sum_{i=1}^{k}$ $%
t_{i}\left( T_{(-i)}-1_{M_{1}}\right) $

$\qquad \qquad =$ $\left(
\sum_{j=-n}^{-1}t_{j}T_{(-j)}-\sum_{j=1}^{-1}t_{j}1_{M_{1}}\right) $ $+$ $%
t_{0}$ $1_{M_{1}}$

$\qquad \qquad \qquad \qquad +$ $\left(
\sum_{i=-n}^{k}t_{i}T_{(-i)}-\sum_{i=1}^{k}t_{i}1_{M_{1}}\right) $

$\qquad \qquad =$ $\left( \sum_{j=-n}^{-1}\text{ }t_{j}\text{ }%
T_{(-j)}\right) $ $+$ $\left( \sum_{i=1}^{k}t_{i}\text{ }T_{(-i)}\right) $

$\qquad \qquad \qquad \qquad $ $-$ $\left( \sum_{j=-n}^{-1}\text{ }%
t_{j}\right) $ $1_{M_{1}}$ $+$ $t_{0}$ $1_{M_{1}}$ $-$ $\left(
\sum_{i=1}^{k}t_{i}\right) $ $1_{M_{1}}$

$\qquad \qquad =$ $\left( \sum_{j=-n}^{-1}\text{ }t_{j}\text{ }%
T_{(-j)}\right) $ $+$ $\left( \sum_{i=1}^{k}t_{i}\text{ }T_{(-i)}\right) $

$\qquad \qquad \qquad \qquad +$ $\left( t_{0}-\left(
\sum_{j=-n}^{-1}t_{j}\right) -\left( \sum_{i=1}^{k}t_{i}\right) \right) $ $%
1_{M_{1}}$

\strut
\end{proof}

\strut The above theorem characterizes the classical Toeplitz operators in
terms of 1-tree operators.

\begin{corollary}
The classical Toeplitz algebra $\mathcal{U}_{1}$ is $*$-isomorphic to the $%
C^{*}$-subalgebra

\begin{center}
$C^{*}\left( \alpha \{T_{(n)}\in M_{1}:n\in \Bbb{Z},\text{ with }T_{(0)}%
\overset{def}{=}1_{M_{1}}\}\right) $
\end{center}

of $B\left( l^{2}(\Bbb{N})\right) ,$ where $\alpha $ is the action of $M_{1}$%
, in the sense of (5.1.4), acting on $H_{V}$ $=$ $l^{2}(\Bbb{N}).$ $\square $
\end{corollary}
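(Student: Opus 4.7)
The plan is to prove the stated $*$-isomorphism by showing the two $C^*$-algebras are in fact \emph{equal} as $C^*$-subalgebras of $B(l^2(\mathbb{N}))$, via a two-way inclusion. Since both algebras live in the same ambient algebra $B(l^2(\mathbb{N}))$, once equality is established, the $*$-isomorphism is just the identity map. The key input is the immediately preceding proposition, which tells us that
\[
\alpha(T_{(+k)}) - I \;\overset{\text{U.E.}}{=}\; U^{*k}, \qquad \alpha(T_{(-k)}) - I \;\overset{\text{U.E.}}{=}\; U^{k},
\]
for all $k \in \mathbb{N}$, together with $\alpha(T_{(0)}) = \alpha(1_{M_1}) = I$.

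For the forward inclusion $\mathcal{U}_1 \subseteq C^*(\alpha\{T_{(n)} : n \in \mathbb{Z}\})$, I would observe that by definition $\mathcal{U}_1 = C^*(U)$. Taking $k=1$ in the proposition above yields
\[
U \;\overset{\text{U.E.}}{=}\; \alpha(T_{(-1)}) - I \;=\; \alpha(T_{(-1)}) - \alpha(T_{(0)}),
\]
so (after identifying the representations via the unitary equivalence) $U$ lies in the $C^*$-subalgebra generated by $\{\alpha(T_{(n)}) : n \in \mathbb{Z}\}$. Since the latter is a $C^*$-algebra containing $U$, it must contain $C^*(U) = \mathcal{U}_1$.

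For the reverse inclusion $C^*(\alpha\{T_{(n)} : n \in \mathbb{Z}\}) \subseteq \mathcal{U}_1$, I would show that each generator $\alpha(T_{(n)})$ already lies in $\mathcal{U}_1$. Indeed, for $n > 0$ we have $\alpha(T_{(n)}) = U^{*n} + I$ (up to U.E.), for $n < 0$ we have $\alpha(T_{(n)}) = U^{|n|} + I$, and $\alpha(T_{(0)}) = I$; all three are polynomials in $U$ and $U^*$ and hence lie in $C^*(U) = \mathcal{U}_1$. Taking the $C^*$-algebra generated by these elements keeps us inside $\mathcal{U}_1$.

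Combining the two inclusions gives equality as $C^*$-subalgebras of $B(l^2(\mathbb{N}))$, which trivially yields the asserted $*$-isomorphism. The only delicate point -- and the one I would be careful to spell out -- is that the previous proposition produces \emph{unitary equivalences} rather than literal equalities; to make the inclusion arguments clean, I would fix once and for all a unitary $W$ on $l^2(\mathbb{N})$ implementing the equivalences $W^* \alpha(T_{(\pm k)}) W = I \pm \text{(shift power)}$, and phrase both inclusions relative to this fixed $W$. This is purely bookkeeping, not a real obstacle, but it is where the informal ``$\overset{\text{U.E.}}{=}$'' notation used throughout Section~5.1 needs to be upgraded to an honest identification.
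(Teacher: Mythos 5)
Your proposal is correct and matches the argument the paper intends: the corollary is left unproved (marked $\square$) precisely because it follows from the preceding proposition and theorem by the two-way generator-exchange you describe, namely $U = \alpha(T_{(-1)}) - \alpha(T_{(0)})$ in one direction and $\alpha(T_{(\pm k)}) = I + U^{*k}$ resp.\ $I + U^{k}$ in the other, all under the single fixed identification $H_{V} \cong l^{2}(\Bbb{N})$ coming from indexing the vertices of $\mathcal{T}_{1}$ by $\Bbb{N}$. Your remark that the various ``U.E.''~statements must be implemented by one and the same unitary is a worthwhile clarification, but it is automatic in the paper's setup, so there is no gap.
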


\strut \strut The above corollary also shows that the classical Toeplitz
algebra $\mathcal{U}_{1}$ is indeed a $C^{*}$-subalgebra of the graph von
Neumann algebra $M_{1}$ of the 1-regular tree $\mathcal{T}_{1}.$

\subsection{$2$-Tree Operators}

In this section, we will consider the relation between the generalized
Toeplitz operators in $Toep(\Bbb{C}^{\oplus 2})$ and $2$-tree operators
which are the graph operators in the graph von Neumann algebra $M_{2}$ of
the $2$-regular tree $\mathcal{T}_{2},$

\begin{center}
$\mathcal{T}_{2}$ $=$ \quad $
\begin{array}{lllllllll}
&  &  &  &  &  & \bullet ^{111} & \cdots &  \\ 
&  &  &  & \overset{11}{\bullet } & \overset{\nearrow }{\underset{x}{%
\rightarrow }} & \bullet _{112} & \cdots &  \\ 
&  &  & \nearrow &  &  &  &  &  \\ 
&  & \overset{1}{\bullet } & \rightarrow & \underset{12}{\bullet } & 
\underset{\searrow }{\rightarrow } & \bullet ^{121} & \cdots &  \\ 
& \nearrow &  &  &  &  & \bullet _{122} & \cdots &  \\ 
_{\varnothing }\bullet &  &  &  &  &  &  &  &  \\ 
&  &  &  &  &  &  &  &  \\ 
& \searrow &  &  &  &  & \bullet ^{211} & \cdots &  \\ 
&  & \underset{2}{\bullet } & \rightarrow & \overset{21}{\bullet } & 
\overset{\nearrow }{\rightarrow } & \bullet _{212} & \cdots &  \\ 
&  &  & \overset{x_{1}}{\searrow } &  &  &  &  &  \\ 
&  &  &  & \underset{22}{\bullet } & \underset{\searrow }{\overset{x_{2}}{%
\rightarrow }} & \bullet ^{221} & \cdots &  \\ 
&  &  &  &  &  & \bullet _{222} & \cdots . & 
\end{array}
$
\end{center}

\strut As we discussed in Section 4.1, the corresponding graph Hilbert space 
$H_{2}$ of $\mathcal{T}_{2}$ is decomposed by the vertex space $H_{V}$ and
its orthogonal complemented subspace $H_{V}^{\perp }$. Like in Section 5.1,
we want to represent graph operators in the graph von Neumann algebra $M_{2}$
as an operator on $H_{V}.$ To do that we first concentrate on characterize $%
H_{V}.$

Now, we denote the process sending the vertex

\begin{center}
$i_{1}i_{2}$ ... $i_{n}$
\end{center}

\strut in the $n$-th level of $\mathcal{T}_{2}$ to the vertex

\begin{center}
$i_{1}i_{2}$ ... $i_{n}$ $1$
\end{center}

in the $(n$ $+$ $1)$-th level of $\mathcal{T}_{2}$ by $\gamma _{1},$ for all 
$n$ $\in $ $\Bbb{N}$, where $i_{1},$ ..., $i_{n}$ $\in $ $\{1,$ $2\}.$

Similarly, we denote the process sending

\begin{center}
$i_{1}$ ... $i_{n}$ to $i_{1}$ ... $i_{n}$\strut $2$
\end{center}

\strut by $\gamma _{2},$ for all $n$ $\in $ $\Bbb{N}.$ i.e., $\gamma _{j}$
are the function on the vertex set $V\left( \mathcal{T}_{2}\right) $ of $%
\mathcal{T}_{2}$, defined by

(5.2.1)

\begin{center}
$\gamma _{j}\left( i_{1}i_{2}\text{ ... }i_{n}\right) $ $\overset{def}{=}$ $%
i_{1}$ $i_{2}$ ... $i_{n}$ $j,$
\end{center}

for all $j$ $=$ $1,$ $2,$ for all $n$ $\in $ $\Bbb{N}.$ Also, we can
understand these functions $\gamma _{j}$ generates the edges in $\mathcal{T}%
_{2}.$ i.e., the function

\begin{center}
$\gamma _{j}\left( i_{1}\text{ }i_{2}\text{ ... }i_{n}\right) $
\end{center}

can be understood as the edge

\begin{center}
$\left( i_{1}i_{2}\text{ ... }i_{n},\text{ }i_{1}i_{2}\text{ ... }%
i_{n}j\right) ,$
\end{center}

in $\mathcal{T}_{2},$ for $j$ $=$ $1,$ $2.$

Now, let $X_{2}$ be the set $\{1,$ $2\},$ and define $\mathcal{X}_{2},$ by
the union of $X_{2}^{*}$ and $\{\emptyset \},$ where $\emptyset $ is the
empty word in $X_{2},$ i.e.,

(5.2.2)

\begin{center}
$\mathcal{X}_{2}$ $=$ $\{\emptyset \}$ $\cup $ $X_{2}^{*},$
\end{center}

where $X_{2}^{*}$ means the set of all words in $X_{2}.$ So, we can
understand the set $\mathcal{X}_{2}$ is the collection of all words in $%
X_{2} $ and the empty word. Then we can regard the functions $\gamma _{j}$
of (5.2.1) on as functions on $\mathcal{X}_{2},$ for all $j$ $=$ $1,$ $2.$

\strut \strut

\textbf{Notation} Let $W$ be a finite words in $X_{2}^{*}.$ Then we denote
the compositions

\begin{center}
$\gamma _{j_{1}}$ $\circ $ $\gamma _{j_{2}}$ $\circ $ ... $\circ $ $\gamma
_{j_{k}}$ on $\mathcal{X}_{2}$
\end{center}

simply by

\begin{center}
$\gamma _{j_{1}j_{2}\text{ ... }j_{k}},$ or $\gamma _{W},$
\end{center}

whenever $W$ $=$ $j_{1}$ $j_{2}$ ... $j_{k}$ $\in $ $X_{2}^{*}.$ $\square $

\strut

The above notation gives us a motivation for the following proposition.

\begin{proposition}
Let $X_{2}^{*}$ be the set of all finite words in $X_{2}$ $=$ $\{1,$ $2\},$
and let $\mathcal{X}_{2}$ be the set defined in (5.2.2). Then there exists
an (right) action $\gamma $ of $X_{2}^{*},$ acting on $\mathcal{X}_{2},$
such that

(5.2.3)

\begin{center}
$\gamma $ $:$ $W$ $\in $ $X_{2}^{*}$ $\longmapsto $ [$\gamma _{W}$ $:$ $%
\mathcal{X}_{2}$ $\rightarrow $ $\mathcal{X}_{2}$],
\end{center}

with additional equality

\begin{center}
$\gamma _{W}(\emptyset )$ $=$ $W,$ for all $W$ $\in $ $\mathcal{X}_{2}.$
\end{center}
\end{proposition}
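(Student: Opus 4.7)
The plan is to construct $\gamma$ by extending the two generators $\gamma_{1}, \gamma_{2}$ from (5.2.1) to all of $X_{2}^{*}$, and then to verify the required properties. The natural candidate is right-concatenation: for $W \in X_{2}^{*}$ and $x \in \mathcal{X}_{2}$, set $\gamma_{W}(x) = x W$, with the convention $\emptyset \cdot W = W$. This clearly agrees with (5.2.1) in the single-letter case.

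First I would define $\gamma_{W}$ for a word $W = j_{1} j_{2} \cdots j_{k} \in X_{2}^{*}$ via the composition recipe $\gamma_{W} = \gamma_{j_{1}} \circ \gamma_{j_{2}} \circ \cdots \circ \gamma_{j_{k}}$ laid down in the Notation immediately preceding the Proposition. A straightforward induction on $k = |W|$, unwinding (5.2.1) one letter at a time, shows that $\gamma_{W}(x) = x W$ for every $x \in X_{2}^{*}$; in particular $\gamma_{W}(\emptyset) = W$, which is the extra equality demanded in (5.2.3).

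Next I would verify that $\gamma$ is indeed an action, i.e.\ that $\gamma_{W_{1} W_{2}} = \gamma_{W_{1}} \circ \gamma_{W_{2}}$ for all $W_{1}, W_{2} \in X_{2}^{*}$. By the preceding step, both sides send any $x \in \mathcal{X}_{2}$ to $x W_{1} W_{2}$, so this identity reduces to associativity of concatenation in the free semigroup $X_{2}^{*}$. The same computation also yields well-definedness of $\gamma_{W}$: any parsing of $W$ into letters leads to the same function $\mathcal{X}_{2} \to \mathcal{X}_{2}$.

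The main obstacle is purely notational rather than substantive. The content amounts to associativity of word-concatenation, plus the observation that $\gamma_{1}, \gamma_{2}$ are nothing but single-letter right-concatenations. The only care required is to keep track of the order of composition in the recipe $\gamma_{W} = \gamma_{j_{1}} \circ \cdots \circ \gamma_{j_{k}}$ and match it against the convention for ``right action,'' so that the resulting semigroup homomorphism $X_{2}^{*} \to \mathrm{End}(\mathcal{X}_{2})$ runs in the correct direction; once this is fixed, all claims become immediate consequences of associativity of concatenation in $X_{2}^{*}$.
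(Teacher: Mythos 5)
Your proposal is correct and follows essentially the same route as the paper: identify $\gamma _{W}$ with right-concatenation by $W$ (so $\gamma _{W}(\emptyset )$ $=$ $W$ is immediate) and reduce the action property to associativity of concatenation in $X_{2}^{*}$; in fact you are more complete, since the paper's own proof never verifies $\gamma _{W}(\emptyset )$ $=$ $W$ and merely asserts the anti-multiplicativity. The only point to settle is the one you already flag: with the standard reading $(f\circ g)(x)$ $=$ $f(g(x))$, right-concatenation yields $\gamma _{W_{1}W_{2}}$ $=$ $\gamma _{W_{2}}\circ \gamma _{W_{1}}$ (the form the paper states), not $\gamma _{W_{1}}\circ \gamma _{W_{2}}$ as written in your second verification step, so your claim that ``both sides send $x$ to $xW_{1}W_{2}$'' requires fixing that order (or adopting the diagrammatic convention for $\circ $) before it is literally true.
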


\begin{proof}
The map $\gamma $ $:$ $X_{2}^{*}$ $\rightarrow $ $\mathcal{F}(\mathcal{X}%
_{2})$ is well-defined, where $\mathcal{F}(\mathcal{X}_{2})$ is the set of
all functions on $\mathcal{X}_{2}.$ So, it is enough to show that

\begin{center}
$\gamma (W_{1}W_{2})$ $=$ $\gamma (W_{2})$ $\circ $ $\gamma (W_{1}),$
\end{center}

on $\mathcal{X}_{2},$ for all $W_{1},$ $W_{2}$ $\in $ $X_{2}^{*}.$ Indeed,
we can have that

\begin{center}
$\gamma (W_{1}W_{2})$ $=$ $\gamma _{W_{1}W_{2}}$ $=$ $\gamma _{W_{2}}$ $%
\gamma _{W_{1}}$ $=$ $\gamma (W_{2})$ $\circ $ $\gamma (W_{1}).$
\end{center}
\end{proof}

The above proposition, indicating the existence of the right action $\gamma $
of $X_{2}^{*}$ on $\mathcal{X}_{2},$ shows that the vertex set $V\left( 
\mathcal{T}_{2}\right) $ is generated by the action $\gamma .$ Therefore, we
can obtain the set-equalities

(5.2.4)

\begin{center}
$V\left( \mathcal{T}_{2}\right) $ $=$ $\mathcal{X}_{2}$ $=$ $\{\emptyset \}$ 
$\cup $ $\left\{ \gamma _{W}(\mathcal{X}_{2}):W\in X_{2}^{*}\right\} $.
\end{center}

\strut Now, consider the vertex space more in detail. Motivated by (5.2.3)
and (5.2.4), we may expect that the vertex space

\begin{center}
$H_{V}$ $=$ $l^{2}(\mathcal{X}_{2})$ $=$ $l^{2}(X_{2}^{*})$ $=$ $l^{2}\left(
V(\mathcal{T}_{2})\right) $
\end{center}

is Hilbert-space isomorphic to \emph{the }(\emph{generalized})\emph{\ Fock
space over} $\Bbb{C}^{\oplus 2},$

(5.2.5)

\begin{center}
$\mathcal{F}_{2}$ $=$ $\mathcal{F}(\Bbb{C}^{\oplus 2}\mathcal{)}$ $=$ $%
\underset{k=0}{\overset{\infty }{\oplus }}$ $\left( \Bbb{C}^{\oplus
2}\right) ^{\otimes \,k},$
\end{center}

with the identity,

\begin{center}
$\left( \Bbb{C}^{\oplus 2}\right) ^{\otimes 0}$ $=$ $\Bbb{C}.$
\end{center}

Let $\mathcal{F}_{H}$ be the Fock space over an arbitrary Hilbert space $H,$
like in Section 4.2. If $H$ is an $n$-dimensional Hilbert space $\Bbb{C}%
^{\oplus n},$ then we denote $\mathcal{F}_{\Bbb{C}^{\oplus n}}$ simply by $%
\mathcal{F}_{n},$ for all $n$ $\in $ $\Bbb{N}.$

\begin{theorem}
Let $H_{V}$ be the vertex space of the graph Hilbert space $H_{2}$ of $%
\mathcal{T}_{2}.$ Then

(5.2.6)

\begin{center}
$H_{V}$ $\overset{\text{Hilbert}}{=}$ $l^{2}\left( \mathcal{X}_{2}\right) $ $%
\overset{\text{Hilbert}}{=}$ $\mathcal{F}_{2}$.
\end{center}
\end{theorem}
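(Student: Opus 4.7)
The first equality $H_V \overset{\text{Hilbert}}{=} l^2(\mathcal{X}_2)$ is immediate from the definition: by construction $H_V = l^2(V(\mathcal{T}_2))$, and the set-equality $V(\mathcal{T}_2) = \mathcal{X}_2$ has just been established in (5.2.4). So the real work is in the second isomorphism $l^2(\mathcal{X}_2) \overset{\text{Hilbert}}{=} \mathcal{F}_2$.

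My plan is to exhibit an explicit unitary by matching orthonormal bases level by level. First, I would stratify $\mathcal{X}_2$ according to word-length. Writing $X_2^k$ for the set of words of length $k$ in the alphabet $X_2 = \{1,2\}$ (with the convention $X_2^0 = \{\emptyset\}$), we have the disjoint decomposition
\[
\mathcal{X}_2 \;=\; \{\emptyset\} \,\sqcup\, \bigsqcup_{k=1}^{\infty} X_2^k,
\]
which under the action $\gamma$ of (5.2.3) corresponds precisely to the partition of $V(\mathcal{T}_2)$ into the root together with the successive generations $\gamma_W(\emptyset)$ for $W \in X_2^k$. Since $l^2$ of a disjoint union is the Hilbert space direct sum of $l^2$'s of the pieces, this gives
\[
l^2(\mathcal{X}_2) \;\overset{\text{Hilbert}}{=}\; \bigoplus_{k=0}^{\infty} l^2(X_2^k).
\]

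Next, I would match the $k$-th summand with the $k$-fold tensor slice of the Fock space. Let $\{e_1, e_2\}$ denote the standard orthonormal basis of $\mathbb{C}^{\oplus 2}$. The map
\[
W = j_1 j_2 \cdots j_k \;\longmapsto\; e_{j_1} \otimes e_{j_2} \otimes \cdots \otimes e_{j_k}
\]
sends the canonical orthonormal basis $\{\delta_W : W \in X_2^k\}$ of $l^2(X_2^k)$ bijectively onto the standard product orthonormal basis of $(\mathbb{C}^{\oplus 2})^{\otimes k}$, so it extends by linearity and continuity to a unitary $U_k : l^2(X_2^k) \to (\mathbb{C}^{\oplus 2})^{\otimes k}$. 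For $k = 0$ the empty word $\emptyset$ corresponds to the vacuum $1 \in (\mathbb{C}^{\oplus 2})^{\otimes 0} = \mathbb{C}$. Taking the orthogonal direct sum $U = \bigoplus_{k=0}^\infty U_k$ yields a unitary
\[
U \,:\, l^2(\mathcal{X}_2) \,\longrightarrow\, \bigoplus_{k=0}^\infty (\mathbb{C}^{\oplus 2})^{\otimes k} \,=\, \mathcal{F}_2,
\]
as desired.

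There is no serious obstacle here: the whole argument is a basis bookkeeping using the fact that both spaces carry canonical orthonormal bases naturally indexed by $\mathcal{X}_2$. The only point to verify carefully is that the identification $V(\mathcal{T}_2) = \mathcal{X}_2$ coming from (5.2.4) is indeed \emph{bijective}, i.e.\ that distinct words $W \in X_2^*$ give distinct vertices $\gamma_W(\emptyset)$ of $\mathcal{T}_2$; this follows because $\mathcal{T}_2$ is a tree (in particular simplicial and acyclic), so no two distinct paths from the root terminate at the same vertex. Once that is noted, the above construction gives the claimed Hilbert-space isomorphism (5.2.6).
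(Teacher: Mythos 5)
Your proof is correct and follows essentially the same route as the paper's: both arguments decompose $l^{2}(\mathcal{X}_{2})$ by word-length and define the isomorphism level by level via $\xi_{j_{1}\cdots j_{k}}\mapsto e_{j_{1}}\otimes\cdots\otimes e_{j_{k}}$ (the paper's $\Phi=\oplus_{n}\Phi_{n}$), concluding that a basis-preserving map extends to a unitary. Your added remark on the injectivity of $W\mapsto\gamma_{W}(\emptyset)$ is a reasonable point the paper leaves implicit, but the substance of the argument is the same.
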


\begin{proof}
\strut First, recall the relation (4.1.1), and (4.1.2). In particular, by
(4.1.1), the vertex space $H_{V}$ of the graph Hilbert space $H_{2}$ is
Hilbert-space isomorphic to

(5.2.7)

\begin{center}
$H_{V}$ $\overset{\text{Hilbert}}{=}$ $\underset{v\in V(\mathcal{T}_{2})}{%
\oplus }$ $\Bbb{C}\xi _{v}$ $=$ $\underset{W\in \mathcal{X}_{2}}{\oplus }$ $%
\Bbb{C}\xi _{W},$
\end{center}

where

\begin{center}
$\mathcal{B}_{V}$ $=$ $\{\xi _{v}$ $:$ $v$ $\in $ $V(\mathcal{T}_{2})\}$ $=$ 
$\{\xi _{W}$ $:$ $W$ $\in $ $\mathcal{X}_{2}\}$
\end{center}

is the Hilbert basis (or the orthonormal basis) of $H_{V}.$ Motivated by
(5.2.4), we may determine a linear map, satisfying

(5.2.8)

\begin{center}
$\xi _{j_{1}j_{2}...j_{k}}$ $\in $ $\mathcal{B}_{V}$ $\longmapsto $ $\eta
_{j_{1}}\otimes \eta _{j_{2}}$ $\otimes $ ... $\otimes $ $\eta _{j_{k}}$ $%
\in $ $\mathcal{B}_{k},$
\end{center}

where $\mathcal{B}_{k}$ is the Hilbert basis for $\left( \Bbb{C}^{\oplus
2}\right) ^{\otimes k},$ for all $k$ $\in $ $\Bbb{N},$ where

(5.2.9)

\begin{center}
$\eta _{j_{i}}$ $=$ $\left\{ 
\begin{array}{ll}
(1,\text{ }0) & \text{if }j_{i}=1 \\ 
(0,\text{ }1) & \text{if }j_{i}=2,
\end{array}
\right. $
\end{center}

for all $i$ $=$ $1,$ ..., $k,$ for $k$ $\in $ $\Bbb{N}.$ We denote this
morphism with (5.2.8) by $\Phi _{k},$ for all $k$ $\in $ $\Bbb{N}.$ Then we
define a linear map

(5.2.10)

\begin{center}
$\Phi $ $:$ $\underset{W\in \mathcal{X}_{2}}{\oplus }$ $\Bbb{C}\xi _{W}$ $%
\rightarrow $ $\mathcal{F}_{2}$
\end{center}

by

\begin{center}
$\Phi $ $=$ $\underset{n=0}{\overset{\infty }{\oplus }}$ $\Phi _{n},$ with
the identity $\Phi _{0}$ $=$ $i_{d},$
\end{center}

where $\Phi _{n}$ is the linear map satisfying (5.2.8), for $n$ $\geq $ $1,$
and where $i_{d}$ means the identity map on $\Bbb{C}$ (i.e., $i_{d}(z)$ $=$ $%
z,$ $\forall $ $z$ $\in $ $\Bbb{C}$). Indeed, we can define such a linear
map, because

(5.2.11)

\begin{center}
$\underset{W\in \mathcal{X}_{2}}{\oplus }$ $\Bbb{C}\xi _{W}$ $=$ $\Bbb{C}$ $%
\oplus $ $\left( \underset{n=2}{\overset{\infty }{\oplus }}\left( \underset{%
W\in \mathcal{X}_{2}(k)}{\oplus }\text{ }\Bbb{C}\xi _{W}\right) \right) ,$
\end{center}

where

\begin{center}
$\mathcal{X}_{2}(k)$ $\overset{def}{=}$ $\{W$ $\in $ $\mathcal{X}_{2}$ $:$ $%
\left| W\right| $ $=$ $k\},$ for all $k$ $\in $ $\Bbb{N},$
\end{center}

where $\left| W\right| $ means the length of the word $W.$ So, the linear
map $\Phi $ of (5.2.10) is a well-defined into $\mathcal{F}_{2},$ by
(5.2.9). i.e., the summands

\begin{center}
$\Bbb{C}\xi _{1j_{1}j_{2}\text{ ... }j_{k}}$
\end{center}

of (5.2.11) corresponds to the subspace

\begin{center}
$p_{j_{1}}\left( \Bbb{C}^{\oplus 2}\right) $ $\otimes $ $p_{j_{2}}\left( 
\Bbb{C}^{\oplus 2}\right) $ $\otimes $ ... $\otimes $ $p_{j_{k}}\left( \Bbb{C%
}^{\oplus 2}\right) $
\end{center}

of the summand $\left( \Bbb{C}^{\oplus 2}\right) ^{\otimes k}$ of the Fock
space $\mathcal{F}_{2},$ where $p_{1}$ and $p_{2}$ are the natural
projections on $\Bbb{C}^{\oplus 2},$

\begin{center}
$\left( 
\begin{array}{ll}
1 & 0 \\ 
0 & 0
\end{array}
\right) ,$ and $\left( 
\begin{array}{ll}
0 & 0 \\ 
0 & 1
\end{array}
\right) ,$
\end{center}

respectively.

Therefore, this linear map $\Phi $ is basis-element preserving, and hence it
is bijective. Moreover, it is easy to check that $\left\| \Phi \right\| $ $=$
$1$, by the very definition. So, $\Phi $ is the isometric bijective linear
map, preserving Hilbert bases. Therefore, the Hilbert spaces

\begin{center}
$\underset{W\in \mathcal{X}_{2}}{\oplus }$ $\Bbb{C}\xi _{W}$ and $\mathcal{F}%
_{2}$
\end{center}

are Hilbert-space isomorphic, with its isomorphism $\Phi .$ This shows that

\begin{center}
$H_{V}$ $\overset{\text{Hilbert}}{=}$ $\underset{W\in \mathcal{X}_{2}}{%
\oplus }$ $\Bbb{C}$ $\xi _{W}$ $\overset{\text{Hilbert}}{=}$ $\mathcal{F}%
_{2}.$
\end{center}
\end{proof}

\strut The above theorem characterize the vertex space $H_{V}$ in the graph
Hilbert space $H_{2}$ of $\mathcal{T}_{2}$ by the Fock space $\mathcal{F}%
_{2} $ over $\Bbb{C}^{\oplus 2}.$ So, from now on, we use $H_{V}$ and $%
\mathcal{F}_{2},$ alternatively.

Similar to Section 5.1, we will represent graph operators as operators on
the vertex space $H_{V}$ $=$ $\mathcal{F}_{2}.$

\begin{lemma}
\strut Let $(W,$ $Wj)$ be an edge connecting a vertex

\begin{center}
$W$ $=$ $j_{1}j_{2}$ ... $j_{k}$ $\in $ $\mathcal{X}_{2}$
\end{center}

to a vertex

\begin{center}
$Wj$ $=$ $j_{1}j_{2}$ ... $j_{k}$ $j$ $\in $ $\mathcal{X}_{2}$,
\end{center}

for $j$ $=$ $1,$ $2,$ for $k$ $\in $ $\Bbb{N}.$ Then the corresponding
2-tree operator $L_{(W,Wj)}$ induced by an edge $(W,$ $Wj)$ is unitarily
equivalent to the restriction $r_{e_{j}}\mid _{\mathcal{H}_{W}}$ of the
right creation operator $r_{e_{j}}$ on $\mathcal{F}_{2}$ induced by $e_{j},$
for $j$ $=$ $1,$ $2,$ more precisely,

(5.2.12)

\begin{center}
$L_{(W,\text{ }Wj)}$ $\overset{\text{U.E}}{=}$ $r_{e_{j}}\mid _{\mathcal{H}%
_{W}}$ $:$ $\mathcal{H}_{W}$ $\rightarrow $ $\mathcal{H}_{W}(j)$
\end{center}

on $\mathcal{F}_{2}$ $=$ $H_{V},$ for all $j$ $=$ $1,$ $2,$ where

\begin{center}
$\mathcal{H}_{W}$ $=$ $p_{j_{1}}\left( \Bbb{C}^{\oplus 2}\right) $ $\otimes $
... $\otimes $ $p_{j_{k}}\left( \Bbb{C}^{\oplus 2}\right) $
\end{center}

and

\begin{center}
$\mathcal{H}_{W}(j)$ $=$ $p_{j_{1}}\left( \Bbb{C}^{\oplus }\right) $ $%
\otimes $ ... $\otimes $ $p_{j_{k}}\left( \Bbb{C}^{\oplus 2}\right) $ $%
\otimes $ $p_{j}\left( \Bbb{C}^{\oplus 2}\right) ,$
\end{center}

where $e_{1}$ and $e_{2}$ are the natural basis-vectors of $\Bbb{C}^{\oplus
2}$ and where $p_{1}$ and $p_{2}$ are the natural projections on $\Bbb{C}%
^{\oplus 2}.$
\end{lemma}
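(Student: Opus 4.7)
The plan is to transport the action of the edge operator $L_{(W,Wj)}$ through the Hilbert-space isomorphism $\Phi : H_{V} \to \mathcal{F}_{2}$ established in the proof of Theorem 5.2.3, and then compare it with the right creation operator $r_{e_{j}}$ on $\mathcal{F}_{2}$. Under $\Phi$, the vertex basis vector $\xi_{W}$ with $W = j_{1} j_{2} \ldots j_{k} \in \mathcal{X}_{2}$ maps to $\eta_{j_{1}} \otimes \cdots \otimes \eta_{j_{k}}$, which by (5.2.9) and the definitions of the natural projections $p_{1}, p_{2}$ on $\Bbb{C}^{\oplus 2}$ is a unit vector spanning the $1$-dimensional subspace $\mathcal{H}_{W} \subset \bigl(\Bbb{C}^{\oplus 2}\bigr)^{\otimes k} \subset \mathcal{F}_{2}$; likewise $\Phi(\xi_{Wj})$ spans $\mathcal{H}_{W}(j)$.

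Using the canonical groupoid action, the reduction rule (RR), and the vertex-space representation $\alpha$ introduced in (5.1.3)--(5.1.4) for the $1$-tree and carried over here to the $2$-tree setting, the partial isometry $L_{(W,Wj)}$ has $1$-dimensional initial space $\Bbb{C}\xi_{W}$ and $1$-dimensional final space $\Bbb{C}\xi_{Wj}$, realizing the map $\xi_{W} \mapsto \xi_{Wj}$ and vanishing on every other vertex basis vector --- in direct analogy with the matrix patterns of (5.1.5), (5.1.14), and (5.1.15) for the $1$-tree case. On the other hand, the right creation operator $r_{e_{j}}$, by its defining formula (4.2.4), sends $e_{j_{1}} \otimes \cdots \otimes e_{j_{k}}$ to $e_{j_{1}} \otimes \cdots \otimes e_{j_{k}} \otimes e_{j}$, so its restriction $r_{e_{j}}|_{\mathcal{H}_{W}}$ maps $\mathcal{H}_{W}$ isometrically onto $\mathcal{H}_{W}(j)$ by sending the spanning vector of $\mathcal{H}_{W}$ to the spanning vector of $\mathcal{H}_{W}(j)$. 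Combining these two computations, the isomorphism $\Phi$ intertwines $L_{(W,Wj)}$ with $r_{e_{j}}|_{\mathcal{H}_{W}}$, which gives the claimed unitary equivalence.

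The principal obstacle is the rigorous justification of the action $\xi_{W} \mapsto \xi_{Wj}$ in the second paragraph: a naive application of the groupoid multiplication $L_{w} \xi_{w'} = \xi_{w w'}$ would send $\xi_{Wj}$ to the edge vector $\xi_{(W,Wj)} \in H_{FP} = H_{V}^{\perp}$ rather than providing a map $\xi_{W} \mapsto \xi_{Wj}$ inside $H_{V}$. The statement is implicitly using the vertex-space representation $\alpha$ of (5.1.4) (rather than a literal compression) to fold edge vectors onto the ``next'' vertex in the target level. Making this identification explicit for the branching tree $\mathcal{T}_{2}$, so that each outgoing edge $(W,W1)$ and $(W,W2)$ at a given vertex $W$ corresponds precisely to the two independent right creation operators $r_{e_{1}}$ and $r_{e_{2}}$, is the central technical point, and is exactly what sets up the bridge to Subsection 5.3, where $N$-tree operators will be identified with the generators of $Toep^{R}\bigl(\Bbb{C}^{\oplus N}\bigr)$.
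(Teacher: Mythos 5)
Your proposal matches the paper's own proof: both transport $L_{(W,Wj)}$ through the Hilbert-space isomorphism $H_{V}\cong \mathcal{F}_{2}$ of Theorem 5.2.3 and observe that, under this identification, the edge operator acts on the one-dimensional subspace $\mathcal{H}_{W}$ exactly as $r_{e_{j}}$ does (sending its spanning vector to that of $\mathcal{H}_{W}(j)$) and vanishes on the orthogonal complement, which is precisely the operator $T_{j}$ of the paper's formula (5.2.13). The subtlety you flag in your final paragraph --- that the literal groupoid action gives $L_{(W,Wj)}\xi_{Wj}=\xi_{(W,Wj)}\in H_{V}^{\perp}$ and $L_{(W,Wj)}\xi_{W}=0$, so the claimed map $\xi_{W}\mapsto \xi_{Wj}$ inside $H_{V}$ requires an explicit vertex-space representation rather than a naive restriction --- is genuine, and the paper's proof does not resolve it either, asserting the identification simply ``by definition and by (5.2.6)''; your account is therefore the more candid one, though neither argument closes that gap.
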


\begin{proof}
By definition and by (5.2.6), for a fixed vertex

\begin{center}
$W$ $=$ $j_{1}$ $j_{2}$ ... $j_{k}$ $\in $ $\mathcal{X}_{2}$ $=$ $V\left( 
\mathcal{T}_{2}\right) ,$ for $k$ $\in $ $\Bbb{N},$
\end{center}

the corresponding 2-tree operator $L_{(W,Wj)},$ induced by the edge $(W,$ $%
Wj),$ is unitarily equivalent to an operator $T_{j}$ on the Fock space $%
\mathcal{F}_{2},$ sending all Hilbert-space elements in $\mathcal{H}_{W}$ to
the Hilbert-space elements in $\mathcal{H}_{W}(j),$ where $\mathcal{H}_{W}$
and $\mathcal{H}_{W}(j)$ are given as in (5.2.12) above, for $j$ $=$ $1,$ $%
2. $ If an Hilbert space elements are not contained in $\mathcal{H}_{W},$
then $T_{j}$ send such elements to the zero vector $0_{\mathcal{F}_{2}}$ of $%
\mathcal{F}_{2}.$ i.e., the 2-tree operator $L_{(W,Wj)}$ is unitarily
equivalent to the operator $T_{j}$ on the vertex space $H_{V}$ $=$ $\mathcal{%
F}_{2},$ where

(5.2.13)

\begin{center}
$T_{j}\eta $ $=$ $\left\{ 
\begin{array}{ll}
r_{e_{j}}\eta =\eta \otimes e_{j} & \text{if }\eta \in \mathcal{H}_{W} \\ 
0_{\mathcal{F}_{2}} & \text{otherwise,}
\end{array}
\right. $
\end{center}

for $j$ $=$ $1,$ $2,$ where $r_{e_{j}}$ is the right creation operator
induced by $e_{j},$ where

\begin{center}
$e_{1}$ $=$ $(1,$ $0),$ and $e_{2}$ $=$ $(0,$ $1)$ in $\Bbb{C}^{\oplus 2}.$
\end{center}

Remark that the operator $T_{j}$ is nothing but the restriction $%
r_{e_{j}}\mid _{\mathcal{H}_{W}}$ of the right creation operator $r_{e_{j}}$
to the subspace $\mathcal{H}_{W}$ of $\left( \Bbb{C}^{\oplus 2}\right)
^{\otimes k}$ in $\mathcal{F}_{2}.$ Therefore, the 2-tree operator $%
L_{(W,Wj)}$ is unitarily equivalent to the restriction $r_{e_{j}}\mid _{%
\mathcal{H}_{W}}$ of $r_{e_{j}}$ on the vertex space $H_{V}$ $=$ $\mathcal{F}%
_{2},$ for $j$ $=$ $1,$ $2.$
\end{proof}

\strut By the previous lemma, we can obtain the following theorem.

\begin{theorem}
Let $r_{e_{j}}$ be the right creation operator on the Fock space $\mathcal{F}%
_{2}$ over $\Bbb{C}^{\oplus 2},$ induced by the natural basis-vector $e_{j},$
for $j$ $=$ $1,$ $2.$ Then it is unitarily equivalent to the element $R_{j}$
of the graph von Neumann algebra $M_{2}$ of the 2-regular tree $\mathcal{T}%
_{2},$ where

(5.1.14)

\begin{center}
$R_{j}$ $\overset{def}{=}$ $\underset{(W,Wj)\in E\left( \mathcal{T}%
_{2}\right) ,W\in \mathcal{X}_{2}}{\sum }$ $L_{(W,Wj)}$ $\in $ $M_{2},$ for $%
j$ $=$ $1,$ $2,$
\end{center}

where $L_{(W,Wj)}$ are the 2-tree operators induced by edges $(W,$ $Wj)$ $%
\in $ $E\left( \mathcal{T}_{2}\right) .$ Also, the right annihilation
operator $r_{e_{j}}^{*}$ is unitarily equivalent to the adjoint $R_{j}^{*}$
of $R_{j},$ and hence

(5.1.15)

\begin{center}
$r_{e_{j}}^{*}$ $\overset{\text{U.E}}{=}$ $\underset{(W,Wj)\in E(\mathcal{T}%
_{2}^{-1}),\,W\in \mathcal{X}_{2}}{\sum }$ $L_{(Wj,W)}$, for $j$ $=$ $1,$ $%
2, $
\end{center}

where $\mathcal{T}_{2}^{-1}$ means the shadow of $\mathcal{T}_{2}.$
\end{theorem}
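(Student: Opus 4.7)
The strategy is to leverage the preceding lemma (which handled a single summand $L_{(W,Wj)}$) and then assemble all such summands into the full right creation operator $r_{e_j}$, using the orthogonal decomposition of $\mathcal{F}_2$ provided by the isomorphism $\Phi$ of Theorem~5.2.2.

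First, I would record the orthogonal decomposition
$$\mathcal{F}_2 \overset{\text{Hilbert}}{=} \underset{W\in \mathcal{X}_2}{\oplus}\, \mathcal{H}_W,$$
where $\mathcal{H}_{\emptyset} = \Bbb{C}\Omega$ and $\mathcal{H}_W = p_{j_1}(\Bbb{C}^{\oplus 2}) \otimes \cdots \otimes p_{j_k}(\Bbb{C}^{\oplus 2})$ for $W = j_1 j_2 \cdots j_k \in \mathcal{X}_2$. Each $\mathcal{H}_W$ is one-dimensional, spanned by $e_{j_1} \otimes \cdots \otimes e_{j_k}$, and these vectors exhaust the standard tensor basis of $\mathcal{F}_2$. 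Under $\Phi$, this decomposition is precisely the image of $H_V = \underset{W\in \mathcal{X}_2}{\oplus}\,\Bbb{C}\xi_W$ from (5.2.7).

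Next, the preceding lemma tells me that under $\Phi$ the 2-tree operator $L_{(W,Wj)}$ corresponds to the operator that acts as $r_{e_j}$ on $\mathcal{H}_W$ and as zero on every other summand $\mathcal{H}_{W'}$ with $W' \neq W$. Because the $\mathcal{H}_W$'s are pairwise orthogonal and each restriction has norm at most one, the family $\{r_{e_j}\mid_{\mathcal{H}_W}\}_{W\in \mathcal{X}_2}$ has pairwise orthogonal initial and final subspaces. Hence the (a priori formal) sum in (5.1.14) converges in the strong operator topology, and therefore in the weak operator topology used to define $M_2$. The resulting operator sends each basis vector $\xi_W$ of $\mathcal{F}_2$ to $\xi_W \otimes e_j$, which by (4.2.4) is exactly the action of $r_{e_j}$ on $\mathcal{F}_2$. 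This yields the unitary equivalence (5.1.14) via $\Phi \, r_{e_j}\, \Phi^{-1} = R_j$.

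Finally, the claim (5.1.15) follows by taking adjoints on both sides of (5.1.14). Since $L_w^{*} = L_{w^{-1}}$ in $M_2$, one has $L_{(W,Wj)}^{*} = L_{(Wj,W)}$, and since $\Phi$ is unitary, $r_{e_j}^{*} \overset{\text{U.E}}{=} R_j^{*}$, which is precisely the sum in (5.1.15) indexed by edges of the shadow $\mathcal{T}_2^{-1}$. The only delicate point in the entire argument is the convergence of the infinite sum defining $R_j$, and this is handled cleanly by the pairwise orthogonality of the $\mathcal{H}_W$'s together with the uniform bound $\|r_{e_j}\mid_{\mathcal{H}_W}\| \le 1$ inherited from $\|r_{e_j}\| = 1$ (via (4.2.6)).
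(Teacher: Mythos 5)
Your proof is correct and follows essentially the same route as the paper's: apply the preceding lemma to identify each summand $L_{(W,Wj)}$ with the restriction $r_{e_j}\mid_{\mathcal{H}_W}$, assemble these over the orthogonal decomposition $\mathcal{F}_2 = \underset{W\in\mathcal{X}_2}{\oplus}\,\mathcal{H}_W$ to recover $r_{e_j}$, and take adjoints (using $L_{(W,Wj)}^{*}=L_{(Wj,W)}$) to get (5.1.15). The only difference is that you explicitly justify the strong-operator convergence of the infinite sum via the pairwise orthogonality of the initial and final subspaces, a point the paper's proof leaves implicit.
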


\begin{proof}
\strut By the previous lemma, the 2-tree operators $L_{(W,Wj)}$ induced by
the edges $(W,Wj)$ are unitarily equivalent to the restrictions $%
r_{e_{j}}\mid _{\mathcal{H}_{k}}$ on the vertex space $H_{V}$ $=$ $\mathcal{F%
}_{2},$ whenever $\left| W\right| $ $=$ $k,$ for $k$ $\in $ $\Bbb{N}.$
Therefore, the right creation operators $r_{e_{j}}$ induced by the natural
basis-elements $e_{j}$ of $\Bbb{C}^{\oplus 2}$ are unitarily equivalent to
the elements $R_{j}$ of the graph von Neumann algebra $M_{2}$ of the
2-regular tree $\mathcal{T}_{2},$ in the sense of (5.1.14), on $\mathcal{F}%
_{2},$ for all $j$ $=$ $1,$ $2.$

So, the adjoint $r_{e_{j}}^{*},$ the right annihilation operators induced by 
$e_{j},$ are unitarily equivalent to the adjoints $R_{j}^{*}$ of $R_{j},$ in
the sense of (5.1.15), in $M_{2}.$
\end{proof}

\strut The above theorem shows that the products

\begin{center}
$r_{e_{j_{1}}}^{s_{1}}$ $r_{e_{j_{2}}}^{s_{2}}$ ... $r_{e_{j_{k}}}^{s_{k}}$
\end{center}

of right creation operators and right annihilation operators, for $j_{1},$
..., $j_{k}$ $\in $ $\{1,$ $2\},$ and for $s_{1},$ ..., $s_{k}$ $\in $ $\{1,$
$*\},$ is also unitarily equivalent to the elements

\begin{center}
$R_{j_{1}}^{s_{1}}$ $R_{j_{2}}^{s_{2}}$ ... $R_{j_{3}}^{s_{n}}$
\end{center}

of the graph von Neumann algebra $M_{2}$ of the 2-regular tree $\mathcal{T}%
_{2},$ where $t$ is a certain constant and $R_{j_{1}},$ ..., $R_{j_{k}}$ are
given as in (5.1.14).

Therefore, by the above theorem and by the very above discussion, we can
obtain the following corollary.

\begin{corollary}
The right Toeplitz algebra $Toep^{R}\left( \Bbb{C}^{\oplus 2}\right) ,$ in
the sense of Section 4.2, is a $C^{*}$-subalgebra of the graph von Neumann
algebra $M_{2}$ of the 2-regular tree $\mathcal{T}_{2}.$ $\square $
\end{corollary}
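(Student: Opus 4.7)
The plan is to reduce the corollary directly to the preceding theorem by showing that the generating set $\{r_h : h \in \mathbb{C}^{\oplus 2}\}$ of $Toep^{R}(\mathbb{C}^{\oplus 2})$ can be replaced, up to $C^{*}$-equivalence, by the two operators $r_{e_{1}}, r_{e_{2}}$ associated with the natural basis, and then to transport the resulting $C^{*}$-algebra inside $M_{2}$ via the unitary equivalences established in (5.1.14) and (5.1.15).

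First, I would verify from the defining relations (4.2.4) and (4.2.5) that the assignment $h \mapsto r_{h}$ is complex-linear in $h$; this is immediate because tensor products are multilinear, so
\begin{center}
$r_{c_{1}h_{1}+c_{2}h_{2}}(\xi_{1}\otimes\cdots\otimes\xi_{n}) = c_{1}\,r_{h_{1}}(\xi_{1}\otimes\cdots\otimes\xi_{n}) + c_{2}\,r_{h_{2}}(\xi_{1}\otimes\cdots\otimes\xi_{n})$
\end{center}
on each summand $(\mathbb{C}^{\oplus 2})^{\otimes n}$ of $\mathcal{F}_{2}$, and likewise on $\Omega$. Consequently, since every $h \in \mathbb{C}^{\oplus 2}$ is a linear combination $h = c_{1}e_{1} + c_{2}e_{2}$ of the natural basis vectors, every right creation operator $r_{h}$ lies in the linear span of $r_{e_{1}}$ and $r_{e_{2}}$. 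Thus
\begin{center}
$Toep^{R}(\mathbb{C}^{\oplus 2}) = C^{*}\bigl(\{r_{h} : h \in \mathbb{C}^{\oplus 2}\}\bigr) = C^{*}\bigl(r_{e_{1}},\,r_{e_{2}}\bigr)$
\end{center}
as a $C^{*}$-subalgebra of $B(\mathcal{F}_{2})$.

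Next, I would invoke the preceding theorem. It asserts that, under the identification $H_{V} \overset{\text{Hilbert}}{=} \mathcal{F}_{2}$ of (5.2.6), the right creation operators $r_{e_{j}}$ are unitarily equivalent to the elements $R_{j}$ of $M_{2}$ defined in (5.1.14), and that their adjoints $r_{e_{j}}^{*}$ are unitarily equivalent to $R_{j}^{*}$ in the sense of (5.1.15). Let $U : \mathcal{F}_{2} \to H_{V}$ denote the underlying Hilbert space isomorphism implementing this equivalence (extended trivially to a partial isometry $H_{2} \to H_{2}$ by taking the zero operator on $H_{V}^{\perp}$ where needed). Conjugation by $U$ is a $*$-isomorphism from $B(\mathcal{F}_{2})$ onto $B(H_{V})$ that carries the generators $r_{e_{j}}, r_{e_{j}}^{*}$ to $R_{j}, R_{j}^{*}$. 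Hence it restricts to a $*$-isomorphism
\begin{center}
$\Psi : Toep^{R}(\mathbb{C}^{\oplus 2}) \longrightarrow C^{*}(R_{1}, R_{2}, R_{1}^{*}, R_{2}^{*})$.
\end{center}

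Finally, since each $R_{j}$ is defined as a (WOT-convergent) sum of generating graph operators $L_{(W, Wj)} \in M_{2}$, we have $R_{1}, R_{2} \in M_{2}$, so the $C^{*}$-algebra they generate satisfies
\begin{center}
$C^{*}(R_{1}, R_{2}, R_{1}^{*}, R_{2}^{*}) \subseteq M_{2}$.
\end{center}
Combining with $\Psi$ shows that $Toep^{R}(\mathbb{C}^{\oplus 2})$ embeds as a $C^{*}$-subalgebra of $M_{2}$, exactly as in the parallel conclusion at the end of Section 5.1 for the classical Toeplitz algebra $\mathcal{U}_{1} \subset M_{1}$. The one technical point requiring care, and the only real obstacle, is the precise meaning of ``$C^{*}$-subalgebra'' when $M_{2}$ acts on all of $H_{2}$ while $Toep^{R}(\mathbb{C}^{\oplus 2})$ naturally acts on $\mathcal{F}_{2} \cong H_{V}$; this is handled exactly as in the 1-tree case, by passing to the action $\alpha$ analogous to (5.1.4) that represents $M_{2}$ on $H_{V}$ and observing that $\Psi$ identifies $Toep^{R}(\mathbb{C}^{\oplus 2})$ with $\alpha(C^{*}(R_{1}, R_{2}, R_{1}^{*}, R_{2}^{*}))$.
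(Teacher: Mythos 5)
Your proposal is correct and follows essentially the same route as the paper, which justifies the corollary in one line by citing the preceding theorem ($r_{e_j}\overset{\text{U.E}}{=}R_j\in M_{2}$) and the observation that products $r_{e_{j_1}}^{s_1}\cdots r_{e_{j_k}}^{s_k}$ correspond to $R_{j_1}^{s_1}\cdots R_{j_k}^{s_k}$. Your added step verifying that $h\mapsto r_h$ is linear, so that $Toep^{R}(\Bbb{C}^{\oplus 2})=C^{*}(r_{e_1},r_{e_2})$, is a detail the paper leaves implicit, and your flagging of the $H_{V}$-versus-$H_{2}$ identification matches how the paper handles the analogous point for $\mathcal{U}_{1}\subset M_{1}$.
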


\strut Thus, by Section 4.2 and by the above corollary, we can obtain the
following theorem.

\begin{theorem}
\strut Let $H$ be a Hilbert space with its dimension $\dim H$ $=$ $2,$ and
let $Toep(H)$ be the Toeplitz algebra over $H.$ Then $Toep(H)$ is a $C^{*}$%
-subalgebra of the graph von Neumann algebra $M_{2}$ of the 2-regular tree $%
\mathcal{T}_{2}.$
\end{theorem}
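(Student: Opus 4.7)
The plan is to deduce the theorem almost immediately from the preceding corollary (which places $Toep^{R}(\mathbb{C}^{\oplus 2})$ inside $M_{2}$) together with the anti-$*$-isomorphism of Theorem~4.2.6, after bridging the gap between left and right Toeplitz algebras as abstract $C^*$-algebras. First I would reduce to the model case $H=\mathbb{C}^{\oplus 2}$: any Hilbert-space isomorphism $U\colon H\to \mathbb{C}^{\oplus 2}$ lifts to a unitary $\mathcal{F}(U)=\bigoplus_{n\ge 0}U^{\otimes n}\colon \mathcal{F}_{H}\to \mathcal{F}_{2}$, which intertwines the creation operators via $\mathcal{F}(U)\,l_{h}\,\mathcal{F}(U)^{*}=l_{Uh}$. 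Conjugation by $\mathcal{F}(U)$ is therefore a $*$-isomorphism $Toep(H)\longrightarrow Toep(\mathbb{C}^{\oplus 2})$, so it suffices to exhibit $Toep(\mathbb{C}^{\oplus 2})$ as a $C^{*}$-subalgebra of $M_{2}$.

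Next I would convert Theorem~4.2.6 into what is actually needed here. The content of that theorem is that the specific map $l_{h}\mapsto r_{h}^{*}$ is an anti-$*$-isomorphism. However, the key observation is that both generating families $\{l_{e_{1}},l_{e_{2}}\}$ (see (4.2.3)) and $\{r_{e_{1}},r_{e_{2}}\}$ (see (4.2.6)) satisfy the same Cuntz--Toeplitz relations $s_{i}^{*}s_{j}=\delta_{ij}\,1$, and each generates a faithful vacuum representation on its respective Fock space. By the universal property of the $C^{*}$-algebra presented by two isometries with orthogonal ranges, the generator-preserving assignment $l_{e_{j}}\mapsto r_{e_{j}}$ extends uniquely to a genuine $*$-isomorphism $\Psi\colon Toep(\mathbb{C}^{\oplus 2})\longrightarrow Toep^{R}(\mathbb{C}^{\oplus 2})$. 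Composing $\Psi$ with the inclusion supplied by the preceding corollary yields the required $C^{*}$-embedding
\[
Toep(H)\;\xrightarrow{\;\cong\;}\;Toep(\mathbb{C}^{\oplus 2})\;\xrightarrow{\;\Psi\;}\;Toep^{R}(\mathbb{C}^{\oplus 2})\;\hookrightarrow\;M_{2}.
\]

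The only real obstacle is the subtlety just described: Theorem~4.2.6 produces an \emph{anti-}$*$-isomorphism, whereas the statement to be proved is about an ordinary $*$-subalgebra inclusion. The remedy is to abandon the canonical map $l_{h}\mapsto r_{h}^{*}$ (which is basis-free but conjugate-linear in $h$) in favor of the basis-dependent map $l_{e_{j}}\mapsto r_{e_{j}}$; this sacrifices naturality in $h$ but restores multiplicativity, and that is precisely what is needed to thread the construction through the preceding corollary. Once this bookkeeping is settled, no further computation on $M_{2}$ or on $\mathcal{T}_{2}$ is required, since the structural work has already been carried out in Sections~4.2 and~5.2.
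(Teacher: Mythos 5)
Your proposal is correct and follows the same overall strategy as the paper's proof --- reduce to $H=\Bbb{C}^{\oplus 2}$, invoke the preceding corollary to place $Toep^{R}(\Bbb{C}^{\oplus 2})$ inside $M_{2}$, and then pass between the left and right Toeplitz algebras --- but it diverges at the crucial bridging step, and for good reason. The paper simply cites the anti-$*$-isomorphism $\Phi$ of Theorem 4.2.6 and concludes at once that $Toep(H)$ is a $C^{*}$-subalgebra of $M_{2}$; strictly speaking, an anti-$*$-isomorphism onto a subalgebra of $M_{2}$ only embeds the opposite algebra $Toep(H)^{\mathrm{op}}$, so the paper's argument leaves a (silent) gap that you correctly identify. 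Your remedy --- replace the anti-multiplicative assignment $l_{h}\mapsto r_{h}^{*}$ by the basis-dependent assignment $l_{e_{j}}\mapsto r_{e_{j}}$, observe that both families satisfy the Cuntz--Toeplitz relations $s_{i}^{*}s_{j}=\delta_{ij}1$ with $\sum_{j}s_{j}s_{j}^{*}=1-P_{\Omega}\neq 1$, and invoke the uniqueness/faithfulness of the Fock representation of the universal $C^{*}$-algebra on these relations --- does produce a genuine $*$-isomorphism $Toep(\Bbb{C}^{\oplus 2})\cong Toep^{R}(\Bbb{C}^{\oplus 2})$, which is what the stated conclusion actually requires. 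The only caveat is that you are importing a nontrivial external fact (Cuntz's uniqueness theorem for the Toeplitz extension); a more elementary route to the same $*$-isomorphism is the order-reversing unitary $\sigma$ on $\mathcal{F}_{2}$ determined by $\sigma(\xi_{1}\otimes\cdots\otimes\xi_{n})=\xi_{n}\otimes\cdots\otimes\xi_{1}$ and $\sigma\Omega=\Omega$, which satisfies $\sigma l_{h}\sigma^{*}=r_{h}$ and hence implements $Toep(\Bbb{C}^{\oplus 2})\cong Toep^{R}(\Bbb{C}^{\oplus 2})$ spatially, with no appeal to universality. Your reduction of a general two-dimensional $H$ to $\Bbb{C}^{\oplus 2}$ via the second-quantized unitary $\mathcal{F}(U)=\oplus_{n}U^{\otimes n}$ is likewise spelled out more carefully than in the paper, which asserts that step without argument. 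In short: same skeleton, but your version repairs the anti-$*$ versus $*$ discrepancy that the paper's own proof passes over.
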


\begin{proof}
\strut By the above corollary, the right Toeplitz algebra $Toep^{R}\left( 
\Bbb{C}^{\oplus 2}\right) $ over $\Bbb{C}^{\oplus 2}$ is a $C^{*}$%
-subalgebra of $M_{2}.$ In Section 4.2, we showed that, for any arbitrary
Hilbert space $H,$ the right Toeplitz algebra $Toep^{R}(H)$ and the Toeplitz
algebra $Toep(H)$ are anti-$*$-isomorphic from each other.

If a Hilbert space $H$ has its dimension, $\dim H$ $=$ $2,$ then it is
Hilbert-space isomorphic to $\Bbb{C}^{\oplus 2},$ and moreover, there exists
an anti-$*$-isomorphism

\begin{center}
$\Phi ^{-1}$ $:$ $Toep^{R}(H)$ $\rightarrow $ $Teop(H),$
\end{center}

where $\Phi $ is an anti-$*$-isomorphism defined in (4.2.7). Therefore, the
right Toeplitz algebra $Toep^{R}\left( \Bbb{C}^{\oplus 2}\right) $ is anti-$%
* $-isomorprhic to the Toeplitz algebra $Toep(H)$ over $H,$ whose dimension $%
\dim H$ $=$ $2.$ So, $Toep(H)$ is a $C^{*}$-subalgebra of $M_{2}.$\strut
\end{proof}

\subsection{$N$-Tree Operators for $N$ $\geq $ $2$}

Now, we will consider $N$-tree operators, where $N$ $\geq $ $2.$ In Section
5.2, we already observed the case where $N$ $=$ $2.$ So, we need to study
for $N$ $>$ $2.$ However, we can obtain the similar results as in Section
5.2, by induction. \strut In this section, we will extend the main results
of Section 5.2 to the general case where $N$ $\geq $ $2.$ i.e., we consider
the relation between generalized Toeplitz operators on the Fock space $%
\mathcal{F}_{N}$ over $\Bbb{C}^{\oplus N},$ and $N$-tree operators in the
graph von Neumann algebra $M_{N}$ of the $N$-regular tree $\mathcal{T}_{N}.$
Again, notice that our extension would be simply done by induction.

Now, we denote the process sending the vertex

\begin{center}
$i_{1}i_{2}$ ... $i_{n}$
\end{center}

\strut in the $n$-th level of $\mathcal{T}_{N}$ to the vertex

\begin{center}
$i_{1}i_{2}$ ... $i_{n}$ $j$
\end{center}

in the $(n$ $+$ $1)$-th level of $\mathcal{T}_{N}$ by $\gamma _{j},$ for all 
$n$ $\in $ $\Bbb{N}$, where $i_{1},$ ..., $i_{n}$ $\in $ $\{1,$ $2,$ ..., $%
N\}.$ i.e.,

\begin{center}
$\gamma _{j}\left( i_{1}i_{2}\text{ ... }i_{n}\right) $ $\overset{def}{=}$ $%
i_{1}$ $i_{2}$ ... $i_{n}$ $j,$
\end{center}

for all $j$ $=$ $1,$ $2,$ ..., $N,$ for all $n$ $\in $ $\Bbb{N}.$ Also, we
can understand these functions $\gamma _{j}$ generates the edges in $%
\mathcal{T}_{2}.$ i.e., the function

\begin{center}
$\gamma _{j}\left( i_{1}\text{ }i_{2}\text{ ... }i_{n}\right) $
\end{center}

can be understood as the edge

\begin{center}
$\left( i_{1}i_{2}\text{ ... }i_{n},\text{ }i_{1}i_{2}\text{ ... }%
i_{n}j\right) ,$
\end{center}

in $\mathcal{T}_{N},$ for $j$ $=$ $1,$ $2,$ ..., $N.$

Now, let $X_{N}$ be the set $\{1,$ $2,$ ..., $N\},$ and let $\mathcal{X}_{N}$
be the set

\begin{center}
$\mathcal{X}_{N}$ $=$ $\{\emptyset \}$ $\cup $ $X_{N}^{*},$
\end{center}

where $X_{N}^{*}$ is the set of all words in $X_{N},$ and $\emptyset $ means
the empty word in $X_{N}.$

We can check that

\begin{center}
$V\left( \mathcal{T}_{N}\right) $ $=$ $\mathcal{X}_{N}$ $=$ $\{\emptyset \}$ 
$\cup $ $\left\{ \gamma _{W}(\mathcal{X}_{N}):W\in X_{2}^{*}\right\} $,
\end{center}

where

\begin{center}
$\gamma $\strut $_{W}$ $=$ $\gamma _{1}\circ \gamma _{j_{2}}\circ $ ... $%
\circ $ $\gamma _{j_{k}},$
\end{center}

whenever $W$ $=$ $j_{1}$ $j_{2}$ ... $j_{k}$ $\in $ $\mathcal{X}_{N}.$
Similar to the case where $N$ $=$ $2,$ we can conclude that

\begin{center}
$H_{V}$ $=$ $l^{2}(\mathcal{X}_{N})$ $=$ $l^{2}\left( V(\mathcal{T}%
_{N})\right) $
\end{center}

is Hilbert-space isomorphic to \emph{the }(\emph{generalized})\emph{\ Fock
space }$\mathcal{F}_{N}$ $=$ $\mathcal{F}(\Bbb{C}^{\oplus N})$\emph{\ over} $%
\Bbb{C}^{\oplus N}.$

\begin{theorem}
Let $H_{V}$ be the vertex space of the graph Hilbert space $H_{N}$ of $%
\mathcal{T}_{N}.$ Then

\begin{center}
$H_{V}$ $\overset{\text{Hilbert}}{=}$ $l^{2}\left( \mathcal{X}_{N}\right) $ $%
\overset{\text{Hilbert}}{=}$ $\mathcal{F}_{N},$
\end{center}

where $\mathcal{F}_{N}$ is the Fock space over $\Bbb{C}^{\oplus N}$. $%
\square $
\end{theorem}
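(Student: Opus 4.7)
The plan is to imitate the proof of Theorem 5.5 verbatim, replacing the alphabet $X_{2}=\{1,2\}$ with $X_{N}=\{1,2,\ldots,N\}$. The only ingredients of the $N=2$ argument that were genuinely two-dimensional were (i) the identification of the basis vectors $\eta_{1}=(1,0),\,\eta_{2}=(0,1)$ with the generators of $X_{2}$, and (ii) the corresponding pair of natural projections on $\mathbb{C}^{\oplus 2}$. Both objects have obvious $N$-fold analogs: the standard orthonormal basis $\{\eta_{1},\ldots,\eta_{N}\}$ of $\mathbb{C}^{\oplus N}$ and the $N$ natural rank-one projections $p_{1},\ldots,p_{N}$ onto its coordinate axes.

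First, I would invoke (4.1.1) to write $H_{V}\overset{\text{Hilbert}}{=}\underset{v\in V(\mathcal{T}_{N})}{\oplus}\mathbb{C}\xi_{v}$, and then use the identification $V(\mathcal{T}_{N})=\mathcal{X}_{N}$ stated just above the theorem to conclude $H_{V}\overset{\text{Hilbert}}{=}l^{2}(\mathcal{X}_{N})=\underset{W\in\mathcal{X}_{N}}{\oplus}\mathbb{C}\xi_{W}$, with orthonormal basis $\mathcal{B}_{V}=\{\xi_{W}:W\in\mathcal{X}_{N}\}$. This is the analog of (5.2.7).

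Next, for each $k\in\mathbb{N}$, I would define a linear map $\Phi_{k}$ on the length-$k$ summand by the basis assignment
\[
\Phi_{k}:\xi_{j_{1}j_{2}\cdots j_{k}}\longmapsto \eta_{j_{1}}\otimes\eta_{j_{2}}\otimes\cdots\otimes\eta_{j_{k}},
\]
where $j_{1},\ldots,j_{k}\in X_{N}$ and $\eta_{j}$ is the $j$-th standard basis vector of $\mathbb{C}^{\oplus N}$. Then, exactly as in (5.2.10), I set $\Phi=i_{d}\oplus\bigl(\underset{n=1}{\overset{\infty}{\oplus}}\Phi_{n}\bigr)$ where $i_{d}$ is the identity on the length-$0$ summand $\mathbb{C}\xi_{\emptyset}=\mathbb{C}$. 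Using the decomposition
\[
\underset{W\in\mathcal{X}_{N}}{\oplus}\mathbb{C}\xi_{W}=\mathbb{C}\oplus\Bigl(\underset{k=1}{\overset{\infty}{\oplus}}\bigl(\underset{W\in\mathcal{X}_{N}(k)}{\oplus}\mathbb{C}\xi_{W}\bigr)\Bigr),\qquad \mathcal{X}_{N}(k)=\{W\in\mathcal{X}_{N}:|W|=k\},
\]
and the fact that $\{\eta_{j_{1}}\otimes\cdots\otimes\eta_{j_{k}}:(j_{1},\ldots,j_{k})\in X_{N}^{k}\}$ is an orthonormal basis of $(\mathbb{C}^{\oplus N})^{\otimes k}$, the map $\Phi$ sends the orthonormal basis $\mathcal{B}_{V}$ bijectively onto an orthonormal basis of $\mathcal{F}_{N}=\underset{k=0}{\overset{\infty}{\oplus}}(\mathbb{C}^{\oplus N})^{\otimes k}$.

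Finally, since $\Phi$ is a basis-element-preserving bijection between two orthonormal bases of Hilbert spaces, it extends (uniquely) to a unitary isomorphism $\Phi:l^{2}(\mathcal{X}_{N})\to\mathcal{F}_{N}$, yielding the desired chain $H_{V}\overset{\text{Hilbert}}{=}l^{2}(\mathcal{X}_{N})\overset{\text{Hilbert}}{=}\mathcal{F}_{N}$. The argument is entirely parallel to the $N=2$ case; the only point requiring mild care is the bookkeeping of the length-$k$ summands, and there is no genuine obstacle since the correspondence $W=j_{1}\cdots j_{k}\leftrightarrow \eta_{j_{1}}\otimes\cdots\otimes\eta_{j_{k}}$ is manifestly bijective for any $N$. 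Thus the main ``difficulty'' is purely notational inflation from $2$ to $N$.
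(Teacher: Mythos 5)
Your proposal is correct and is exactly the route the paper intends: the paper states this theorem with no written proof (marking it $\square$) precisely because it is the verbatim generalization of the $N=2$ argument of Section 5.2, which you have faithfully reproduced with the basis-preserving unitary $\Phi=\bigoplus_{n}\Phi_{n}$ matching $\xi_{j_{1}\cdots j_{k}}$ to $\eta_{j_{1}}\otimes\cdots\otimes\eta_{j_{k}}$. Nothing further is needed.
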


Let $p_{1},$ ..., $p_{N}$ be the natural projection on $\Bbb{C}^{\oplus N},$
defined by

\begin{center}
$p_{j}$ $=$ $\left( 
\begin{array}{lllllll}
0 &  &  &  &  &  & 0 \\ 
& \ddots &  &  &  &  &  \\ 
&  & 0 &  &  &  &  \\ 
&  &  & \frame{1} &  &  &  \\ 
&  &  &  & 0 &  &  \\ 
&  &  &  &  & \ddots &  \\ 
0 &  &  &  &  &  & 0
\end{array}
\right) ,$
\end{center}

having only nonzero entry $(j,$ $j)$-entry $1$, expressed by \frame{1}
above, for all $j$ $=$ $1,$ ..., $N.$

Let $e_{1},$ ..., $e_{N}$ be the natural basis elements of $\Bbb{C}^{\oplus
N},$ i.e.,

\begin{center}
$e_{j}$ $=$ $\left( 0,\text{ ...},\text{ }0,\text{ }\underset{j\text{-th}}{1}%
,\text{ }0,\text{ ..., }0\right) ,$
\end{center}

for all $j$ $=$ $1,$ ..., $N.$

\strut The above theorem characterize the vertex space $H_{V}$ in the graph
Hilbert space $H_{N}$ of the $N$-regular tree $\mathcal{T}_{N}$ by the Fock
space $\mathcal{F}_{N}$ over $\Bbb{C}^{\oplus N}.$ So, from now on, we use $%
H_{V}$ and $\mathcal{F}_{N},$ alternatively.

\begin{lemma}
\strut Let $(W,$ $Wj)$ be an edge connecting a vertex

\begin{center}
$W$ $=$ $j_{1}j_{2}$ ... $j_{k}$ $\in $ $\mathcal{X}_{N}$
\end{center}

to a vertex

\begin{center}
$Wj$ $=$ $j_{1}j_{2}$ ... $j_{k}$ $j$ $\in $ $\mathcal{X}_{N}$,
\end{center}

for $j$ $=$ $1,$ $...,$ $N,$ for $k$ $\in $ $\Bbb{N}.$ Then the
corresponding $N$-tree operator $L_{(W,Wj)}$ induced by an edge $(W,$ $Wj)$
is unitarily equivalent to the restriction $r_{e_{j}}\mid _{\mathcal{H}_{W}}$
of the right creation operator $r_{e_{j}}$ on $\mathcal{F}_{N}$ induced by $%
e_{j},$ for $j$ $=$ $1,$ $...,$ $N,$ more precisely,

\begin{center}
$L_{(W,\text{ }Wj)}$ $\overset{\text{U.E}}{=}$ $r_{e_{j}}\mid _{\mathcal{H}%
_{W}}$ $:$ $\mathcal{H}_{W}$ $\rightarrow $ $\mathcal{H}_{W}(j)$
\end{center}

on $\mathcal{F}_{N}$ $=$ $H_{V},$ for all $j$ $=$ $1,$ $...,$ $N,$ where

\begin{center}
$\mathcal{H}_{W}$ $=$ $p_{j_{1}}\left( \Bbb{C}^{\oplus 2}\right) $ $\otimes $
... $\otimes $ $p_{j_{k}}\left( \Bbb{C}^{\oplus 2}\right) $
\end{center}

and

\begin{center}
$\mathcal{H}_{W}(j)$ $=$ $p_{j_{1}}\left( \Bbb{C}^{\oplus }\right) $ $%
\otimes $ ... $\otimes $ $p_{j_{k}}\left( \Bbb{C}^{\oplus 2}\right) $ $%
\otimes $ $p_{j}\left( \Bbb{C}^{\oplus 2}\right) .$
\end{center}

$\square $
\end{lemma}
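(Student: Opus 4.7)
The plan is to imitate the proof of Lemma~5.7 (the $N=2$ case) nearly verbatim, with the role of $\mathcal{F}_2$ replaced by $\mathcal{F}_N$ and the natural basis of $\Bbb{C}^{\oplus 2}$ replaced by the natural basis $e_1,\ldots,e_N$ of $\Bbb{C}^{\oplus N}$. The key input is the immediately preceding theorem, which supplies the Hilbert-space isomorphism $\Phi: H_V \to \mathcal{F}_N$ sending each vertex basis vector $\xi_W = \xi_{j_1\ldots j_k}$ to $e_{j_1}\otimes\cdots\otimes e_{j_k}$ in the summand $(\Bbb{C}^{\oplus N})^{\otimes k}$ of $\mathcal{F}_N$. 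Under $\Phi$, the one-dimensional subspace $\mathcal{H}_W$ defined in the statement corresponds precisely to the line $\Bbb{C}\xi_W$, and its orthogonal complement in $\mathcal{F}_N$ corresponds to $H_V \ominus \Bbb{C}\xi_W$.

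First I would compute the action of $L_{(W,Wj)}$ on the vertex space via the natural representation $\alpha$ of $M_N$ on $H_V$, which is the $N$-regular analogue of (5.1.4). The canonical multiplication rule for $L_w$ forces $L_{(W,Wj)}$ to send the vertex basis vector $\xi_W$ to $\xi_{Wj}$ and to annihilate every other vertex basis vector, by the same admissibility/reduction argument already used in the proof of Lemma~5.7. Second, by the defining formula (4.2.4) of the right creation operator, $r_{e_j}$ appends $e_j$ on the right of any pure tensor, so its restriction $r_{e_j}\mid_{\mathcal{H}_W}$ sends $e_{j_1}\otimes\cdots\otimes e_{j_k}$ to $e_{j_1}\otimes\cdots\otimes e_{j_k}\otimes e_j$ and annihilates everything orthogonal to $\mathcal{H}_W$. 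Pulling $r_{e_j}\mid_{\mathcal{H}_W}$ back along $\Phi$ therefore produces exactly the operator computed in the first step, which yields the asserted unitary equivalence $L_{(W,Wj)} \overset{\text{U.E}}{=} r_{e_j}\mid_{\mathcal{H}_W}$.

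There is essentially no obstacle beyond bookkeeping: the proof is a basis-element verification, and everything in the $N=2$ argument of Lemma~5.7 extends \emph{mutatis mutandis} once the identification $H_V \overset{\text{Hilbert}}{=} \mathcal{F}_N$ is available. The only item that warrants care is the indexing convention, namely that the decomposition of $(\Bbb{C}^{\oplus N})^{\otimes k}$ into the one-dimensional subspaces $\mathcal{H}_W = p_{j_1}(\Bbb{C}^{\oplus N}) \otimes \cdots \otimes p_{j_k}(\Bbb{C}^{\oplus N})$ is indexed exactly by the length-$k$ words $W = j_1\ldots j_k \in X_N^{*}$, which matches the vertex set at level $k$ of $\mathcal{T}_N$ under $\Phi$.
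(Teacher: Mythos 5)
Your proposal is correct and coincides with the paper's own treatment: the paper states this lemma with no written proof (only a $\square$), explicitly remarking at the start of Section 5.3 that the $N\geq 2$ results are obtained from the $N=2$ case ``by induction,'' i.e.\ \emph{mutatis mutandis} from the proof of the corresponding lemma in Section 5.2. Your two-step verification (the action of $L_{(W,Wj)}$ on the vertex basis, matched against the restriction of $r_{e_j}$ under the isomorphism $H_{V}\overset{\text{Hilbert}}{=}\mathcal{F}_{N}$) is exactly that argument.
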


\strut By the previous lemma, we can obtain the following theorem.

\begin{theorem}
Let $r_{e_{j}}$ be the right creation operator on the Fock space $\mathcal{F}%
_{N}$ over $\Bbb{C}^{\oplus N},$ induced by the natural basis vector $e_{j},$
for $j$ $=$ $1,$ $...,$ $N.$ Then it is unitarily equivalent to the element $%
R_{j}$ of the graph von Neumann algebra $M_{N}$ of the $N$-regular tree $%
\mathcal{T}_{N},$ where

\begin{center}
$R_{j}$ $\overset{def}{=}$ $\underset{(W,Wj)\in E\left( \mathcal{T}%
_{N}\right) ,W\in \mathcal{X}_{N}}{\sum }$ $L_{(W,Wj)}$ $\in $ $M_{2},$
\end{center}

for $j$ $=$ $1,$ ..., $N,$ where $L_{(W,Wj)}$ are the $N$-tree operators
induced by edges $(W,$ $Wj)$ $\in $ $E\left( \mathcal{T}_{N}\right) .$ Also,
the right annihilation operator $r_{e_{j}}^{*}$ is unitarily equivalent to
the adjoint $R_{j}^{*}$ of $R_{j},$ and hence

\begin{center}
$r_{e_{j}}^{*}$ $\overset{\text{U.E}}{=}$ $\underset{(W,Wj)\in E(\mathcal{T}%
_{N}^{-1}),\,W\in \mathcal{X}_{N}}{\sum }$ $L_{(Wj,W)}$,
\end{center}

for $j$ $=$ $1,$ $...,$ $N,$ where $\mathcal{T}_{N}^{-1}$ means the shadow
of $\mathcal{T}_{N}.$ $\square $
\end{theorem}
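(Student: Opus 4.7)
The plan is to derive this theorem as a direct globalization of the preceding lemma. The lemma has already identified each generator $L_{(W,Wj)}$ with the restriction $r_{e_j}\mid_{\mathcal{H}_W}$ of the right creation operator to one of the finite-dimensional subspaces $\mathcal{H}_W \subset \mathcal{F}_N$. What remains is to assemble these local pieces into the global operator $r_{e_j}$ by summing over all vertices $W \in \mathcal{X}_N$, and to identify the adjoint relation separately.

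First I would record the orthogonal decomposition
\[
\mathcal{F}_N \;\overset{\text{Hilbert}}{=}\; \bigoplus_{W \in \mathcal{X}_N} \mathcal{H}_W,
\]
which is just the re-indexing (via the isomorphism $\Phi$ constructed in the proof of Theorem characterizing $H_V$) of the basis decomposition $H_V = \bigoplus_{W \in \mathcal{X}_N} \mathbb{C}\,\xi_W$. The key point is that the Hilbert basis vector $\xi_W$ corresponds under $\Phi$ to a generator of the one-dimensional subspace $\mathcal{H}_W = p_{j_1}\mathbb{C}^{\oplus N} \otimes \cdots \otimes p_{j_k}\mathbb{C}^{\oplus N}$ sitting inside $\left(\mathbb{C}^{\oplus N}\right)^{\otimes k}$, where $W = j_1 \cdots j_k$.

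Next, I would observe that under this decomposition the right creation operator $r_{e_j}$ acts blockwise: for $\eta \in \mathcal{H}_W$, one has $r_{e_j}\eta = \eta \otimes e_j \in \mathcal{H}_{Wj}$, so that
\[
r_{e_j} \;=\; \sum_{W \in \mathcal{X}_N} \, r_{e_j}\bigm|_{\mathcal{H}_W},
\]
where the right-hand side converges in the strong operator topology because the summands have mutually orthogonal initial subspaces (and $r_{e_j}$ is a partial isometry with $\|r_{e_j}\| = 1$). By the preceding lemma, each summand is unitarily equivalent via $\Phi$ to $L_{(W,Wj)} \in M_N$. Summing over $W \in \mathcal{X}_N$ in the SOT yields a well-defined element of the von Neumann algebra $M_N$, and this is precisely
\[
R_j \;=\; \sum_{(W,Wj) \in E(\mathcal{T}_N),\; W \in \mathcal{X}_N} L_{(W,Wj)}.
\]
Hence $r_{e_j} \overset{\text{U.E}}{=} R_j$ on $H_V = \mathcal{F}_N$, establishing the first assertion.

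For the statement about $r_{e_j}^*$, I would simply take adjoints. Since unitary equivalence preserves adjoints, and since $L_{(W,Wj)}^* = L_{(W,Wj)^{-1}} = L_{(Wj,W)}$ by the $*$-structure of the groupoid (where the shadow of the edge $(W,Wj)$ is $(Wj,W) \in E(\mathcal{T}_N^{-1})$), we obtain
\[
r_{e_j}^* \;\overset{\text{U.E}}{=}\; R_j^* \;=\; \sum_{(Wj,W)\in E(\mathcal{T}_N^{-1}),\; W \in \mathcal{X}_N} L_{(Wj,W)}.
\]
The only point requiring care is the SOT-convergence of the defining sum for $R_j$, and the verification that the intertwining unitary $\Phi$ from the vertex-space characterization actually respects the blockwise action — both are essentially bookkeeping, so I do not expect them to be a real obstacle. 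The substantive work is entirely contained in the preceding lemma, and this theorem is its natural globalization by direct summation.
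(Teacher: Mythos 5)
Your proposal is correct and follows essentially the same route as the paper: the paper likewise deduces the theorem by summing the unitary equivalences $L_{(W,Wj)}\overset{\text{U.E}}{=}r_{e_{j}}\mid_{\mathcal{H}_{W}}$ from the preceding lemma over all $W\in\mathcal{X}_{N}$ and then taking adjoints. Your explicit attention to the orthogonal decomposition $\mathcal{F}_{N}=\oplus_{W}\mathcal{H}_{W}$ and to the strong-operator convergence of the defining sum for $R_{j}$ is a welcome addition that the paper leaves implicit.
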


\strut The above theorem shows that the products

\begin{center}
$r_{e_{j_{1}}}^{s_{1}}$ $r_{e_{j_{2}}}^{s_{2}}$ ... $r_{e_{j_{k}}}^{s_{k}}$
\end{center}

of right creation operators and right annihilation operators, for $j_{1},$
..., $j_{k}$ $\in $ $\{1,$ $...,$ $N\},$ and for $s_{1},$ ..., $s_{k}$ $\in $
$\{1,$ $*\},$ is also unitarily equivalent to the elements

\begin{center}
$R_{j_{1}}^{s_{1}}$ $R_{j_{2}}^{s_{2}}$ ... $R_{j_{3}}^{s_{n}}$
\end{center}

of the graph von Neumann algebra $M_{N}$ of the $N$-regular tree $\mathcal{T}%
_{N}$.

Therefore, by the above theorem and by the very above discussion, we can
obtain the following corollary.

\begin{corollary}
The right Toeplitz algebra $Toep^{R}\left( \Bbb{C}^{\oplus N}\right) ,$ in
the sense of Section 4.2, is a $C^{*}$-subalgebra of the graph von Neumann
algebra $M_{N}$ of the $N$-regular tree $\mathcal{T}_{N},$ for $N$ $\in $ $%
\Bbb{N}$ $\setminus $ $\{1\}.$ $\square $
\end{corollary}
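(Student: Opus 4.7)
The plan is to deduce the corollary directly from the preceding theorem, which identifies each right creation operator $r_{e_j}$ on $\mathcal{F}_N$ with the element $R_j \in M_N$ (up to unitary equivalence), for $j = 1, \ldots, N$. The first step is to observe that the generating set $\{r_h : h \in \mathbb{C}^{\oplus N}\}$ of $Toep^R(\mathbb{C}^{\oplus N})$ can be reduced to the finite set $\{r_{e_1}, \ldots, r_{e_N}\}$, because the map $h \mapsto r_h$ is linear on $\mathbb{C}^{\oplus N}$ (directly from the defining formula (4.2.4)), so that any $r_h$ with $h = \sum_{j=1}^N t_j e_j$ can be written as $\sum_{j=1}^N t_j \, r_{e_j}$. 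Hence
\begin{center}
$Toep^R(\mathbb{C}^{\oplus N}) = C^*\bigl(\{r_{e_1}, \ldots, r_{e_N}\}\bigr)$
\end{center}
as a $C^*$-subalgebra of $B(\mathcal{F}_N)$.

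Next, I would invoke the previous theorem: there exists a unitary $U : \mathcal{F}_N \to H_V \subseteq H_N$ (the Hilbert-space isomorphism identifying the vertex space $H_V$ with $\mathcal{F}_N$, established in the theorem of Section 5.3 generalizing (5.2.6)) such that $U \, r_{e_j} \, U^* = R_j$ on $H_V$, where $R_j = \sum_{(W, Wj) \in E(\mathcal{T}_N), W \in \mathcal{X}_N} L_{(W,Wj)} \in M_N$. Taking the $C^*$-algebra generated by the unitarily transformed generators, I obtain
\begin{center}
$U \, Toep^R(\mathbb{C}^{\oplus N}) \, U^* = C^*(\{R_1, \ldots, R_N\}) \subseteq M_N,$
\end{center}
because each $R_j$ lies in $M_N$ (as a norm-convergent sum of generating graph operators $L_w$, $w \in \mathbb{G}_N$) and $M_N$ is a von Neumann algebra, hence norm-closed.

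The conclusion is that $Toep^R(\mathbb{C}^{\oplus N})$ is $*$-isomorphic via the unitary conjugation $T \mapsto U T U^*$ to a $C^*$-subalgebra of $M_N$, which establishes the claim.

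The one point requiring care, and the main (minor) obstacle, is the well-definedness of the infinite sum $R_j = \sum_{W \in \mathcal{X}_N} L_{(W,Wj)}$ as an element of $M_N$. Although this sum is infinite, the operators $L_{(W,Wj)}$ have pairwise orthogonal initial projections $L_{(W,Wj)}^* L_{(W,Wj)} = L_W$ (for distinct $W$) and pairwise orthogonal final projections $L_{(W,Wj)} L_{(W,Wj)}^* = L_{Wj}$, so the sum converges strongly (hence in the weak operator topology) to a partial isometry, and therefore lies in the weakly closed algebra $M_N = \overline{\mathbb{C}[L(\mathbb{G}_N)]}^w$. The remaining verifications—that $r_{e_j}$ is unitarily equivalent to this $R_j$—are precisely the content of the theorem already proved.
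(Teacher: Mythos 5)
Your proof is correct and follows essentially the same route as the paper, which states the corollary as an immediate consequence of the preceding theorem and discussion; you supply two details the paper omits (the reduction of the generating set to $\{r_{e_1},\dots ,r_{e_N}\}$ via linearity of $h\mapsto r_h$, and the convergence in $M_N$ of the infinite sums defining $R_j$), both of which are sound. The only quibble is that with the paper's reduction rule (RR) one gets $L_{(W,Wj)}^{*}L_{(W,Wj)}=L_{Wj}$ and $L_{(W,Wj)}L_{(W,Wj)}^{*}=L_{W}$, so your initial and final projections are interchanged, but since both families are pairwise orthogonal this does not affect the argument.
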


\strut Thus, by Section 4.2 and by the above corollary, we can obtain the
following theorem.

\begin{theorem}
\strut Let $H$ be a Hilbert space with its dimension $\dim H$ $=$ $N,$ for $%
N $ $\in $ $\Bbb{N}$ $\setminus $ $\{1\},$ and let $Toep(H)$ be the Toeplitz
algebra over $H.$ Then $Toep(H)$ is a $C^{*}$-subalgebra of the graph von
Neumann algebra $M_{N}$ of the $N$-regular tree $\mathcal{T}_{N}.$
\end{theorem}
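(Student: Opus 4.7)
The plan is to imitate the proof of the $N=2$ theorem at the end of Section 5.2, which now extends to arbitrary $N \geq 2$ thanks to the machinery built in Section 5.3. The strategy is a three-step chain of identifications: first transfer $Toep(H)$ to $Toep(\Bbb{C}^{\oplus N})$ via the finite-dimensional Hilbert-space isomorphism $H \cong \Bbb{C}^{\oplus N}$; second, pass from $Toep(\Bbb{C}^{\oplus N})$ to $Toep^{R}(\Bbb{C}^{\oplus N})$ using the anti-$*$-isomorphism $\Phi$ of (4.2.7); and third, embed $Toep^{R}(\Bbb{C}^{\oplus N})$ into $M_{N}$ via the corollary immediately preceding this theorem.

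For the first step, fix a unitary $U : H \to \Bbb{C}^{\oplus N}$. This lifts functorially to a unitary $\widetilde{U} : \mathcal{F}_{H} \to \mathcal{F}_{N}$, defined level-by-level by $\widetilde{U}(h_{1} \otimes \cdots \otimes h_{k}) = (Uh_{1}) \otimes \cdots \otimes (Uh_{k})$ and $\widetilde{U}(\Omega) = \Omega$. A direct check against (4.2.1)--(4.2.2) shows that $\widetilde{U}\, l_{h}\, \widetilde{U}^{*} = l_{Uh}$ for every $h \in H$, so conjugation by $\widetilde{U}$ restricts to a $*$-isomorphism $Toep(H) \cong Toep(\Bbb{C}^{\oplus N})$ between the two $C^{*}$-subalgebras generated by the left creation operators.

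The second and third steps are immediate appeals to existing results: the Theorem in Section 4.2 supplies an anti-$*$-isomorphism $\Phi : Toep(\Bbb{C}^{\oplus N}) \to Toep^{R}(\Bbb{C}^{\oplus N})$, and the preceding corollary identifies $Toep^{R}(\Bbb{C}^{\oplus N})$ as a $C^{*}$-subalgebra of $M_{N}$. Composing the three morphisms then realizes $Toep(H)$ inside $M_{N}$, in the same generalized (anti-$*$-isomorphic) sense already employed for the $N=2$ case. No substantial obstacle arises, since Section 5.3 has already produced the concrete edge-by-edge embedding $r_{e_{j}} \overset{\text{U.E}}{=} R_{j} = \sum_{W \in \mathcal{X}_{N}} L_{(W,Wj)}$ at the level of generators; the only point requiring momentary care is verifying that the functorial Fock-space lift $\widetilde{U}$ genuinely conjugates left creation operators to left creation operators, which follows at once from the defining formulas of $l_{h}$ on simple tensors. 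The present theorem is therefore a formal corollary of the apparatus already in place.
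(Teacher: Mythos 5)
Your proposal is correct and follows essentially the same route as the paper: identify $H$ with $\Bbb{C}^{\oplus N}$, invoke the anti-$*$-isomorphism $\Phi$ of (4.2.7) between $Toep(\Bbb{C}^{\oplus N})$ and $Toep^{R}(\Bbb{C}^{\oplus N})$, and apply the preceding corollary embedding $Toep^{R}(\Bbb{C}^{\oplus N})$ into $M_{N}$. Your explicit construction of the Fock-space lift $\widetilde{U}$ with $\widetilde{U}\,l_{h}\,\widetilde{U}^{*}=l_{Uh}$ is a small but welcome elaboration of a step the paper leaves implicit, and you correctly flag (as the paper tacitly does) that the final containment is only in the anti-$*$-isomorphic sense.
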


\begin{proof}
\strut By the above corollary, the right Toeplitz algebra $Toep^{R}\left( 
\Bbb{C}^{\oplus N}\right) $ over $\Bbb{C}^{\oplus N}$ is a $C^{*}$%
-subalgebra of $M_{N},$ for $N$ $\in $ $\Bbb{N}$ $\setminus $ $\{1\}.$ In
Section 4.2, we showed that, for any arbitrary Hilbert space $H,$ the right
Toeplitz algebra $Toep^{R}(H)$ and the Toeplitz algebra $Toep(H)$ are anti-$%
* $-isomorphic from each other.

If a Hilbert space $H$ has its dimension, $\dim H$ $=$ $N,$ then it is
Hilbert-space isomorphic to $\Bbb{C}^{\oplus N},$ and moreover, there exists
an anti-$*$-isomorphism

\begin{center}
$\Phi ^{-1}$ $:$ $Toep^{R}(H)$ $\rightarrow $ $Teop(H),$
\end{center}

where $\Phi $ is an anti-$*$-isomorphism defined in (4.2.7). Therefore, the
right Toeplitz algebra $Toep^{R}\left( \Bbb{C}^{\oplus N}\right) $ is anti-$%
* $-isomoprhic to the Toeplitz algebra $Toep(H)$ over $H.$ So, $Toep(H)$ is
a $C^{*}$-subalgebra of $M_{N}.$\strut
\end{proof}

\strut \strut \strut \strut \strut \strut 

\textbf{Acknowledgement} \emph{We have had enlightening discussions with
colleagues, Daniel Alpay, Keri Kornelson, Anna Paolucci, Erin Pearse,
Myung-Sin Song, Karen Shuman, Thomas Anderson, and Victor Vega}.

\strut

\strut \strut

\strut \textbf{References}\strut

\strut

{\small [1] \ \ A. Gibbons and L. Novak, Hybrid Graph Theory and Network
Analysis, ISBN: 0-521-46117-0, (1999) Cambridge Univ. Press.}

{\small [2]\strut \ \ \ \strut I. Cho, Fractals in Graphs, Verlag with Dr.
Muller Monographs, ISBN: 978-3-639-19447-0, (2009), VDM publisher.}

{\small [3] \ \ I. Cho, Hyponormality of Toeplitz Operators with
Trigonometric Polynomial Symbols, Master Degree Thesis, (1999) Sungkyunkwan
Univ.}

{\small [4] \ \ \strut I. Cho, Graph Groupoids and Partial Isometries, ISBN:
978-3-8383-1397-9, (2009) LAP Publisher.\strut }

{\small [5] \ \ \strut I. Cho, and P. E. T. Jorgensen, }$C^{*}${\small %
-Subalgebras Generated by Partial Isometries, JMP, DOI: 10.1063/1.3056588,
(2009).}

{\small [6] \ \ \strut I. Cho, and P. E. T. Jorgensen, }$C^{*}${\small %
-Subalgebras Generated by a Single Operator in }$B(H)${\small , ACTA Appl.
Math., 108, (2009) 625 - 664.}

{\small [7] \ \ \strut I. Cho, and P. E. T. Jorgensen, Application of
Automata and Graphs: Labeling Operators in Hilbert Space II, J. Math. Phy.,
DOI: 10.1063/1.3141524, (2009).}

{\small [8] \ \ I. Cho, and P. E. T. Jorgensen, Operators Induced by Graphs,
Preprint, (2010) Submitted to Central Euro. J. Math..}

{\small [9] \ \ \strut P. D. Mitchener, }$C^{*}${\small -Categories,
Groupoid Actions, Equivalent KK-Theory, and the Baum-Connes Conjecture,
arXiv:math.KT/0204291v1, (2005), Preprint.}

{\small [10] R. Gliman, V. Shpilrain and A. G. Myasnikov (editors),
Computational and Statistical Group Theory, Contemporary Math, 298, (2001)
AMS.}

{\small [11] F. Radulescu, Random Matrices, Amalgamated Free Products and
Subfactors of the $C^{*}$- Algebra of a Free Group, of Noninteger Index,
Invent. Math., 115, (1994) 347 - 389.}

{\small [12] P. R. Halmos. Hilbert Space Problem Book (2-nd Ed), ISBN:
0-387-90685-1, (1982) Springer-Verlag.}

{\small [13] T. Yosino, Introduction to Operator Theory, ISBN:
0-582-23743-2, (1993) Longman Sci. \& Tech.}

{\small [14] P. E. T. Jorgensen, Essential Self-Adjointness of the
Graph-Laplacian, J. Math. Phy., 49, no. 7, (2008) 073510, 33.}

{\small [15] P. E. T. Jorgensen, E. J. Pearse, Operator Theory of Electrical
Resistance Networks, Book Manuscript, http://arxive.org/abs/0806.3881, (2008)%
}

{\small [16] C. T. Zamfirescu, An Infinite Family of Planar non-Hamiltonian
Bihomogeneousely Traceable Oriented Graphs, Graphs Combin., 26, no. 1,
(2010) 141 - 146.}

{\small [17] P. Dorbec, and S. Gravier, Paired-Domination in Subdivided
Star-Free Graphs, Graphs Combin., 26, no. 1, (2010) 43 - 49.}

{\small [18] S-C Chang, and R. Shrock, Weighted Graph Colorings, J. Stat.
Phys., 138, no. 1-3, (2010) 496 - 542.}

{\small [19] M. Ferrara, M. S. Jacobson, and A. Harris, Cycle Lengths in
Hamiltonian Graphs with a Pair of Vertices Having Large Degree Sum, Graphs.
Combin., 26, no. 2, (2010) 215 - 223.}

{\small [20] I. Raebun, M. Tomforde, and D. P. Williams, Classification
Theorems for the }$C^{*}${\small -Algebras of Graphs with Sinks, Bull.
Austral. Math. Soc., 70, no. 1, (2004) 143 - 161.}

{\small [21] N. J. Fowler, and I. Raeburn, The Toeplitz Algebra of a Hilbert
Bimodule, Indiana Univ. Math. J., 48, no. 1, (1999) 155 - 181.}

{\small [22] A. Kumjian, D. Pask, and I. Raeburn, Cuntz-Frieger Algebras of
Directed Graphs, Pacific J. Math., 184, no. 1, (1998) 161 - 174.}

{\small [23] R. R. Coifman, Y. Shkolnisky, and F. J. Sigworth, Graph
Laplacian Tomography From Unknown Random Projections, IEEE Trans. Image
Process., 17, no. 10, (2008) 1891 - 1899.}

{\small [24] R. D. Lazarov, and S. D. Margenov, CBS Constants for Multilevel
Splitting of Graph Laplacian and Application to Preconditioning of
Discontinuous Systems, J. Complexity, 23, (2007) 4 - 6.}

{\small [25] M. Hein, J-Y Audibert, and U. von Luxberg, Graph Laplacians and
Their Convergence on Random Neighborhood Graphs, J. Mach. Learn. Res., 8,
(2007) 1325 - 1368.}

{\small [26] W. F. Stinespring, Positive Functions on }$C^{*}${\small %
-Algebras, Proc. Amer. Math. Soc., 6, (1955) 211 - 216.}

{\small [27] F. J. Murray, and J. von Neumann, On Rings of Operators II,
Trans. Amer. Math. Soc., 41, no. 2, (1937) 208 - 248.}

{\small [28] N. D. Popova, Yu. S. Samoulenko, and A. V. Strelets, On Coxeter
Graph Related Configurations of Subspaces of a Hilbert Space, Modern
Analysis and Applications, The Mark Krein Contenary Conference, vol. 1:
Operator Theory and Related Topics, Oper. Theo. Adv. Appl., 190, (2009) 429
- 450.}

{\small [29] R. D. Mauldin, T. Szarek, and M. Urbanski, Graph Directed
Markov Systems on Hilbert Spaces, Math. Proc. Cambridge Philos. Soc., 147,
no. 2, (2009) 455 - 488.}

{\small [30] S. Salimi, Continuous-time Quantum Walks on Star Graphs, Ann.
Phy., 324, no. 6, (2009) 1185 - 1193.}

{\small [31] V. Kostrykin, J. Potthoff, and R. Schrader, Contraction
Semigroups on Metric Graphs, Analysis on Graphs and its Applications, Proc.
Sympos. Pure Math., vol. 77, Amer. Math. Soc., (2008) 423 - 458.}

{\small [32] P. Kuchment, Quantum Graphs: An Introduction and a Brief
Survey, Analysus on Graphs and its Applications, Proc. Sympos. Pure Math.,
vol. 77, Amer. Math. Soc., (2008) 291 - 312.}

{\small [33] K. R. Davidson, S. C. Power, and D. Yang, Atomic
Representations of Rank 2 Graph Algebras, J. Funct. Anal., 255, no. 4,
(2008) 819 - 853.}

{\small [34] M. Baker, and R. Rumely, Harmonic Analysis on Metrized Graphs,
Canad. J. Math., 59, no. 2, (2007) 225 - 275.}

{\small [35] R. V. Kadison, and J. R. Ringrose, Fundamentals of the Theory
of Operator Algebra I, Grad. Stud. Math., vol. 15, Amer. Math. Soc., (1997) }

{\small [36] W. Arveson, An Invitation to }$C^{*}${\small -Algebras, Grad.
Texts. Math., no. 39, (1976) Springer-Verlag.}

\end{document}